\newtheorem{theorem}{Theorem}[section]
\newtheorem{corollary}[theorem]{Corollary}
\newtheorem{lemma}[theorem]{Lemma}
\newtheorem{proposition}[theorem]{Proposition}
\theoremstyle{definition}
\newtheorem*{definition}{Definition}
\newtheorem*{terminology}{Terminology}
\newtheorem{example}[theorem]{Example}
\newtheorem{remark}[theorem]{Remark}
\newcommand{\Z}{\mathds{Z}}
\newcommand{\R}{\mathds{R}}
\newcommand{\C}{\mathds{C}}
\newcommand{\id}{\mathit{id}}
\newcommand{\sign}{\mathit{sign}}
\newcommand{\eps}{\varepsilon}
\DeclareSymbolFont{EulerScript}{U}{eus}{m}{n}
\DeclareSymbolFontAlphabet\mathscr{EulerScript}
\begin{document}

\title{Colored tangles and signatures}

\author{David Cimasoni}
\address{Universit\'e de Gen\`eve, Section de math\'ematiques, 2-4 rue du Li\`evre, 1211 Gen\`eve 4, Switzerland}
\email{david.cimasoni@unige.ch}
\author{Anthony Conway}
\address{Universit\'e de Gen\`eve, Section de math\'ematiques, 2-4 rue du Li\`evre, 1211 Gen\`eve 4, Switzerland}
\email{anthony.conway@unige.ch}

\subjclass[2000]{57M25} 

\dedicatory{This paper is dedicated to the memory of Ruty Ben-Zion}

\begin{abstract}
Taking the signature of the closure of a braid defines a map from the braid group to the integers. In 2005, Gambaudo and Ghys expressed the homomorphism defect of this map
in terms of the Meyer cocycle and the Burau representation. In the present paper, we simultaneously extend this result in two directions, considering the multivariable signature of the closure of a colored tangle. The corresponding defect is expressed in terms of the Maslov index and of the Lagrangian functor defined by Turaev and the first-named author.
\end{abstract}

\maketitle


\section{Introduction}
\label{sec:intro}

Consider an arbitrary link invariant~$\mathcal{I}$ taking values in an abelian group. Precomposing this invariant with the braid closure defines maps~$\alpha\mapsto\mathcal{I}(\widehat{\alpha})$ from the braid groups~$B_n$ to this abelian group, and one might wonder whether these maps are group homomorphisms. In other words, one can ask whether
\[
\mathcal{I}(\widehat{\alpha\beta})-\mathcal{I}(\widehat{\alpha})-\mathcal{I}(\widehat{\beta})
\]
vanishes for all~$\alpha,\beta\in B_n$. This question has an easy answer: the only invariant with this property is the trivial one.
However, one can ask the more refined question of ``evaluating'' the homomorphism defect displayed above.
This can yield interesting consequences, both from the theoretical viewpoint, if this defect is expressed in terms of {\em a priori\/}
unrelated objects, and from the practical viewpoint, as it reduces the computation of the invariant to the computation of this defect (together with the value of~$\mathcal{I}$ on the closure of the standard generators of the braid group, i.e. unlinks).

This program was carried out by Gambaudo and Ghys~(\cite{GG}, recently republished in~\cite{Ensaios}) in the case of the Levine-Tristram signature~\cite{Mur,Lev,Tri}. This classical invariant
associates to an oriented link~$L$ an integral-valued map
\[
\sign(L)\colon S^1\to\Z, \quad\omega\mapsto\sign_\omega(L)\,.
\]
The great success of Gambaudo and Ghys was to express the homomorphism defect of this signature in terms of another classical object, the (reduced) Burau representation~\cite{Bur,Bir}
\[
\mathcal{B}_t \colon B_n\to\mathit{GL}_{n-1}(\Z[t,t^{-1}])\,.
\]
More precisely, it is known that this representation is unitary with respect to some skew-Hermitian form~\cite{Squier}. Therefore, given two braids~$\alpha,\beta\in B_n$ and a root of unity~$\omega$, one can consider the {\em Meyer cocycle\/} of the two unitary matrices~$\mathcal{B}_{\omega}(\alpha)$ and~$\mathcal{B}_{\omega}(\beta)$. The main theorem of~\cite{GG} is the equality
\begin{equation}
\label{equ:GG}
\sign_\omega(\widehat{\alpha\beta})-\sign_\omega(\widehat{\alpha})-\sign_\omega(\widehat{\beta})=-\mathit{Meyer}(\mathcal{B}_\omega(\alpha),\mathcal{B}_\omega(\beta))
\end{equation}
for all~$\alpha,\beta\in B_n$ and~$\omega\in S^1$ of order coprime to~$n$. (These authors actually work with the braid group on infinitely many strands~$B_\infty$, and obtain an equality valid for any~$\omega$ of finite order; however, their proof does yield the finer result stated above.)
Let us mention that this equality not only
relates two very much studied objects in knot theory, but also gives a very efficient algorithm for the
computation of the signature, as the Meyer cocycle is easy to calculate (and the signature of unlinks vanishes).

\medskip

Recall that the Levine-Tristram signature admits a generalization, the so-called {\em multivariable signature\/}~\cite{Gil,Cooper,Flo,CF}, which associates to a~$\mu$-colored
link~$L$ a map
\[
\sign(L)\colon\mathbb{T}^\mu\to\Z, \quad\omega=(\omega_1,\dots,\omega_\mu)\mapsto\sign_\omega(L)
\]
on the~$\mu$-dimensional torus~$\mathbb{T}^\mu$ (see subsection~\ref{sub:sign} below). The Burau representation has a
multivariable extension as well, called the {\em Gassner representation\/}~\cite{Bir}, which is unitary~\cite{Abd}, and it is natural to wonder if~(\ref{equ:GG})
holds in this multivariable setting.

Also, braids are but a very special kind of {\em tangles\/}, whose precise definition will be given in subsection~\ref{sub:tangle}. Oriented tangles no longer form groups, but are the morphisms of a category. Furthermore, the tangles that are endomorphims of a given object of this category can not only be composed, but also closed up to give oriented links, just like braids. Therefore, it makes sense to ask the same question as above, i.e. try to evaluate the defect of additivity of the signature on
tangles. It turns out that the Burau representation admits an extension to tangles~\cite{CT}, which takes the form of a functor~$\mathcal{F}$ from the category of oriented tangles to some {\em Lagrangian category\/} over the ring~$\Z[t,t^{-1}]$. It extends the Burau representation in the sense that if the tangle is a braid~$\alpha$, then~$\mathcal{F}(\alpha)$ is the graph of the unitary automorphism~$\mathcal{B}_t(\alpha)$ (see subsection~\ref{sub:functor} below). One cannot consider the Meyer cocycle of (pairs of) objects in this Lagrangian category, but it makes sense to consider the {\em Maslov index\/} of three objects in this category, evaluated at some~$t=\omega\in S^1$ (see subsection~\ref{sub:MM}). Therefore, one can ask whether the additivity defect of the signature of tangles
is related to the Maslov index of the image by~$\mathcal{F}$ of these tangles, evaluated at~$t=\omega$.

In the present paper, we answer both these questions simultaneously. The precise statement will be given in Theorem~\ref{thm:main} and Theorem~\ref{thm:relating} below, but in a nutshell, it can be phrased as follows.

\begin{theorem}
Given an object~$c$ of the category of~$\mu$-colored tangles and two endomorphisms~$\tau_1,\tau_2$ of this object, the equality
\[
\sign_\omega(\widehat{\tau_1\tau_2})-\sign_\omega(\widehat{\tau_1})-\sign_\omega(\widehat{\tau_2})=\mathit{Maslov}(\mathcal{F}_\omega(\overline{\tau}_1),\mathcal{F}_\omega(\mathit{id}_c),\mathcal{F}_\omega(\tau_2))
\]
holds for all~$\omega=(\omega_1,\dots,\omega_\mu)$ in an open dense subset of~$\mathbb{T}^\mu$, where~$\overline\tau$
denotes the horizontal reflection of the tangle~$\tau$, and~$\mathcal{F}_\omega$ is the evaluation at~$t=\omega$ of the multivariable extension of the Lagrangian functor~$\mathcal{F}$.
\end{theorem}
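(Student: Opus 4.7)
The plan is to reduce the theorem to Wall's non-additivity formula for signatures, applied to twisted intersection forms on C-complexes (or equivalently on 4-manifolds constructed from them). Recall that the multivariable signature $\sign_\omega(L)$ of a $\mu$-colored link $L$ admits a description (due to Cimasoni--Florens) as the signature of a Hermitian form on $H_1(F;\C)$ twisted by $\omega$, where $F$ is any C-complex bounding $L$; alternatively, it equals the equivariant signature of the double branched cover, or the signature of a 4-manifold built by plumbing on $F$. Either viewpoint produces a 4-dimensional object $W(F)$ with $\partial W(F) = L$ and $\sign_\omega(L) = \sigma(W(F))$.

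\textbf{Step 1: cut-and-paste for C-complexes.} Given two $\mu$-colored tangles $\tau_1,\tau_2$ which are endomorphisms of a common object $c$, I would construct C-complexes $F_1,F_2$ in thickened disks for $\tau_1,\tau_2$ that restrict, near the top of $\tau_1$ and the bottom of $\tau_2$ (i.e.\ on $c$), to a fixed ``standard'' C-complex $F_c$ for the trivial tangle $\id_c$. Composing the tangles and taking closures produces C-complexes $F$, $F^{(1)}$, $F^{(2)}$ for $\widehat{\tau_1\tau_2}$, $\widehat{\tau_1}$, $\widehat{\tau_2}$ respectively, all glued from the pieces $F_1,F_2,F_c$ (with appropriate closure bands). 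The associated 4-manifolds $W=W(F)$, $W^{(1)}=W(F^{(1)})$, $W^{(2)}=W(F^{(2)})$ share a common codimension-0 piece over $F_c$ along their boundaries.

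\textbf{Step 2: apply Wall's non-additivity.} The key input is Wall's theorem: when one assembles a 4-manifold from two pieces along part of their boundary, the signature defect is the Maslov index of three Lagrangians in the symplectic vector space $V$ obtained from the homology of the dividing 3-manifold. In our situation, after twisting by $\omega$, the dividing piece is built out of the thickening of $F_c$, and the twisted homology $V_\omega$ is a symplectic $\C$-vector space whose Lagrangians are precisely the objects of the Lagrangian category at the object $c$. Up to a careful bookkeeping of orientations (which is the source of the horizontal reflection $\overline{\tau}_1$ appearing on the first Lagrangian: the tangle $\tau_1$ enters the closure of $\widehat{\tau_1\tau_2}$ as a cobordism oriented opposite to its role in $\widehat{\tau_1}$), Wall's formula yields
\[
\sigma(W)-\sigma(W^{(1)})-\sigma(W^{(2)}) \;=\; \mathit{Maslov}(L_1,L_0,L_2),
\]
where $L_1,L_0,L_2$ are Lagrangians in $V_\omega$ associated respectively to the three pieces $\overline{\tau}_1$, $\id_c$, $\tau_2$.

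\textbf{Step 3: identify the Lagrangians with $\mathcal{F}_\omega$.} By construction of the Lagrangian functor $\mathcal{F}$ in~\cite{CT}, the Lagrangian $\mathcal{F}(\tau)$ attached to a tangle $\tau\colon c\to c$ is the image in the symplectic homology of the boundary of the twisted first homology of the tangle exterior (i.e.\ the kernel/image arising from inclusion of the dividing surface into the tangle complement). This is exactly the Lagrangian that Wall's formula assigns to each piece of the gluing. Hence $L_1=\mathcal{F}_\omega(\overline{\tau}_1)$, $L_0=\mathcal{F}_\omega(\id_c)$, $L_2=\mathcal{F}_\omega(\tau_2)$, yielding the claimed equality on the open dense subset of $\mathbb{T}^\mu$ where the relevant twisted chain complexes have generic rank (so that $\mathcal{F}_\omega$ is defined and $V_\omega$ is non-degenerate); this genericity condition is the analogue of the ``order coprime to $n$'' hypothesis of Gambaudo--Ghys.

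\textbf{Main obstacle.} The technical heart is Step 3: matching the symplectic-topological Lagrangians arising from Wall's formula with those defined functorially by $\mathcal{F}$. This demands a precise compatibility statement between the twisted intersection pairing on a C-complex and the symplectic form on the boundary used to define $\mathcal{F}$, as well as careful tracking of orientation reversals and of the horizontal reflection that accounts for the sign conventions (the same mechanism that, in the braid case, turns the Meyer cocycle of $(\mathcal{B}_\omega(\alpha),\mathcal{B}_\omega(\beta))$ into the Maslov index of the graphs of $\mathcal{B}_\omega(\alpha)^{-1}$, $\id$, $\mathcal{B}_\omega(\beta)$). A separate, more bookkeeping obstacle is verifying that our specific choice of compatible C-complexes can be made (this requires extending Seifert's algorithm colorwise to tangles) and that varying $\omega$ within the open dense subset does not change the integers involved.
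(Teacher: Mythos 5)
Your toolbox is the right one (Wall non-additivity, branched covers/4-manifolds computing the $\omega$-signature, identification of the resulting Lagrangians with the functor $\mathcal{F}_\omega$), but the central construction is missing and Step~2 as written does not work. Wall's theorem computes the signature of \emph{one} 4-manifold split into \emph{two} pieces along a properly embedded 3-manifold; it does not directly produce the three-term alternating sum $\sigma(W)-\sigma(W^{(1)})-\sigma(W^{(2)})$ for three separate 4-manifolds that merely ``share a common piece.'' To make the three closures $\widehat{\tau_1\tau_2}$, $\widehat{\tau_1}$, $\widehat{\tau_2}$ appear as boundary components of a single manifold, the paper builds a branched $G$-cover $P_G(\tau_1,\tau_2)$ of $P\times D^2$, where $P$ is a pair of pants carrying copies of $\tau_1$ and $\tau_2$ over two bands; cutting $P$ along an arc is what produces exactly three Lagrangians (for $\overline{\tau}_1$, $\mathit{id}_c$, $\tau_2$) in a single application of Wall, and is also where the horizontal reflection genuinely comes from. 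One then glues $P_G(\tau_1,\tau_2)$, three ``cap'' manifolds $C_G(\tau)$ realizing the signatures of the closures, and a second pair-of-pants piece $P_G(\mathit{id}_c,\mathit{id}_c)$ into a closed-up manifold $M_G(\tau_1,\tau_2)$ over $\Sigma_2\times D^2$. Even then one is not done: the construction only shows that $\sigma_\omega(M_G(\tau_1,\tau_2))$ equals the defect minus the Maslov index, and a separate argument (cutting $\Sigma_2$ along a carefully chosen curve meeting the branch set in $n$ parallel circles, then specializing to $\tau_2=\mathit{id}_c$) is needed to show this signature vanishes. Your proposal has no analogue of either the pair-of-pants manifold or this final vanishing step.

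A second structural gap: all of the auxiliary gluings (along tori, along the solid-torus fillings, along the preimage of $\tfrac12 S^3$, etc.) contribute Novikov--Wall correction terms that the paper does \emph{not} show vanish exactly --- it only shows they are uniformly bounded, and then upgrades ``equality up to a bounded constant'' to exact equality by an amplification argument (doubling the tangles via disjoint union, using additivity of both sides of the formula, so that a nonzero defect would grow like $2^m$). Without this reduction your bookkeeping of orientation and correction terms would have to be exact everywhere, which is not achievable with the gluings involved. Finally, the open dense subset is not a genericity-of-rank condition: it is the arithmetic condition that the orders $k_i$ of the $\omega_i$ be pairwise coprime and coprime to the winding numbers $\ell(c)_i$, which is what guarantees (via the Chinese remainder theorem) that the branched covers of the punctured disks have connected boundary, hence that the relevant intersection forms are non-degenerate and the kernels are honest Lagrangians.
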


In the case of colored braids, this functor gives back the graph of the Gassner representation, the horizontal reflection of a braid is its inverse, and the Maslov index of the graphs of unitary automorphisms~$\gamma^{-1}_1,\mathit{id}$ and~$\gamma_2$ is one possible definition of the Meyer cocycle of~$\gamma_1$ and~$\gamma_2$. Therefore, in the case of~$\mu$-colored braids, our theorem is exactly the expected multivariable extension of (\ref{equ:GG}). 

We would like to point out that, although our demonstration roughly follows the same lines as the original proof of Gambaudo and Ghys,
several clarifications are made along the way. Actually, the paper~\cite{GG} contains a very detailed proof
in the case~$\omega=-1$, but only a brief description of the necessary modifications needed for the case of~$\omega$ a root
of unity. Therefore, it is our hope that the present paper will be of use not only to those interested in the full generality of our main result
(Theorem~\ref{thm:main}), but also to those merely curious about oriented tangles (Corollary~\ref{cor:oriented}), colored braids (Corollary~\ref{cor:braid}),
or a new algorithm for the computation of multivariable signatures (Remark~\ref{rem:alg}).

\medskip

The article is organized as follows. Section~\ref{sec:def} contains the definitions of the main objects involved, as well as the
statement of our theorem, of corollaries, and several examples. The necessary preliminaries are gathered in Section~\ref{sec:prelim}. The proof of the main 
theorem is given in Section~\ref{sec:proof}. Finally, Section~\ref{sec:rel} relates the functor~$\mathcal{F}_\omega$ to the Lagrangian functor~$\mathcal{F}$.

\subsection*{Acknowledgments.} The authors wish to express their thanks to Stefan Friedl and Maxime Bourrigan for useful discussions. The second-named author also thanks J\'er\'emy Dubout. Part of this paper was written while the first-named author was invited by the {\em Universit\'e Pierre et Marie Curie\/}, whose hospitality is thankfully acknowledged. This work was supported by a grant of the Swiss National Science Foundation.


\section{Definitions, statement of the theorem, and examples}
\label{sec:def}

The first aim of this section is to give precise definitions of the objects involved in this article: colored tangles, multivariable signatures,
isotropic and Lagrangian categories, the Maslov index and Meyer cocycle, and the isotropic functor are introduced in subsections~\ref{sub:tangle} to~\ref{sub:functor}. Our main result is then stated in subsection~\ref{sub:thm}, where several corollaries and examples are also given.

\subsection{Colored tangles and colored braids}
\label{sub:tangle}

Let~$D^2$ be the closed unit disk in~$\R^2$. Given a positive integer~$n$, let~$x_j$ be the point~$((2j-n-1)/n,0)$
in~$D^2$, for~$j=1,\dots,n$. Let~$\eps$ and~$\eps'$ be sequences of~$\pm 1$'s of respective length~$n$ and~$n'$.
An~{\em $(\eps,\eps')$-tangle} is the pair consisting of the cylinder~$D^2\times [0,1]$ and an oriented piecewise linear~$1$-submanifold~$\tau$ whose oriented boundary is~$\sum_{j=1}^{n'}\eps'_j(x'_j,1)-\sum_{j=1}^{n}\eps_i(x_j,0)$.

An oriented tangle~$\tau$ is called~{\em $\mu$-colored\/} if each of its components is assigned an element in~$\{1,\dots,\mu\}$.
We shall call a~$\mu$-colored~$(\eps,\eps')$-tangle a {\em $(c,c')$-tangle\/}, where~$c$ and~$c'$ are the sequences
of~$\pm 1,\pm 2,\dots,\pm\mu$ induced by the orientation and coloring of the tangle.
Given such a sequence~$c$, we shall denote by~$\ell(c)$ the element in~$\Z^\mu$ whose~$i^{\mathit{th}}$ coordinate is equal
to~$\ell(c)_i=\sum_{j;c_j=\pm i}\eps_j$. Note that for a~$(c,c')$-tangle to exist, we must have~$\ell(c)=\ell(c')$.

Two~$(c,c')$-tangles~$(D^2\times [0,1],\tau_1)$ and~$(D^2\times[0,1],\tau_2)$ are {\em isotopic\/} if there exists an
auto-homeomorphism~$h$ of~$D^2\times [0,1]$, keeping~$D^2\times\{0,1\}$ fixed, such that~$h(\tau_1)=\tau_2$ and~$h\vert_{\tau_1}\colon\tau_1\simeq\tau_2$ is orientation and color-preserving. We shall denote by~$T_\mu(c,c')$ the set of isotopy classes of~$(c,c')$-tangles, and by~$id_c$ the isotopy class of the trivial~$(c,c)$-tangle~$(D^2,\{x_1,\dots,x_n\})\times [0,1]$. 

Given a~$(c,c')$-tangle~$\tau_1$ and a~$(c',c'')$-tangle~$\tau_2$, their {\em composition\/} is the~$(c,c'')$-tangle~$\tau_2\tau_1$ obtained by gluing the two cylinders along the disk corresponding to~$c'$ and shrinking the length of the resulting cylinder by a factor~$2$ (see Figure \ref{fig:tangle}). Clearly, the composition of tangles induces a
composition~$T_\mu(c,c')\times T_\mu(c',c'')\to T_\mu(c,c'')$ on the isotopy classes of~$\mu$-colored tangles.

\begin{figure}[tb]
\labellist\small\hair 2.5pt
\pinlabel {$\tau_1$} at 20 600
\pinlabel {$\tau_2$} at 20 745
\pinlabel {$\tau_2\tau_1$} at 390 670
\pinlabel {$c$} at 72 548
\pinlabel {$c'$} at 72 690
\pinlabel {$c'$} at 72 655
\pinlabel {$c''$} at 72 799
\pinlabel {$c$} at 330 618
\pinlabel {$c''$} at 330 723
\endlabellist
\centering
\includegraphics[width=0.6\textwidth]{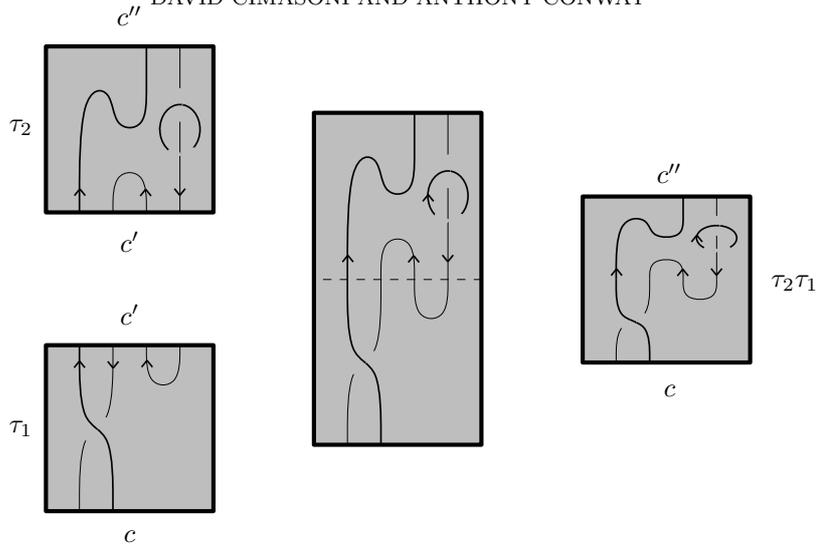}
\caption{A~$(c,c')$-tangle~$\tau_1$ with~$c=(-1,+2)$ and~$c'=(+2,-1,+1,-1)$, a~$(c',c'')$-tangle~$\tau_2$ with~$c''=(+2,-1)$, and their composition,
the~$(c,c'')$-tangle~$\tau_2\tau_1$.}
\label{fig:tangle}
\end{figure}

The category~$\mathbf{Tangles}_\mu$ of~$\mu$-colored tangles is defined as follows: the objects are the finite sequences~$c$ of elements in~$\{\pm 1,\pm 2,\dots,\pm\mu\}$, and the morphisms are given by~$\hbox{Hom}(c,c')=T_\mu(c,c')$. The composition is clearly associative, and the trivial tangle~$id_c$ plays the role of the identity endomorphism of~$c$.

We need to define some additional operations on tangles.
Given a~$(c,c')$-tangle~$\tau$, let us denote by~$\overline{\tau}$ the~$(c',c)$-tangle obtained from~$\tau$ by a reflection with respect to the
horizontal disk~$D^2\times\{1/2\}$.
Also, note that the category~$\mathbf{Tangles}_\mu$ is endowed with a monoidal structure given by the juxtaposition~$\tau_1\sqcup\tau_2$ of colored tangles. Finally, given an endomorphism~$\tau\in T_\mu(c,c)$, one can define its {\em closure\/} as the~$\mu$-colored link~$\widehat{\tau}$ obtained from~$\tau$ by adding oriented
colored parallel strands in~$S^3\setminus(D^2\times[0,1])$. All of these operations are illustrated in Figure~\ref{fig:tangle2}.

A~$(c,c')$-tangle~$\tau\subset D^2\times[0,1]$ is called a {\em colored braid\/} if every component is strictly increasing or strictly
decreasing with respect to the projection to~$[0,1]$. Finite sequences of elements in~$\{\pm 1,\pm 2,\dots,\pm\mu\}$, as objects,
and isotopy classes of colored braids, as morphisms, form a subcategory
\[
\mathbf{Braids}_\mu\subset\mathbf{Tangles}_\mu\,.
\]
In the case~$\mu=1$, these are simply the categories of oriented braids and oriented tangles.

For any sequence~$c$ of elements in~$\{\pm 1,\pm 2,\dots,\pm\mu\}$, the set~$B_c$ of endomorphisms
of~$c$ in~$\mathbf{Braids}_\mu$ is a group, with the inverse of~$\alpha\in B_c$ given by~$\overline{\alpha}=\alpha^{-1}$. If~$c=(1,\dots,1)$, then this group is nothing but the classical {\em braid group\/}~$B_n$.
If~$c=(1,2,\dots,n)$, then it is the {\em pure braid group\/}~$P_n$. For the purpose of this article, it is useful to interpolate
between and extend these two extreme cases, hence the natural notion of colored braids introduced in this paragraph.

\begin{figure}[tb]
\labellist\small\hair 2.5pt
\pinlabel {$\tau$} at 60 737
\pinlabel {$\overline{\tau}$} at 60 671
\pinlabel {$\tau_1$} at 210 737
\pinlabel {$\tau_2$} at 357 737
\pinlabel {$\tau_1\sqcup\tau_2$} at 280 671
\pinlabel {$\tau$} at 532 737
\pinlabel {$\widehat{\tau}$} at 590 613
\endlabellist
\includegraphics[width=0.9\textwidth]{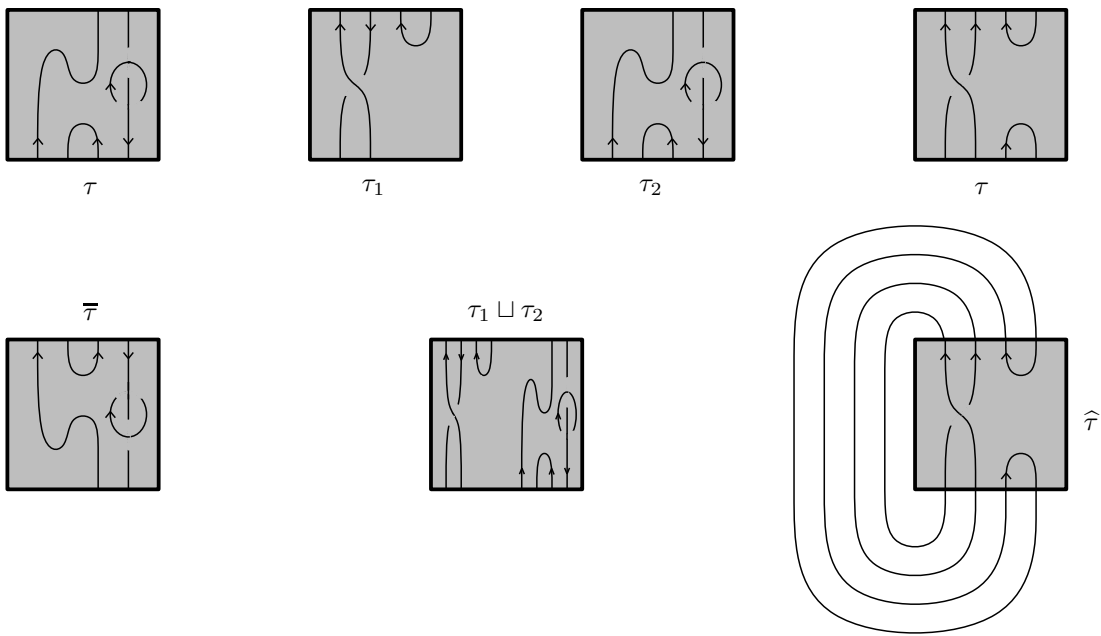}
\caption{From left to right: a tangle~$\tau$ and its reflection~$\overline{\tau}$, two tangles~$\tau_1,\tau_2$ and their juxtaposition~$\tau_1\sqcup\tau_2$, a tangle~$\tau$ and its closure~$\widehat{\tau}$.}
\label{fig:tangle2}
\end{figure}

\subsection{Multivariable signatures}
\label{sub:sign}

Recall that the {\em Levine-Tristram signature\/} of an oriented link~$L$ is the map
\[
\sign(L)\colon S^1\to\Z,\quad\omega\mapsto\sign_\omega(L)\,,
\]
where~$\sign_\omega(L)$ is the signature of the Hermitian matrix~$H(\omega)=(1-\omega)A+(1-\overline\omega)A^T$ and~$A$
is any Seifert matrix for the oriented link~$L$. In~\cite{CF}, this invariant was extended to arbitrary colored links; let us briefly recall this definition,
referring to~\cite{CF} for details, and to~\cite{Cooper,Gil,Flo} for previous related constructions.

A {\em~$C$-complex\/}~\cite{Cooper} for a~$\mu$-colored link~$L$ is a union~$S=S_1\cup\dots\cup S_\mu$ of surfaces in~$S^3$ such that:
\begin{enumerate}[(i)]
\item{for all~$i$,~$S_i$ is a  Seifert surface for the sublink of~$L$ of color~$i$;}
\item{for all~$i\neq j$,~$S_i\cap S_j$ is either empty or a union of clasps (see Figure \ref{fig:clasp});}
\item{for all~$i,j,k$ pairwise distinct,~$S_i\cap S_j\cap S_k$ is empty.}
\end{enumerate}

\begin{figure}[htb]
\labellist\small\hair 2.5pt
\pinlabel {$x$} at 18 58
\pinlabel {$S_i$} at 94 110
\pinlabel {$S_j$} at 331 118
\endlabellist
\centering
\includegraphics[width=0.4\textwidth]{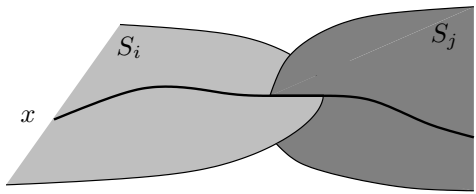}
\caption{A clasp intersection crossed by a~$1$-cycle~$x$.}
\label{fig:clasp}
\end{figure}

The existence of a~$C$-complex for an arbitrary colored link is fairly easy to establish, see~\cite[Lemma 1]{CimConway}.
Note that in the case~$\mu=1$, a~$C$-complex for~$L$ is nothing but a Seifert surface for the link~$L$.
Let us now define the corresponding generalization of the Seifert form.

Given a sequence~$\eps=(\eps_1,\dots,\eps_\mu)$ of~$\pm 1$'s, let~$i^\eps\colon H_1(S)\to H_1(S^3\setminus S)$ be defined as follows. Any homology classes in~$H_1(S)$ can be represented by an oriented cycle~$x$ which behaves as
illustrated in Figure~\ref{fig:clasp} whenever crossing a clasp. Then, define~$i^\eps([x])$ as
the class of the~$1$-cycle obtained by pushing~$x$ in the~$\eps_i$-normal direction off~$S_i$ for~$i=1,\dots,\mu$.
Finally, consider the bilinear form
\[
\alpha^\eps\colon H_1(S)\times H_1(S)\to\Z,\quad(x,y)\mapsto\ell k(i^\eps(x),y)\,,
\]
where~$\ell k$ denotes the linking number.
Fix a basis of~$H_1(S)$ and denote by~$A^\eps$ the matrix of~$\alpha^\eps$. Note that for all~$\eps$,~$A^{-\eps}$ is equal to~$(A^\eps)^T$. Using this fact, one easily checks that for any~$\omega=(\omega_1,\dots,\omega_\mu)$ in the~$\mu$-dimensional 
torus~$\mathbb{T}^\mu$, the matrix
\[
H(\omega)=\sum_\eps\prod_{i=1}^\mu(1-\overline{\omega}_i^{\eps_i})\,A^\eps
\]
is Hermitian.

\begin{definition}
The {\em multivariable signature} of the~$\mu$-colored link~$L$ is the function
\[
\sign(L)\colon\mathbb{T}^\mu\to\Z,\quad\omega\mapsto\sign_\omega(L)\,,
\]
where~$\sign_\omega(L)$ is the signature of the Hermitian matrix~$H(\omega)$.
\end{definition}

Note that in the case~$\mu=1$, one clearly gets back the Levine-Tristram signature.
This multivariable generalization turns out not only to be well-defined (i.e. independent of the choice of the~$C$-complex), but it satisfies
all the properties of the Levine-Tristram signature, generalized from oriented links to colored links. Most notably, it
is invariant under concordance of colored links (see~\cite[Section 7]{CF} for details, and~\cite{DFL} for a recent result on this invariant).

Let us compute a couple of easy examples.

\begin{example}
\label{ex:Hopf}
Consider the positive Hopf link~$\mathcal{H}$, as an oriented 1-colored link. A Seifert surface for~$\mathcal{H}$ is given by a Hopf
band, and the corresponding Seifert matrix is~$A=(-1)$. Therefore, the Levine-Tristram signature of the
positive Hopf link is equal to
\[
\sign_\omega(\mathcal{H})=\mathrm{sgn}(-(1-\omega)-(1-\overline{\omega}))=\mathrm{sgn}(-2\mathit{Re}(1-\omega))=-1
\]
for all~$\omega\in S^1\setminus\{1\}$.
On the other hand, consider the Hopf link as a 2-colored link. Clearly, it admits a contractible~$C$-complex, so its 2-variable signature
vanishes:
\[
\sign_{(\omega_1,\omega_2)}(\mathcal{H})=0\quad\text{for all $(\omega_1,\omega_2)\in\mathbb{T}^2$\,.}
\]
\end{example}

\begin{example}
\label{ex:4}
Consider the 2-colored link~$L$ depicted in the left-hand side of Figure~\ref{fig:ex4}. It admits the natural~$C$-complex illustrated in
the right-hand side of this same figure. An easy computation gives~$A^{++}=A^{--}=(-1)$ and~$A^{+-}=A^{-+}=(0)$, leading to
\[
\sign_{(\omega_1,\omega_2)}(L)=-\mathrm{sgn}\left(\mathit{Re}[(1-\omega_1)(1-\omega_2)]\right).
\]
\end{example}

\begin{figure}[tb]
\labellist\small\hair 2.5pt
\pinlabel {$S_1$} at 562 253
\pinlabel {$S_2$} at 715 253
\endlabellist
\centering
\includegraphics[width=0.5\textwidth]{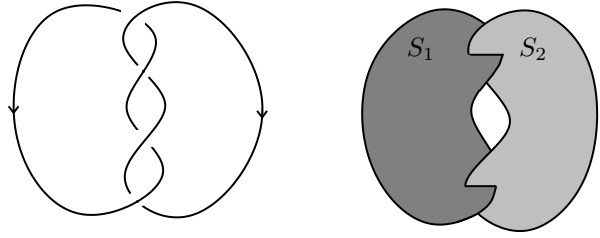}
\caption{The colored link of Example~\ref{ex:4} and a natural~$C$-complex for it.}
\label{fig:ex4}
\end{figure}

From the definition of this invariant, one immediately sees that it is additive under disjoint union.
This translates into the following statement, that we record here for further use.

\begin{lemma}
\label{lemma:sign}
Given any two closable~$\mu$-colored tangles~$\tau_1$ and~$\tau_2$, the equality
\[
\sign_\omega(\widehat{\tau_1\sqcup\tau_2})=\sign_\omega(\widehat{\tau_1})+\sign_\omega(\widehat{\tau_2})
\]
holds for all~$\omega\in\mathbb{T}^\mu$.
\end{lemma}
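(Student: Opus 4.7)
The plan is to reduce the claim to the standard additivity of the multivariable signature under split union of links, and then verify this additivity at the level of $C$-complexes.

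First, I would observe that the closure of a juxtaposition is a split link. More precisely, if $\tau_1$ and $\tau_2$ are closable $\mu$-colored tangles, the juxtaposition $\tau_1\sqcup\tau_2$ lives in a disk that is separated into two subdisks by a vertical line, and the closure arcs added in $S^3\setminus(D^2\times[0,1])$ can be chosen so as to close up each subtangle in its own ball. Consequently, the closures $\widehat{\tau_1}$ and $\widehat{\tau_2}$ sit in disjoint $3$-balls of~$S^3$, and
\[
\widehat{\tau_1\sqcup\tau_2}=\widehat{\tau_1}\sqcup\widehat{\tau_2}
\]
as~$\mu$-colored links, where the right-hand side is the split union.

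Next, I would prove the following sublemma: if~$L=L_1\sqcup L_2$ is a split~$\mu$-colored link, then~$\sign_\omega(L)=\sign_\omega(L_1)+\sign_\omega(L_2)$ for all~$\omega\in\mathbb{T}^\mu$. By the splitting, we may choose~$C$-complexes~$S^{(1)}$ and~$S^{(2)}$ for~$L_1$ and~$L_2$ respectively that are contained in disjoint~$3$-balls; their union~$S=S^{(1)}\cup S^{(2)}$ is then a~$C$-complex for~$L$. The homology splits as
\[
H_1(S)=H_1(S^{(1)})\oplus H_1(S^{(2)})\,,
\]
and since cycles supported in disjoint balls have vanishing mutual linking numbers, every generalized Seifert matrix~$A^\eps$ is block-diagonal with blocks~$A^\eps_1$ and~$A^\eps_2$ coming from~$S^{(1)}$ and~$S^{(2)}$. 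Hence the Hermitian matrix~$H(\omega)=\sum_\eps\prod_i(1-\overline{\omega}_i^{\eps_i})A^\eps$ is block-diagonal with blocks~$H_1(\omega)$ and~$H_2(\omega)$ corresponding to the two sublinks, and the signature is additive.

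Combining the two observations gives the statement of the lemma for every~$\omega\in\mathbb{T}^\mu$. No obstacles of real substance should appear: the only point requiring a little care is checking that the closure of a juxtaposition is genuinely split (rather than merely distant-isotopic to a split link), but this is immediate from the geometric description of the juxtaposition and of the closure operation.
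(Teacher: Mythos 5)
Your proposal is correct and follows exactly the route the paper has in mind: the paper gives no proof, remarking only that additivity under disjoint union is immediate from the definition, and your argument (closure of a juxtaposition is a split union, then block-diagonality of the matrices $A^\eps$ for a disjoint $C$-complex) is precisely the verification of that remark.
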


\medskip

We are now ready to state the goal of this article in more precise terms:

\noindent{\em Given two~$\mu$-colored tangles~$\tau_1,\tau_2\in T_\mu(c,c)$ and an element~$\omega=(\omega_1,\dots,\omega_\mu)\in\mathbb{T}^\mu$, evaluate the additivity defect
\[
\sign_\omega(\widehat{\tau_1\tau_2})-\sign_\omega(\widehat{\tau_1})-\sign_\omega(\widehat{\tau_2}).
\]}

This will be done using an isotropic functor, that will be introduced shortly.

\subsection{The isotropic and Lagrangian categories}
\label{sub:cat}

In this paragraph, we introduce the category~$\mathbf{Isotr}_\Lambda$ of isotropic relations over a ring~$\Lambda$.
This is a slight modification of the category~$\mathbf{Lagr}_\Lambda$ of Lagrangian relations~\cite{CT}, whose definition we also recall
for completeness.

\medbreak

Fix an integral domain~$\Lambda$ endowed with a ring involution~$\lambda \mapsto \overline{\lambda}$.
The main examples to keep in mind are~$\Lambda_\mu:=\Z[t_1^{\pm 1},\dots,t_\mu^{\pm 1}]$ endowed with the involution induced
by~$t_i\mapsto t_i^{-1}$, the field~$\C$ of complex numbers endowed with the complex conjugation, and the field~$\R$ of real
numbers endowed with the trivial involution, which is related to the classical theory of symplectic vector spaces~\cite{Turaev}.

A {\em skew-Hermitian form} on a~$\Lambda$-module~$H$ is a map~$\xi\colon H \times H\rightarrow\Lambda$ such that for
all~$x,y,z \in H$ and all~$\lambda,\lambda'\in\Lambda$,
\begin{enumerate}[(i)]
\item{$\xi(\lambda x + \lambda' y,z)=\lambda \xi(x,z)+\lambda' \xi(y,z),$}
\item{$\xi(x,y)=-\overline{\xi(y,x)}$.}
\end{enumerate}
A {\em Hermitian~$\Lambda$-module}~$H$ is a finitely generated~$\Lambda$-module endowed with a skew-Hermitian form~$\xi$. 
The same module~$H$ with the opposite form~$-\xi$ will be denoted by~$-H$.

Given a submodule~$V$ of a
Hermitian~$\Lambda$-module~$H$, its {\em annihilator} is the submodule
\[
\mathrm{Ann}(V)=\lbrace x \in H \ | \ \xi(v,x)=0 \text{ for all } v \in V \rbrace\,.
\]
A skew-Hermitian form~$\xi$ on~$H$ is {\em non-degenerate} if~$\mathrm{Ann}(H)=0$ and a Hermitian module whose form is non-degenerate will be called a {\em non-degenerate Hermitian module}. A submodule~$V$ of a Hermitian module~$H$ is
{\em isotropic} if~$V \subset \mathrm{Ann}(V)$ or, equivalently, if~$\xi$ vanishes identically on~$V$. A submodule of a Hermitian
module is {\em Lagrangian} if it is equal to its annihilator.

If~$H_1$ and~$H_2$ are Hermitian~$\Lambda$-modules, an {\em isotropic relation}~$N\colon H_1 \Rightarrow H_2$ is an isotropic submodule of~$(-H_1) \oplus H_2$.  For instance, given a Hermitian~$\Lambda$-module~$H$, the {\em diagonal relation}
\[
\Delta_H= \lbrace h \oplus h \in H \oplus H \rbrace
\]
is an isotropic relation~$H \Rightarrow H$. Given two isotropic relations~$N_1\colon H_1 \Rightarrow H_2$ and~$N_2\colon H_2 \Rightarrow H_3$, their {\em composition} is defined as~$N_2 \circ N_1:=N_2N_1\colon H_1 \Rightarrow H_3$ where~$N_2N_1$ denotes the following submodule of~$(-H_1) \oplus H_3$:
\[
N_2N_1 = \lbrace h_1 \oplus h_3 \ | \ h_1 \oplus h_2 \in N_1 \ \text{and} \ h_2 \oplus h_3 \in N_2 \ \text{for a certain} \ h_2 \in H_2 \rbrace\,.
\]
The proof of the following proposition is straightforward.

\begin{proposition}
\label{prop:Isotr}
Hermitian~$\Lambda$-modules, as objects, and isotropic relations as morphisms, form a category~$\mathbf{Isotr}_\Lambda$.
\end{proposition}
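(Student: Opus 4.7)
The plan is to verify the three category axioms one at a time, the only substantive step being the stability of isotropy under composition; the remaining verifications are formal.

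I would first treat the identities. The diagonal $\Delta_H \subset (-H) \oplus H$ is isotropic by direct computation: the form on $(-H) \oplus H$ evaluated on $(h \oplus h, h' \oplus h')$ gives $-\xi(h,h') + \xi(h,h') = 0$. To see that $\Delta_H$ acts as identity on morphisms, given an isotropic relation $N \colon H_1 \Rightarrow H_2$ one unpacks definitions: in $\Delta_{H_2} \circ N$, the diagonal forces the intermediate element to coincide with the output, so the composition reduces to $N$ itself, and symmetrically for $N \circ \Delta_{H_1}$.

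The heart of the proof is showing that the composition $N_2 N_1$ of isotropic relations $N_1 \colon H_1 \Rightarrow H_2$ and $N_2 \colon H_2 \Rightarrow H_3$ remains isotropic. For two elements $h_1 \oplus h_3$ and $h_1' \oplus h_3'$ of $N_2 N_1$, I would select witnesses $h_2, h_2' \in H_2$ with $h_1 \oplus h_2, h_1' \oplus h_2' \in N_1$ and $h_2 \oplus h_3, h_2' \oplus h_3' \in N_2$. Isotropy of $N_1$ inside $(-H_1) \oplus H_2$ yields $\xi_1(h_1, h_1') = \xi_2(h_2, h_2')$, while isotropy of $N_2$ inside $(-H_2) \oplus H_3$ yields $\xi_2(h_2, h_2') = \xi_3(h_3, h_3')$; chaining these gives $-\xi_1(h_1, h_1') + \xi_3(h_3, h_3') = 0$, which is precisely isotropy of $N_2 N_1$ in $(-H_1) \oplus H_3$. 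The submodule property is immediate from $\Lambda$-linearity of $N_1$ and $N_2$.

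Finally, associativity $(N_3 N_2) N_1 = N_3 (N_2 N_1)$ is a purely set-theoretic matter: both sides unfold to the set of those $h_1 \oplus h_4$ admitting intermediate witnesses $h_2 \in H_2$ and $h_3 \in H_3$ with $h_1 \oplus h_2 \in N_1$, $h_2 \oplus h_3 \in N_2$, $h_3 \oplus h_4 \in N_3$. No common-witness issue arises because the two inner compositions can share the same $h_2$ or $h_3$. The main (quite mild) obstacle throughout is keeping track of the sign convention on the direct sums $(-H) \oplus H'$; once this bookkeeping is set up cleanly, the whole verification is mechanical and requires no non-degeneracy or module-theoretic input beyond the definitions.
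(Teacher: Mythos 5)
Your verification is correct and is exactly the "straightforward" check the paper alludes to without writing out: isotropy of the diagonal, the chaining identity $\xi_1(h_1,h_1')=\xi_2(h_2,h_2')=\xi_3(h_3,h_3')$ for compositions, and set-theoretic associativity of relational composition (which, unlike the Lagrangian case, needs no closure operation). Nothing to add.
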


Given two Hermitian $\Lambda$-modules~$(H_1,\xi_1)$ and~$(H_2,\xi_2)$, a~$\Lambda$-linear map~$\gamma\colon H_1 \rightarrow H_2$ is said to be {\em unitary\/} if it satisfies~$\xi_2(\gamma(x),\gamma(y))=\xi_1(x,y)$ for all~$x,y \in H_1$. We will denote by~$\widetilde{\mathbf{U}}_\Lambda$ the category of Hermitian~$\Lambda$-modules
and unitary isomorphisms.

Isotropic relations should be understood as a generalization of unitary isomorphisms, in the following sense.
The {\em graph} of a linear map~$\gamma \colon H_1 \rightarrow H_2 $ is the subspace 
\[
\Gamma_{\gamma}= \lbrace v \oplus\gamma(v) \ | \ v \in H_1 \rbrace\subset H_1\oplus H_2\,.
\]
One easily checks that if~$\gamma$ is a unitary isomorphism, then~$\Gamma_\gamma$ is an isotropic
submodule of~$(-H_1)\oplus H_2$, that is, an isotropic relation~$H_1\Rightarrow H_2$. Furthermore, the graph of a composition
of isomorphisms is the composition of the corresponding isotropic relations. In other words, we have the following proposition.

\begin{proposition}
\label{prop:graph}
The map~$\gamma \mapsto \Gamma_\gamma$ defines an embedding of
categories~$\Gamma\colon\widetilde{\mathbf{U}}_\Lambda \hookrightarrow \mathbf{Isotr}_\Lambda.$
\end{proposition}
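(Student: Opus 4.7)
The plan is to verify three things: (i) $\Gamma_\gamma$ is an isotropic relation $H_1 \Rightarrow H_2$; (ii) the assignment respects composition and identities, hence is a functor; (iii) it is faithful and injective on objects, hence an embedding.

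For (i), I would take two elements $v \oplus \gamma(v)$ and $w \oplus \gamma(w)$ of $\Gamma_\gamma$, and compute the value of the skew-Hermitian form of $(-H_1) \oplus H_2$ on them. By definition this is
\[
-\xi_1(v,w) + \xi_2(\gamma(v),\gamma(w)),
\]
which vanishes precisely because $\gamma$ is unitary. Since $\Gamma_\gamma$ is clearly a $\Lambda$-submodule of $H_1 \oplus H_2$, this proves it is an isotropic relation in the sense of subsection~\ref{sub:cat}.

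For (ii), the verification is a direct unraveling of definitions. Given unitary $\gamma_1 \colon H_1 \to H_2$ and $\gamma_2 \colon H_2 \to H_3$, an element of $\Gamma_{\gamma_2}\Gamma_{\gamma_1}$ is of the form $h_1 \oplus h_3$ with $h_1 \oplus h_2 \in \Gamma_{\gamma_1}$ and $h_2 \oplus h_3 \in \Gamma_{\gamma_2}$; thus $h_2 = \gamma_1(h_1)$ and $h_3 = \gamma_2(h_2) = (\gamma_2 \circ \gamma_1)(h_1)$, yielding exactly $\Gamma_{\gamma_2 \circ \gamma_1}$. The reverse inclusion is immediate by taking $h_2 := \gamma_1(h_1)$. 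For identities, $\Gamma_{\mathrm{id}_H}$ is precisely the diagonal relation $\Delta_H$, which plays the role of the identity in $\mathbf{Isotr}_\Lambda$ (as a straightforward composition check shows).

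For (iii), the functor $\Gamma$ acts as the identity on objects (both categories have Hermitian $\Lambda$-modules as objects), so it suffices to check faithfulness: if $\Gamma_{\gamma} = \Gamma_{\gamma'}$ as subsets of $H_1 \oplus H_2$, then for every $v \in H_1$ the element $v \oplus \gamma(v)$ lies in $\Gamma_{\gamma'}$, forcing $\gamma(v) = \gamma'(v)$. I do not expect any genuine obstacle here; the content of the statement is essentially the compatibility between unitarity and isotropy established in step (i), and every other verification is a formal rearrangement of definitions.
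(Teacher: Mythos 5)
Your proposal is correct and matches the paper's approach: the paper itself only remarks that ``one easily checks'' that unitarity of~$\gamma$ makes~$\Gamma_\gamma$ isotropic in~$(-H_1)\oplus H_2$ and that graphs of compositions are compositions of graphs, which is exactly the computation you carry out. The additional verifications you record (identities going to diagonals, injectivity on objects, faithfulness) are the remaining formal checks and are handled correctly.
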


The corresponding theory for Lagrangian submodules is less straightforward. For this reason, we only recall its main features and refer to \cite[Section 2]{CT} for details and proofs.

Given of a submodule~$A$ of a Hermitian~$\Lambda$-module~$H$, set
\[
\overline{A}= \lbrace x \in H \ | \ \lambda x \in A \ \text{for a non-zero} \ \lambda \in \Lambda \rbrace\,.
\]
If~$H_1$ and~$H_2$ are non-degenerate Hermitian~$\Lambda$-modules, a {\em Lagrangian relation}~$N\colon H_1 \Rightarrow H_2$ is a Lagrangian submodule of~$(-H_1) \oplus H_2$. Given two Lagrangian relations~$N_1\colon H_1 \Rightarrow H_2$ and~$N_2\colon H_2 \Rightarrow H_3$, their {\em composition} is defined as~$N_2 \circ N_1 := \overline{N_2N_1}\colon H_1 \Rightarrow H_3$.
Finally, let~$\mathbf{U}_\Lambda$ be the category of non-degenerate Hermitian~$\Lambda$-modules and unitary isomorphisms.
The following statement is the main result of Section~2 of~\cite{CT}.

\begin{theorem}
\label{thm:Lagr}
Non-degenerate Hermitian~$\Lambda$-modules, as objects, and Lagrangian relations, as morphisms, form a
category~$\mathbf{Lagr}_\Lambda$, and the map~$\gamma \mapsto \Gamma_\gamma$ defines an embedding of categories~$\Gamma\colon\mathbf{U}_\Lambda\hookrightarrow\mathbf{Lagr}_\Lambda$.
\end{theorem}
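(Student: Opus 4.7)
The plan is to verify the category axioms for $\mathbf{Lagr}_\Lambda$ and the functoriality of $\Gamma$, taking care with the closure operation which is the only substantive difference from the isotropic case treated in Proposition~\ref{prop:Isotr}.

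First I would check that $\overline{N_2 N_1}$ is Lagrangian whenever $N_1, N_2$ are. The isotropy of $N_2 N_1$ follows exactly as in Proposition~\ref{prop:Isotr}, and a direct computation shows that if a submodule $A \subset H$ is isotropic then so is $\overline{A}$ (if $\lambda_1 x, \lambda_2 y \in A$ with $\lambda_i \neq 0$, then $\xi(x,y) = (\lambda_1 \overline{\lambda_2})^{-1}\xi(\lambda_1 x,\lambda_2 y) = 0$ after tensoring with the field of fractions; since $\Lambda$ is an integral domain and $\xi(x,y)\in\Lambda$, this forces $\xi(x,y)=0$). The harder half of $\overline{N_2N_1}=\mathrm{Ann}(\overline{N_2N_1})$ is the reverse inclusion; this is where non-degeneracy of $H_1$ and $H_3$ enters, via a standard diagram chase: if $h_1\oplus h_3$ annihilates $\overline{N_2 N_1}$ in $(-H_1)\oplus H_3$, then using the non-degenerate pairings one extracts an intermediate $h_2\in H_2$ witnessing membership in the closure.

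Next I would verify associativity. The obstacle is that both $(N_3\circ N_2)\circ N_1=\overline{\overline{N_3N_2}N_1}$ and $N_3\circ(N_2\circ N_1)=\overline{N_3\overline{N_2N_1}}$ involve nested closures. The approach is to show that each side agrees with $\overline{N_3N_2N_1}$, using the fact that taking the closure and then composing (on either side) produces the same closed submodule as composing first and then closing. Identity is then straightforward: $\Delta_H$ is Lagrangian in $(-H)\oplus H$ precisely when $H$ is non-degenerate, and the equalities $\Delta_H N = N = N\Delta_H$ hold before taking closures; then one notes that $N$ is already closed (Lagrangian submodules of non-degenerate Hermitian modules are $\Lambda$-pure, since if $\lambda x$ lies in $N=\mathrm{Ann}(N)$ with $\lambda\neq 0$, then $\xi(\lambda x, n)=\lambda\xi(x,n)=0$ for all $n\in N$, forcing $\xi(x,n)=0$).

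For the embedding $\Gamma\colon\mathbf{U}_\Lambda\hookrightarrow\mathbf{Lagr}_\Lambda$, the isotropy of $\Gamma_\gamma$ and injectivity of $\gamma\mapsto\Gamma_\gamma$ are immediate. The Lagrangian property of $\Gamma_\gamma$ follows from the unitarity of $\gamma$ together with non-degeneracy of $H_2$: if $v\oplus w$ annihilates $\Gamma_\gamma$, then $\xi_2(\gamma(v'),w)=\xi_1(v',v)=\xi_2(\gamma(v'),\gamma(v))$ for all $v'$, so $w-\gamma(v)$ is orthogonal to the image of $\gamma$, which equals $H_2$ by unitarity and dimension; non-degeneracy gives $w=\gamma(v)$. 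Finally, functoriality $\Gamma_{\gamma_2\gamma_1}=\Gamma_{\gamma_2}\circ\Gamma_{\gamma_1}$ reduces to checking that the underlying product $\Gamma_{\gamma_2}\Gamma_{\gamma_1}$ already equals $\Gamma_{\gamma_2\gamma_1}$ set-theoretically (so the closure operation has no effect here).

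The main obstacle is the first step: cleanly establishing that the composition of two Lagrangian relations, after closure, is again Lagrangian. Both the isotropy-is-preserved-by-closure lemma and the non-degeneracy argument showing $\overline{N_2N_1}\supseteq\mathrm{Ann}(\overline{N_2N_1})$ are the technical core, and they also underlie the well-definedness of associativity. Once this is in hand, the remaining verifications are formal.
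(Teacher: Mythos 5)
First, a point of reference: the paper does not actually prove Theorem~\ref{thm:Lagr} --- it explicitly defers to Section~2 of~\cite{CT} (``we only recall its main features and refer to \cite{CT} for details and proofs''), so there is no in-paper argument to match yours against. That said, your overall architecture is the right one, and the routine verifications are handled correctly: isotropy of $N_2N_1$, stability of isotropy under the closure $\overline{\,\cdot\,}$, associativity via identifying both bracketings with $\overline{N_3N_2N_1}$, the observation that Lagrangian submodules are already closed (so $\Delta_H$ is an identity), and the annihilator computation showing $\Gamma_\gamma$ is Lagrangian for unitary $\gamma$.

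The genuine soft spot is exactly the step you flag as the core: $\mathrm{Ann}(\overline{N_2N_1})\subseteq\overline{N_2N_1}$. This is not obtainable by ``a standard diagram chase using the non-degenerate pairings'' on $H_1$ and $H_3$. For one thing, the intermediate element $h_2$ must be produced in $H_2$, so non-degeneracy of $H_2$ and, crucially, the Lagrangian \emph{maximality} of $N_1$ and $N_2$ (not just their isotropy) are what drive the argument --- with merely isotropic $N_1,N_2$ and all $H_i$ non-degenerate the conclusion is simply false (take $N_1=N_2=0$). The actual mechanism is a symplectic-reduction/dimension argument: one passes to the quotient field $Q(\Lambda)$, where $N_1\oplus N_2$ is Lagrangian in $H_1\oplus H_2\oplus H_2\oplus H_3$, the coisotropic $C=H_1\oplus\Delta_{H_2}\oplus H_3$ has $C^{\perp}=0\oplus\Delta_{H_2}\oplus 0$, and the identities $(A\cap B)^{\perp}=A^{\perp}+B^{\perp}$ together with the modular law produce $h_2$ as the $\Delta_{H_2}$-component of an element of $(N_1\oplus N_2)\cap C$. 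Since $(A\cap B)^{\perp}=A^{\perp}+B^{\perp}$ fails over a general integral domain, the closure operation is precisely the device that transports this quotient-field computation back to $\Lambda$; your proposal should carry out (or at least invoke) this localization step, as written the existence of $h_2$ is asserted rather than proved.
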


Let us summarize the content of this paragraph. For any integral domain~$\Lambda$ endowed with a ring involution,
we have the diagram
\begin{displaymath}
\begin{xy}
(0,15)*+{\mathbf{U}_\Lambda}="a";
(20,15)*+{\mathbf{Lagr}_\Lambda}="c";
{\ar@{->}^-{\Gamma} "a";"c"};
(0,0)*+{\widetilde{\mathbf{U}}_\Lambda}="f";
(20,0)*+{\mathbf{Isotr}_\Lambda,}="h";
{\ar@{->}^-{\Gamma} "f";"h"};
{\ar@{->}^{} "a";"f"};
{\ar@{-->}^{} "c";"h"};
\end{xy} 
\end{displaymath}
where the horizontal arrows are the embeddings of categories given by the graph, and the vertical arrows denote the natural
embeddings of categories. The right hand side arrow is dashed because the composition in~$\mathbf{Lagr}_\Lambda$ is not always defined in the same way as in~$\mathbf{Isotr}_\Lambda$. However, if~$\Lambda$ is a field, then~$\overline{A}$ coincides with~$A$ for every subspace~$A\subset H$,  and this arrow does represent a functor.

\subsection{The Maslov index and the Meyer cocycle}
\label{sub:MM}

The Maslov index associates an integer to three isotropic subspaces of a symplectic vector space, while the Meyer cocycle associates an integer to two symplectic automorphisms. The aim of this paragraph is to review these constructions in the spirit of~\cite[Chapter IV.3]{Turaev}, adapting them to the setting of Hermitian complex vector spaces.

Fix a finite dimensional Hermitian complex vector space~$(H,\xi)$, and let~$L_1, L_2$ and~$L_3$ be three
isotropic subspaces of~$H$. Consider the Hermitian form~$f$ defined on~$ (L_1 + L_2) \cap L_3$ as follows:
for~$a,b \in (L_1 + L_2) \cap L_3$, write~$a=a_1+a_2$ with~$a_i\in L_i$, and set~$f(a,b)=\xi(a_2,b)$. One easily checks that~$f$ is a well-defined
Hermitian form.

\begin{definition}
\label{def:Maslov}
The signature of~$f$ is called the {\em Maslov index} of~$L_1,L_2$ and~$L_3$. It will be denoted by~$\mathit{Maslov} (L_1,L_2,L_3)$.
\end{definition}

It should be noted that two other definitions occur in the literature. In~\cite{CTC}, the Maslov index is defined by considering the same Hermitian form but on the quotient~$\frac{(L_1 +L_2) \cap L_3}{(L_1 \cap L_3)+( L_2 \cap L_3)}$. (See also~\cite{Py}.) In~\cite{GG}, the authors consider the space
\[ 
V= \lbrace v_1 \oplus v_2 \oplus v_3 \in L_1 \oplus L_2 \oplus L_3 \ | \ v_1+v_2+v_3=0 \rbrace
\] 
and the Hermitian form defined by sending elements~$a_1 \oplus a_2 \oplus a_3, b_1 \oplus b_2 \oplus b_3 \in V$ to~$\xi(a_2,b_1).$ These definitions are equivalent to ours. This can be seen by noting that if~$f$ is a Hermitian form on~$H$ and~$A$ is a subspace contained in~$\mathrm{Ann}(H)$, then~$f$ descends to a Hermitian form on~$H/A$ whose signature remains unchanged.

We record the following easy lemma for further use.

\begin{lemma}
\label{lemma:Maslov}
The Maslov index satisfies the following properties.
\begin{enumerate}[(i)]
\item{If~$L_1,L_2,L_3$ (resp.~$L'_1,L'_2,L'_3$) are isotropic subspaces of~$H$ (resp.~$H'$) then~$L_1\oplus L'_1,L_2\oplus L'_2,L_3\oplus L'_3$ are isotropic subspaces
of~$H\oplus H'$, and
\[
\mathit{Maslov}(L_1\oplus L'_1,L_2\oplus L'_2,L_3\oplus L'_3)=\mathit{Maslov} (L_1,L_2,L_3)+\mathit{Maslov} (L'_1,L'_2,L'_3)\,.
\]}
\item{For any isotropic subspaces~$L_1,L_2\subset H$,~$\mathit{Maslov}(L_1,L_2,L_2)$ vanishes.}
\item{If~$L_1,L_2,L_3$ are isotropic subspaces of~$H$ and~$\psi$ is a unitary automorphism of~$H$, then~$\psi(L_1),\psi(L_2),\psi(L_3)$ are isotropic subspaces
of~$H$, and
\[
\mathit{Maslov} (\psi(L_1),\psi(L_2),\psi(L_3))=\mathit{Maslov}(L_1,L_2,L_3)\,. 
\]}
\end{enumerate}
\end{lemma}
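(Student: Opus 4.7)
The plan is to derive all three assertions directly from Definition~\ref{def:Maslov}, in each case by examining how the defining Hermitian form~$f$ on~$(L_1+L_2)\cap L_3$ transforms under the operation in question. There is no substantial obstacle here: the arguments are essentially bookkeeping, and the only point that needs a moment of care is to check that the subspaces~$(L_1+L_2)\cap L_3$ identify in the expected way under direct sums and under unitary images, and that the (non-unique) decompositions~$a=a_1+a_2$ behave compatibly.

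For~(i), the form on~$H\oplus H'$ is the orthogonal sum of the forms on~$H$ and~$H'$, so~$L_i\oplus L_i'$ is isotropic whenever~$L_i$ and~$L_i'$ are. The key identity
\[
(L_1\oplus L_1'+L_2\oplus L_2')\cap(L_3\oplus L_3')=\bigl((L_1+L_2)\cap L_3\bigr)\oplus\bigl((L_1'+L_2')\cap L_3'\bigr)
\]
is immediate from the direct sum structure, and a direct unwinding of Definition~\ref{def:Maslov} then shows that the Hermitian form associated to the left-hand triple is precisely~$f\oplus f'$. Additivity of signatures under orthogonal direct sums yields the claim.

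For~(ii), setting~$L_3=L_2$ gives~$(L_1+L_2)\cap L_2=L_2$. For any~$a,b\in L_2$ and any decomposition~$a=a_1+a_2$ with~$a_i\in L_i$, the component~$a_2$ lies in~$L_2$ by construction, so~$f(a,b)=\xi(a_2,b)=0$ by isotropy of~$L_2$; hence~$f$ is identically zero and its signature vanishes. For~(iii), unitarity of~$\psi$ makes~$\psi(L_i)$ isotropic whenever~$L_i$ is, and~$\psi$ restricts to a linear isomorphism~$(L_1+L_2)\cap L_3\to(\psi(L_1)+\psi(L_2))\cap\psi(L_3)$ that intertwines the Hermitian form associated to~$(L_1,L_2,L_3)$ with the one associated to~$(\psi(L_1),\psi(L_2),\psi(L_3))$. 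Since signature is an isometry invariant of Hermitian forms, the two Maslov indices coincide.
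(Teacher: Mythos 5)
Your proof is correct, and it is exactly the routine verification the paper has in mind: the paper records Lemma~\ref{lemma:Maslov} as an ``easy lemma'' with no proof at all, relying on the reader to unwind Definition~\ref{def:Maslov} just as you do. All three identifications you use --- the direct-sum decomposition of $(L_1\oplus L_1'+L_2\oplus L_2')\cap(L_3\oplus L_3')$, the equality $(L_1+L_2)\cap L_2=L_2$ together with isotropy of $L_2$, and the fact that $\psi$ carries the form $f$ for $(L_1,L_2,L_3)$ isometrically onto the one for $(\psi(L_1),\psi(L_2),\psi(L_3))$ --- are valid, and the well-definedness of $f$ under the non-unique decompositions is already guaranteed by the definition.
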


Let us now introduce the second object of this paragraph.

\begin{definition}
\label{def:Meyer}
The {\em Meyer cocycle} of two unitary automorphisms~$\gamma_1,\gamma_2$ of~$H$ is the integer
\[
\mathit{Meyer}(\gamma_1,\gamma_2) = -\mathit{Maslov} (\Gamma_{\gamma_1^{-1}},\Gamma_{\mathit{id}},\Gamma_{\gamma_2}).
\]
\end{definition}

As for the Maslov index, some equivalent definitions appear in the literature. The Meyer cocyle was originally defined in~\cite{MeyPhd} (see also \cite{Mey}) by considering the space 
\[
M_{\gamma_1,\gamma_2}=\lbrace (v_1,v_2) \ | \  (\gamma_1^{-1}-\mathit{id})v_1=(\mathit{id}-\gamma_2)v_2 \rbrace
\] 
and taking the signature of the bilinear form~$B$ on~$M_{\gamma_1,\gamma_2}$ obtained by setting 
\[
B(v,w)=\xi(v_1+v_2,\gamma_1^{-1}(w_1)-w_1)
\]
for~$v=(v_1,v_2)$ and~$w=(w_1,w_2) \in M_{\gamma_1,\gamma_2}$. For computational purposes, the most practical definition of the Meyer cocycle is given in~\cite{GG}: the authors consider the space
\[
E_{\gamma_1,\gamma_2}=\mathit{Image}(\gamma_1^{-1}-\mathit{id})\cap \mathit{Image}(\mathit{id}-\gamma_2)
\]
and take the signature of the Hermitian form obtained by setting~$b(e,e')=\xi(x_1+x_2,e')$
for~$e=\gamma_1^{-1}(x_1)-x_1=x_2-\gamma_2(x_2)\in E_{\gamma_1,\gamma_2}$. It can be checked that these definitions are equivalent to the one we gave in terms of the Maslov index.

Let us show how to use the latter definition on a couple of explicit examples.

\begin{example}
\label{ex:MM1}
Let~$\omega$ be any complex number of modulus 1, and consider the one-dimensional complex vector space~$H=\C$ endowed with
the skew-Hermitian form given by the matrix
\[
\xi(\omega)=\left(\omega-\overline{\omega}\right)\,.
\]
The automorphism~$\gamma$ of~$H$ given by multiplication by~$-\omega$ is unitary with respect to the matrix~$\xi(\omega)$.
Since~$(\gamma^{-1}-\id)(-\omega)=(\id-\gamma)(1)=1+\omega=:e$, we get~$E_{\gamma\gamma}=\C e$ and
\[
b(e,e)=(1-\omega)(\omega-\overline{\omega})(1+\overline{\omega})=\|\omega-\overline{\omega}\|^2.
\]
This leads to
\begin{equation}
\label{equ:MM1}
\mathit{Meyer}(\gamma,\gamma)=\begin{cases}1 &\mbox{if } \omega\neq \pm 1; \\ 0 & \mbox{if } \omega=\pm 1.\end{cases}
\end{equation}
\end{example}

\begin{example}
\label{ex:MM2}
Let~$\omega$ be a complex number of modulus 1, and consider the two-dimensional complex vector space~$H$ endowed with
the skew-Hermitian form given by the matrix
\[
\xi(\omega)=\begin{pmatrix}
\omega-\overline{\omega} & 1-\omega\cr -1+\overline{\omega}&\omega-\overline{\omega}\cr
\end{pmatrix}\,.
\]
The automorphism~$\gamma_1$ of~$H$ given by the matrix
\[
\gamma_1=\begin{pmatrix}-\omega & 1\cr 0&1\cr\end{pmatrix}
\]
is unitary with respect to~$\xi(\omega)$, so~$\gamma_2:=\gamma_1^2$ is unitary as well. If~$\omega=1$,~$\xi(\omega)$ vanishes. For~$\omega\neq 1$, an immediate computation yields~$E_{\gamma_1,\gamma_2}=\C e$, with
\[
e:=\begin{pmatrix}1\cr 0\end{pmatrix}=(\gamma_1^{-1}-\mathit{id})\begin{pmatrix}0\cr \omega\end{pmatrix}=(\mathit{id}-\gamma_2)\begin{pmatrix}0\cr (\omega-1)^{-1}\end{pmatrix}
\]
This leads to
\[
b(e,e)=\begin{pmatrix}0&\omega+(\omega-1)^{-1}\end{pmatrix}\begin{pmatrix}\omega-\overline{\omega} & 1-\omega\cr -1+\overline{\omega}&\omega-\overline{\omega}\end{pmatrix}\begin{pmatrix}1\cr 0\end{pmatrix}=1-2\mathit{Re}(\omega)\,,
\]
so
\begin{equation}
\label{equ:MM2}
\mathit{Meyer}(\gamma_1,\gamma_2)=\mathrm{sgn}(1-2\mathit{Re}(\omega)).
\end{equation}
\end{example}

\begin{example}
\label{ex:MM3}
Fix~$(\omega_1,\omega_2)\in\mathbb{T}^2$, and consider the one-dimensional complex vector space endowed with
the skew-Hermitian form given by the matrix
\[
\xi(\omega_1,\omega_2)
=\left((\omega_1-\overline{\omega}_1)+(\omega_2-\overline{\omega_2})-(\omega_1\omega_2-\overline{\omega}_1\overline{\omega}_2)\right)\,.
\]
The automorphism~$\gamma$ given by multiplication by~$\omega_1\omega_2$ is unitary with respect to~$\xi(\omega_1,\omega_2)$. Since~$(\gamma^{-1}-\id)(\omega_1\omega_2)=(\id-\gamma)(1)=1-\omega_1\omega_2=:e$, we
get~$E_{\gamma,\gamma}=\C e$, and
\begin{align*}
\mathit{Meyer}(\gamma,\gamma) &=\mathrm{sgn}(b(e,e))=\mathrm{sgn}((\omega_1\omega_2+1)\xi(\omega_1,\omega_2)(1-\overline{\omega}_1\overline{\omega}_2))\\
&=\mathrm{sgn}\left(\mathit{Re}\left[(1-\omega_1)(1-\omega_2)(1-\omega_1\omega_2)\right]\right)\,.
\end{align*}
\end{example}

Let us conclude this paragraph with one last observation. Note that the definition of the Meyer cocycle depends on whether one chooses~$\xi \oplus -\xi$ or~$-\xi \oplus \xi$ as the skew-Hermitian form on~$H \oplus H$. We chose the latter, which explains the presence of the minus sign in our definition, a sign which does not appear in~\cite{GG}.

\subsection{The isotropic functor}
\label{sub:functor}

The paper~\cite{CT} deals with an extension of the Burau (and more generally, the colored Gassner) representation from braids
to (colored) tangles using free abelian coverings of the tangle exterior. We recall this construction in subsection~\ref{sub:Lagr}
below, but let us briefly describe its main features.

This extension takes the form of a functor~$\mathcal{F}\colon\mathbf{Tangles}_\mu\to\mathbf{Lagr}_{\Lambda_\mu}$, where~$\Lambda_\mu$ denotes the ring~$\Z[t_1^{\pm 1},\dots,t_\mu^{\pm 1}]$ endowed with the involution induced
by~$t_i\mapsto t_i^{-1}$. This functor fits in the commutative diagram
\[
\begin{xy}
(0,15)*+{\mathbf{Braids}_\mu}="a";
(25,15)*+{\mathbf{Tangles}_\mu}="c";
{\ar@{->}^-{} "a";"c"};
(0,0)*+{\mathbf{U}_{\Lambda_\mu}}="f";
(25,0)*+{\mathbf{Lagr}_{\Lambda_\mu},}="h";
{\ar@{->}^-{\Gamma} "f";"h"};
{\ar@{->}^{} "a";"f"};
{\ar@{->}^{\mathcal{F}} "c";"h"};
\end{xy} 
\]
where the horizontal arrows are the embeddings of categories described in Sections~\ref{sub:tangle} and~\ref{sub:cat}. It is an extension
of the Burau and Gassner representations in the following sense. If~$\alpha$ is an~$n$-strand~$\mu$-colored braid,
then~$\mathcal{F}(\alpha)$ is the graph of a unitary automorphism of a~$\Lambda_\mu$-module of rank~$n-1$,
which is free if~$\mu\le 2$, and whose localization with respect to the multiplicative set~$S\subset\Lambda_\mu$
generated by~$t_1-1,\dots,t_\mu-1$ is always free over the localized ring~$\Lambda_S:=S^{-1}\Lambda_\mu=\Z[t_1^{\pm 1},\dots,t_\mu^{\pm 1},(t_1-1)^{-1},\dots,(t_\mu-1)^{-1}]$. Then, with respect to the correct basis, the matrix of this automorphism in the case~$\mu=1$ (resp.~$\mu=n$) is nothing but the matrix of the reduced Burau
(resp. Gassner) representation of the braid group~$B_n$ (resp. pure braid group~$P_n$) at~$\alpha$.

More generally, for any sequence~$c\colon\{1,\dots,n\}\to\{\pm 1,\dots,\pm\mu\}$, the restriction of~$\mathcal{F}$ defines a
unitary representation of the corresponding braid group
\[
\mathcal{B}_t\colon B_c\longrightarrow U_{n-1}(\Lambda_S)\,,
\]
which we call the {\em colored Gassner representation\/}. (The terminology ``colored Burau representation'' is already used
for different (but related) objects~\cite{Morton,L-L}.)

Let us recall the explicit formulae for this representation in the case~$\mu=1$, referring to~\cite[p. 763]{CT} for the easy proof.

\begin{example}
\label{ex:burau}
Let~$\eps=(\eps_1,\dots,\eps_n)$ be any sequence of signs whose sum does not vanish, and let~$\sigma_i$ denote the standard
generator of the corresponding braid group~$B_\eps$, for~$i=1,\dots,n-1$. Then, with respect to a preferred basis, matrices for the Burau representation of~$\sigma_i$ are given by
\begin{equation}
\label{equ:burau}
\mathcal{B}_t(\sigma_1)=\begin{pmatrix}-t^{\eps_2}&1\cr 0&1\end{pmatrix}\oplus I_{n-3},\quad 
\mathcal{B}_t(\sigma_{n-1})=I_{n-3}\oplus\begin{pmatrix}1&0\cr t^{\eps_n}&-t^{\eps_n}\end{pmatrix}\,,
\end{equation}
\[
\mathcal{B}_t(\sigma_i)=I_{i-2}\oplus\begin{pmatrix}1&0&0\cr t^{\eps_{i+1}}&-t^{\eps_{i+1}}&1\cr 0&0&1\end{pmatrix}\oplus I_{n-i-2}\;\;\hbox{ for }\;2\le i\le n-2.
\]
\end{example}

The general formula for the colored Gassner representation can be quite complicated, so let us only give one easy 2-variable example,
which will be computed in subsection~\ref{sub:Lagr}.

\begin{example}
\label{ex:gassner}
The Gassner representation of the pure braid group~$P_2$ maps the standard generator~$A_{12}$ to the
matrix~$\mathcal{B}_{(t_1,t_2)}(A_{12})=(t_1t_2)$.
\end{example}

\begin{remark}
Note that the restriction of this functor to string links (i.e. tangles all of whose components
join~$D^2\times\{0\}$ and~$D^2\times\{1\}$) gives a colored generalization of the extension of the Gassner representation first introduced by Le Dimet~\cite{LeD}, and thoroughly studied by Kirk {\em et al.\/}~\cite{KLW}. However, we shall not investigate this special case in the present article.
\end{remark}

For the computations, it is also crucial to know the form of the skew-Hermitian~$\Lambda_\mu$-module~$\mathcal{F}(c)$
for any given~$c\colon\{1,\dots,n\}\to\{\pm 1,\dots,\pm\mu\}$. The
explicit general formula for the associated skew-Hermitian form~$\xi_c$ can be quite involved, so let us focus on a couple of examples. These computations will be explained in subsection~\ref{sub:Lagr}.

\begin{example}
\label{ex:form1}
In the case~$\mu=1$,~$c$ is a sequence~$(\eps_1,\dots,\eps_n)$ of~$\pm 1$'s. With respect to the preferred basis of~$\mathcal{F}(c)$ considered in
Example~\ref{ex:burau}, the~$\Z[t,t^{-1}]$-valued form~$\xi_c$ is given by the matrix
\begin{equation}
\label{equ:form1}
\begin{pmatrix}
  \frac{1}{2}(\varepsilon_1+\varepsilon_2)(t-t^{-1})& 1-t^{\varepsilon_2} & 0 & \hdots & 0\\
  t^{-\varepsilon_2}-1 & \frac{1}{2}(\varepsilon_2+\varepsilon_3)(t-t^{-1}) &  & \ddots & \vdots \\
  0 &  & \ddots  &  & 0 \\
  \vdots & \ddots &  & & 1-t^{\varepsilon_n}  \\
  0 & \hdots & 0 &t^{-\varepsilon_n}-1 & \frac{1}{2}(\varepsilon_{n-1}+\varepsilon_n)(t-t^{-1})
 \end{pmatrix}.
\end{equation}
Note that in the classical case~$\eps=(1,\dots,1)$, setting~$s=t^2$, we obtain a multiple of the matrix 
\[
\begin{pmatrix}
  s+s^{-1}& -s & 0 & \hdots & 0\\
  -s^{-1} & s+s^{-1} &  & \ddots & \vdots \\
  0 &  & \ddots  &  & 0 \\
  \vdots & \ddots &  & s+s^{-1} & -s  \\
  0 & \hdots & 0 & -s^{-1} & s+s^{-1}
 \end{pmatrix}\,,
\]
which is the form with respect to which the Burau representation was shown to be unitary by Squier~\cite{Squier}.
\end{example}

\begin{example}
\label{ex:form22}
Let us consider the case~$n=\mu=2$ and~$c=(1,2)$. One can show that with respect to some natural basis, the corresponding~$\Lambda_2$-valued
skew-Hermitian form~$\xi_c$ is given by the matrix
\[
\left((t_1-t_1^{-1})+(t_2-t_2^{-1})-(t_1t_2-t_1^{-1}t_2^{-1})\right)\,.
\]
\end{example}

\medskip

For the purpose of the present paper, we need to extend the {\em evaluation\/} of the colored Gassner representation
at~$t=\omega\in\mathbb{T}^\mu$ from braids to tangles. This requires the definition, for each~$\omega$, of a modified functor~$\mathcal{F}_\omega$,
which is constructed in the same manner as~$\mathcal{F}$, but using finite abelian branched covers instead of free abelian ones.
In this section, we only state its main properties, postponing to subsection~\ref{sub:isotr} its construction and the proof of these statements.

For each torsion element~$\omega$ in~$\mathbb{T}^\mu$, we construct a functor~$\mathcal{F}_\omega\colon\mathbf{Tangles}_\mu\to\mathbf{Isotr}_\C$ which fits in the commutative diagram
\[
\begin{xy}
(0,15)*+{\mathbf{Braids}_\mu}="a";
(30,15)*+{\mathbf{Tangles}_\mu}="c";
{\ar@{->}^-{} "a";"c"};
(0,0)*+{\mathbf{\widetilde{U}}_\C}="g";
(30,0)*+{\mathbf{Isotr}_\C\,.}="h";
{\ar@{->}^-{\Gamma} "g";"h"};
{\ar@{->}^{} "a";"g"};
{\ar@{->}^{\mathcal{F}_\omega} "c";"h"};
\end{xy} 
\]
(See Theorem~\ref{thm:isotropicfunctor} below.)
In general, the object~$\mathcal{F}_\omega(c)$ of~$\mathbf{Isotr}_\C$
associated to a sequence~$c$ of~$\pm 1,\dots,\pm\mu$ is a complex vector space endowed with a skew-Hermitian form which
can be degenerate. However, let us assume that each coordinate~$\omega_i$ of~$\omega$ is of order~$k_i>1$ with these~$k_i$'s
pairwise coprime. If the sequence~$c$ is such that for all~$i=1,\dots,\mu$,~$\ell(c)_i:=\sum_{j;c_j=\pm i}\mathrm{sgn}(c_j)$ does not vanish and is coprime to~$k_i$,
then~$\mathcal{F}_\omega(c)$ is non-degenerate. To be more precise, let us denote by~$\mathbf{Tangles}^\omega_\mu$
(resp.~$\mathbf{Braids}^\omega_\mu$) the full subcategory of~$\mathbf{Tangles}_\mu$ (resp.~$\mathbf{Braids}_\mu$) given by sequences fulfilling the condition above.
Then, the restriction of~$\mathcal{F}_\omega$ to~$\mathbf{Tangles}^\omega_\mu$ defines a functor which fits in the commutative diagram
\[
\begin{xy}
(0,15)*+{\mathbf{Braids}^\omega_\mu}="a";
(30,15)*+{\mathbf{Tangles}^\omega_\mu}="c";
{\ar@{->}^-{} "a";"c"};
(0,0)*+{\mathbf{U}_\C}="g";
(30,0)*+{\mathbf{Lagr}_\C.}="h";
{\ar@{->}^-{\Gamma} "g";"h"};
{\ar@{->}^{} "a";"g"};
{\ar@{->}^{\mathcal{F}_\omega} "c";"h"};
\end{xy} 
\]
(See Proposition~\ref{prop:FLagr}.)

Finally, one might wonder how the functors~$\mathcal{F}$ and~$\mathcal{F}_\omega$ are related. As far as objects are concerned, the
answer is very simple: for any~$c\colon\{1,\dots,n\}\to\{\pm 1,\dots,\pm\mu\}$, a matrix for the skew-Hermitian form of the complex vector
space~$\mathcal{F}_\omega(c)$ can be obtained by evaluating at~$t=\omega$ a matrix for the skew-Hermitian form~$\xi_c$ of the
(localized) module~$\mathcal{F}(c)$ (see Proposition~\ref{prop:form}). In particular, Examples~\ref{ex:form1} and~\ref{ex:form22} lead to the following results.

\begin{example}
\label{ex:form3}
In the case~$\mu=1$, a matrix for the skew-Hermitian complex form given by~$\mathcal{F}_\omega(c)$ is
\begin{equation}
\label{equ:form3}
\begin{pmatrix}
\frac{1}{2}(\varepsilon_1+\varepsilon_2)(\omega-\overline{\omega})& 1-\omega^{\varepsilon_2} & 0 & \hdots & 0\\
\overline{\omega}^{\varepsilon_2}-1 & \frac{1}{2}(\varepsilon_2+\varepsilon_3)(\omega-\overline{\omega}) &  & \ddots & \vdots \\
0 &  & \ddots  &  & 0 \\
\vdots & \ddots &  & & 1-\omega^{\varepsilon_n}  \\
0 & \hdots & 0 &\overline{\omega}^{\varepsilon_n}-1 & \frac{1}{2}(\varepsilon_{n-1}+\varepsilon_n)(\omega-\overline{\omega})
\end{pmatrix}.
\end{equation}
\end{example}

\begin{example}
\label{ex:form4}
In the case~$n=\mu=2$ and~$c=(1,2)$, a matrix for the skew-Hermitian complex form associated to~$\mathcal{F}_\omega(c)$ is
\[
\xi_c(\omega_1,\omega_2)=\left((\omega_1-\overline{\omega}_1)+(\omega_2-\overline{\omega}_2)-(\omega_1\omega_2-\overline{\omega}_1\overline{\omega}_2)\right)\,.
\]
\end{example}

For morphisms, the answer is trickier. Roughly speaking,~$\mathcal{F}_\omega$ is the evaluation of~$\mathcal{F}$ at~$t=\omega$ ``whenever that makes sense''. The precise statement is slightly technical, but the following special case will suffice for the purpose of the present discussion
(see subsection~\ref{sub:rel} for details).
Let us say that a tangle is {\em topologically trivial\/} if its exterior is homeomorphic to the exterior of a trivial braid. It turns out that
if a~$(c,c')$-tangle~$\tau$ is topologically trivial, then working over the localized ring~$\Lambda_S$, one obtains that~$\mathcal{F}(\tau)$
is a free submodule of the free~$\Lambda_S$-module~$(-\mathcal{F}(c))\oplus\mathcal{F}(c')$, so this inclusion can be encoded by a matrix~$M(t)$
with coefficients in~$\Lambda_S$.
In this case,~$\mathcal{F}_\omega(\tau)$ is equal to the complex subspace of~$(-\mathcal{F}_\omega(c))\oplus\mathcal{F}_\omega(c')$
encoded by the matrix~$M(\omega)$. In particular, since a braid~$\alpha$ is topologically trivial,~$\mathcal{F}_\omega(\alpha)$ is
nothing but the graph of the unitary automorphism given by the evaluation~$\mathcal{B}_\omega(\alpha)$ of a matrix~$\mathcal{B}_t(\alpha)$ of the colored Gassner representation at~$t=\omega$.

\subsection{Statement of the result and examples}
\label{sub:thm}

We are finally ready to state our main result in a precise way, and to illustrate it with examples.

Let~$\mathbb{T}^\mu_P$ denote the dense subset of the~$\mu$-dimensional torus~$\mathbb{T}^\mu$ composed of the elements~$\omega=(\omega_1,\dots,\omega_\mu)$ such that
the orders~$k_1,\dots,k_\mu$ of~$\omega_1,\dots,\omega_\mu$ are greater than~$1$ and pairwise coprime. Recall that for a coloring~$c\colon\lbrace 1,\dots,n\rbrace\to\lbrace\pm 1,\dots,\pm\mu\rbrace$,
we defined~$\ell(c)$ as the element in~$\Z^\mu$ whose~$i^{\mathit{th}}$ coordinate is given by~$\ell(c)_i=\sum_{j;c_j=\pm i}\mathrm{sgn}(c_j)$. 
Given a coloring~$c$ with~$\ell(c)_i\neq 0$ for all~$i$, let~$\mathbb{T}^\mu_c$ denote the dense subset of~$\mathbb{T}^\mu$ given by the
elements~$\omega$ such that for all~$i$,~$k_i$ is coprime to~$\ell(c)_i$.

\begin{theorem}
\label{thm:main}
For any~$c$ such that~$\ell(c)$ is nowhere zero and for any~$(c,c)$-colored tangles~$\tau_1$ and~$\tau_2$,
the equality
\[
\mathit{sign}_\omega(\widehat{\tau_1\tau_2})-\mathit{sign}_\omega(\widehat{\tau_1})-\sign_\omega(\widehat{\tau_2})=\mathit{Maslov}(\mathcal{F}_\omega( \overline{\tau}_1), \Delta ,\mathcal{F}_\omega(\tau_2))
\]
holds for all~$\omega$ in the dense subset~$\mathbb{T}^\mu_c\cap\mathbb{T}^\mu_P$ of the torus~$\mathbb{T}^\mu$.
\end{theorem}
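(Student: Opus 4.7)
The plan is to adapt the strategy of Gambaudo--Ghys to the multivariable and tangle setting, using $C$-complexes, abelian branched covers, and Wall's non-additivity theorem for signatures of $4$-manifolds.

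First I would construct, for each $(c,c)$-tangle $\tau$, a $C$-complex $F(\tau)$ for $\widehat{\tau}$ in a way compatible with composition: fix once and for all a standard $C$-complex $F_0$ for the trivial closure $\widehat{\id_c}$ in a collar of the middle disk $D^2\times\lbrace 1/2\rbrace$, and arrange by isotopy that every $F(\tau)$ meets this collar in $F_0\times(-\eps,\eps)$. Then $F(\tau_1\tau_2)$ decomposes as $F(\tau_1)\cup_{F_0}F(\tau_2)$. Pushing $F(\tau)$ slightly into the $4$-ball bounded by $S^3$ and taking the abelian $(k_1\times\cdots\times k_\mu)$-fold branched cover $W(\tau)$ along the pushed complex (where $k_i$ is the order of $\omega_i$), one identifies $\sign_\omega(\widehat{\tau})$ with the signature of the restriction of the intersection form to the $\omega$-eigenspace of $H_2(W(\tau);\C)$ under the deck action. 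The hypothesis $\omega\in\mathbb{T}^\mu_c\cap\mathbb{T}^\mu_P$ guarantees that the $\omega$-eigenspace of $H_1$ of the middle branched slice is non-degenerate, so that the Lagrangian machinery of subsection~\ref{sub:cat} applies.

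Second, I would apply Wall's non-additivity theorem equivariantly to $W(\tau_1\tau_2)=W(\tau_1)\cup_N W(\tau_2)$, where $N$ is the branched cover of the middle disk. On the $\omega$-eigenspace of $H_1(N;\C)$ this yields
\[
\sign_\omega(\widehat{\tau_1\tau_2})-\sign_\omega(\widehat{\tau_1})-\sign_\omega(\widehat{\tau_2})=\mathit{Maslov}(L_1,L_0,L_2),
\]
where $L_i$ is the kernel of $H_1(N;\C)_\omega\to H_1(W(\tau_i);\C)_\omega$ for $i=1,2$, and $L_0$ is the kernel coming from the trivial filling $F_0\times[0,1]$. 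By construction each $L_i$ is isotropic, and the non-degeneracy secured in Step~1 promotes them to Lagrangians.

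The heart of the proof, and the main obstacle, is to identify these three topologically defined Lagrangians with the algebraic objects $\mathcal{F}_\omega(\overline{\tau}_1)$, $\Delta$ and $\mathcal{F}_\omega(\tau_2)$. By the construction of $\mathcal{F}_\omega$ recalled in subsection~\ref{sub:isotr}, the $\omega$-eigenspace of $H_1$ of the branched cover of $(D^2,\lbrace x_1,\dots,x_n\rbrace)$ is exactly the Hermitian $\C$-module $\mathcal{F}_\omega(c)$, and $\mathcal{F}_\omega(\tau)$ is the kernel of the map on $H_1$ induced by the inclusion of the boundary into the branched cover of $(D^2\times I,\tau)$. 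Hence $L_2=\mathcal{F}_\omega(\tau_2)$ and $L_0=\Delta$ follow immediately. The subspace $L_1$ equals $\mathcal{F}_\omega(\overline{\tau}_1)$: the horizontal reflection appears because, in the Mayer--Vietoris decomposition of $\partial W(\tau_1\tau_2)$, the piece contributed by $\tau_1$ carries the opposite vertical orientation to the one used in the definition of $\mathcal{F}_\omega(\tau_1)$, and reversing that orientation is precisely the effect of $\tau_1\mapsto\overline{\tau}_1$. Tracking these orientation conventions through Wall's formula and matching them to the sign convention of Definition~\ref{def:Maslov} (in which $-\xi$ appears on the first factor) is the delicate bookkeeping underlying the proof; as a sanity check, specialising to braids recovers~(\ref{equ:GG}) via Definition~\ref{def:Meyer}, since $\overline{\alpha}=\alpha^{-1}$ and $\mathcal{F}_\omega(\alpha)=\Gamma_{\mathcal{B}_\omega(\alpha)}$.
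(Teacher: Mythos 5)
Your overall toolkit (pushed-in $C$-complexes, $G$-branched covers, the identification of $\sign_\omega$ with the signature of the $\omega$-eigenspace, Novikov--Wall) is the right one, but the central step of your proof does not exist: the branched cover $W(\widehat{\tau_1\tau_2})$ of $D^4$ does \emph{not} decompose as $W(\widehat{\tau_1})\cup_N W(\widehat{\tau_2})$ along the branched cover of the middle disk. The middle disk $D^2\times\{1/2\}$ separates the tangles $\tau_1$ and $\tau_2$ inside the cylinder, but the closure operation joins the top of $\tau_2$ back to the bottom of $\tau_1$ by strands outside the cylinder, so cutting $S^3$ (or $D^4$) along a single disk (or $3$-ball) does not produce two pieces whose boundaries carry the links $\widehat{\tau_1}$ and $\widehat{\tau_2}$. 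You would have to cut along \emph{two} disks, and each resulting piece then contains an open tangle in a cylinder rather than a closed link, so neither piece is a branched cover computing $\sign_\omega(\widehat{\tau_i})$. This is precisely why the paper replaces your single $4$-ball by the pair-of-pants construction: the surface $R(\tau_1,\tau_2)\subset P\times D^2$ has the three closures $\widehat{\tau_1}$, $\widehat{\tau_2}$, $\widehat{\tau_1\tau_2}$ sitting in the three boundary solid tori, the Maslov index of $\mathcal{F}_\omega(\overline{\tau}_1),\Delta,\mathcal{F}_\omega(\tau_2)$ arises from cutting $P_G(\tau_1,\tau_2)$ along the lift of an arc $\gamma\subset P$ (this is the part of your intuition that is correct), and the three link signatures enter only afterwards, through separate manifolds $C_G(\tau_i)$ glued onto those boundary tori. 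The theorem then follows not from a single Wall identity but from showing that the resulting closed-up manifold $M_G(\tau_1,\tau_2)$ over $\Sigma_2\times D^2$ has (essentially) vanishing $\omega$-signature.

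A second gap, independent of the first: even with the correct geometric setup, each gluing (capping off boundary tori, attaching the trivial fillings, comparing $\sigma_\omega$ of the branched cover of $N\cong[0,1]\times T$ with that of the full $4$-ball) contributes its own Novikov--Wall correction term, and these do not obviously vanish. The paper handles this by first proving (via the disjoint-union doubling trick of Proposition~\ref{prop: reduction} and Lemma~\ref{lem:trick}) that it suffices to establish the identity up to a uniformly bounded additive constant, and only then running the geometric argument. Your proposal presents the conclusion as a single exact application of Wall's theorem and makes no provision for these defects; without the reduction step the bookkeeping you defer to at the end cannot be closed. The identifications $L_2=\mathcal{F}_\omega(\tau_2)$, $L_0=\Delta$, $L_1=\mathcal{F}_\omega(\overline{\tau}_1)$ that you describe are essentially correct once one works in the right manifold (they correspond to Remark~\ref{rem: Fcharac} and the unitary involution $\psi=(-\mathit{id})\oplus\mathit{id}$), but they are the easy part.
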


\begin{remark}
\label{rem:trick}
Note that the conditions of~$\ell(c)$ being nowhere zero and~$\omega$ belonging to~$\mathbb{T}^\mu_c$ are not restrictive.
Indeed, assume that we want to evaluate the integer
\[
\delta_\omega(\tau_1,\tau_2):=\mathit{sign}_\omega(\widehat{\tau_1\tau_2})-\mathit{sign}_\omega(\widehat{\tau_1})-\sign_\omega(\widehat{\tau_2})
\]
for a given~$\omega\in\mathbb{T}_P^\mu$ and~$(c,c)$-tangles~$\tau_1,\tau_2$. Then, one can always find a sequence~$c'$
such that~$\ell(c\sqcup c')$ is nowhere zero and~$\omega$ belongs to~$\mathbb{T}^\mu_{c\sqcup c'}$. By Lemma~\ref{lemma:sign} together with
the fact that the signature of any colored unlink vanishes, we have the equality
\[
\delta_\omega(\tau_1,\tau_2)=\delta_\omega(\tau_1\sqcup\mathit{id}_{c'},\tau_2\sqcup\mathit{id}_{c'})
\]
for any~$c'$ and any~$\omega$. Therefore, Theorem~\ref{thm:main} allows us to compute this defect for any pair of colored tangles
and any~$\omega\in\mathbb{T}_P^\mu$.
\end{remark}

The special case~$\mu=1$, which corresponds to oriented tangles and the Levine-Tristram signature, takes the following form.

\begin{corollary}
\label{cor:oriented}
For any sequence~$\eps$ of~$\pm 1$'s whose sum~$\ell(\eps)$ does not vanish, and for any~$(\eps,\eps)$-tangles~$\tau_1$ and~$\tau_2$, the equality
\[
\mathit{sign}_\omega(\widehat{\tau_1\tau_2})-\mathit{sign}_\omega(\widehat{\tau_1})-\sign_\omega(\widehat{\tau_2})=\mathit{Maslov}(\mathcal{F}_\omega( \overline{\tau}_1), \Delta ,\mathcal{F}_\omega(\tau_2))
\]
holds for all~$\omega\in S^1\setminus\{1\}$ whose order is coprime to~$\ell(\eps)$.
\end{corollary}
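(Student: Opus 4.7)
The plan is to deduce Corollary~\ref{cor:oriented} directly from Theorem~\ref{thm:main} by specializing to the monochromatic case $\mu=1$. The approach is purely a matter of unwinding notation, so I will proceed in two steps: first translate the hypothesis on the coloring, then translate the set of admissible evaluation points~$\omega$.

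First I would check that the assumption ``$\ell(c)$ is nowhere zero'' of Theorem~\ref{thm:main} matches the hypothesis $\ell(\eps)\neq 0$ of the corollary. This is immediate: when $\mu=1$, a coloring $c$ reduces to a sequence $\eps$ of $\pm 1$'s, the integer vector $\ell(c)\in\Z^\mu$ collapses to the single integer $\ell(\eps)=\sum_j\eps_j\in\Z$, and ``nowhere zero'' becomes simply ``nonzero''. In particular every $(\eps,\eps)$-tangle $\tau_i$ in the corollary is automatically a $(c,c)$-tangle in the sense of Theorem~\ref{thm:main} once we view the orientation sequence $\eps$ as a $1$-coloring.

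Next I would identify the dense subset $\mathbb{T}^1_\eps\cap\mathbb{T}^1_P\subset S^1$ on which Theorem~\ref{thm:main} guarantees the equality. By definition, $\mathbb{T}^1_P$ consists of torsion elements $\omega\in S^1$ of order $k_1>1$ (the pairwise coprimality condition being vacuous for a single $k_1$), which amounts to the condition $\omega\neq 1$ among roots of unity. Similarly, $\mathbb{T}^1_\eps$ consists of torsion $\omega$ whose order is coprime to $\ell(\eps)$. Their intersection is therefore precisely the set of nontrivial roots of unity whose order is coprime to $\ell(\eps)$, matching the hypothesis of the corollary.

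With these identifications in place, Corollary~\ref{cor:oriented} follows verbatim from Theorem~\ref{thm:main}, so no further work is required. The only ``obstacle'' is the bookkeeping above; all the substance of the statement is already contained in the main theorem. The corollary is worth isolating because it is the oriented-tangle extension of the Gambaudo--Ghys formula~\eqref{equ:GG} and is the monochromatic case most readers will want to extract from the general multivariable theorem.
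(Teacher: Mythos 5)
Your proposal is correct and matches the paper, which offers no separate argument for Corollary~\ref{cor:oriented} and simply presents it as the immediate specialization of Theorem~\ref{thm:main} to $\mu=1$. Your unwinding of $\ell(c)$ to $\ell(\eps)$ and of $\mathbb{T}^1_\eps\cap\mathbb{T}^1_P$ to the nontrivial roots of unity of order coprime to $\ell(\eps)$ is exactly the intended bookkeeping.
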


Note that by Remark~\ref{rem:trick}, this actually allows us to compute the additivity defect of the Levine-Tristram signature
evaluated at any root of unity.

Let us now come back to the general multivariable case, but specialized to colored braids.
As explained in subsection~\ref{sub:functor}, if~$\alpha$ is a colored braid, then~$\mathcal{F}_\omega(\alpha)=\Gamma_{\mathcal{B}_\omega(\alpha)}$, the graph of the colored Gassner representation
evaluated at~$\omega$. Moreover, the horizontal reflection~$\overline{\alpha}$ of~$\alpha$ is nothing but its inverse.
Finally, as stated in subsection~\ref{sub:MM}, the Meyer cocycle and Maslov index are related by the equality
\[
\mathit{Maslov} (\Gamma_{\gamma_1^{-1}},\Delta,\Gamma_{\gamma_2})=
\mathit{Maslov} (\Gamma_{\gamma_1^{-1}},\Gamma_{\mathit{id}},\Gamma_{\gamma_2})=-\mathit{Meyer}(\gamma_1,\gamma_2).
\]
Consequently, we obtain the following corollary. 

\begin{corollary}
\label{cor:braid}
For any~$c$ such that~$\ell(c)$ is nowhere zero and for any two colored braids~$\alpha,\beta\in B_c$, the equality
\[
\mathit{sign}_\omega(\widehat{\alpha\beta})-\mathit{sign}_\omega(\widehat{\alpha})-\sign_\omega(\widehat{\beta})=
-\mathit{Meyer}(\mathcal{B}_\omega(\alpha), \mathcal{B}_\omega(\beta))
\]
holds for all~$\omega$ in~$\mathbb{T}^\mu_c\cap\mathbb{T}^\mu_P$.
\end{corollary}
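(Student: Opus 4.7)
The plan is to derive this statement as a direct specialization of Theorem~\ref{thm:main} to colored braids, using the three observations made in the paragraph preceding the corollary. Since $\mathbf{Braids}_\mu\subset\mathbf{Tangles}_\mu$, the tangles $\tau_1=\alpha$ and $\tau_2=\beta$ are legitimate inputs to Theorem~\ref{thm:main}, and the hypotheses on $c$ and on $\omega\in\mathbb{T}^\mu_c\cap\mathbb{T}^\mu_P$ are exactly those of the corollary. Applying the theorem yields
\[
\sign_\omega(\widehat{\alpha\beta})-\sign_\omega(\widehat{\alpha})-\sign_\omega(\widehat{\beta})=\mathit{Maslov}(\mathcal{F}_\omega(\overline{\alpha}),\Delta,\mathcal{F}_\omega(\beta)),
\]
so the only remaining task is to recognize this Maslov index as $-\mathit{Meyer}(\mathcal{B}_\omega(\alpha),\mathcal{B}_\omega(\beta))$.

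To do so I would invoke the commutative diagram for $\mathcal{F}_\omega$ in subsection~\ref{sub:functor}: its restriction to $\mathbf{Braids}^\omega_\mu$ factors through the graph embedding $\Gamma\colon\mathbf{U}_\C\hookrightarrow\mathbf{Lagr}_\C$ and sends any colored braid $\alpha$ to the graph $\Gamma_{\mathcal{B}_\omega(\alpha)}$ of the colored Gassner matrix $\mathcal{B}_\omega(\alpha)$ evaluated at $\omega$. Because $B_c$ is a group under composition with $\overline{\alpha}=\alpha^{-1}$ (subsection~\ref{sub:tangle}), functoriality gives $\mathcal{F}_\omega(\overline{\alpha})=\Gamma_{\mathcal{B}_\omega(\alpha)^{-1}}$ and similarly $\mathcal{F}_\omega(\beta)=\Gamma_{\mathcal{B}_\omega(\beta)}$. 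Moreover, the diagonal relation $\Delta$ on the Hermitian space $\mathcal{F}_\omega(c)$ is by construction $\Gamma_{\mathit{id}}$, the graph of the identity automorphism.

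Substituting these three identifications, the right-hand side of the displayed equation becomes
\[
\mathit{Maslov}(\Gamma_{\mathcal{B}_\omega(\alpha)^{-1}},\Gamma_{\mathit{id}},\Gamma_{\mathcal{B}_\omega(\beta)}),
\]
which by Definition~\ref{def:Meyer} is precisely $-\mathit{Meyer}(\mathcal{B}_\omega(\alpha),\mathcal{B}_\omega(\beta))$. This gives the desired equality.

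I do not expect a genuine obstacle: the corollary is a bookkeeping consequence of Theorem~\ref{thm:main}, of the functoriality diagram for $\mathcal{F}_\omega$ on braids (which in particular encodes that the Gassner matrices at $\omega$ are unitary with respect to $\xi_c(\omega)$), and of the definition of the Meyer cocycle in terms of the Maslov index. The only point worth writing down explicitly is the identification $\mathcal{F}_\omega(\overline{\alpha})=\Gamma_{\mathcal{B}_\omega(\alpha)^{-1}}$, which combines the fact that horizontal reflection realizes the group inverse in $B_c$ with the contravariance-free statement that $\Gamma_{\gamma^{-1}}$ is obtained from $\Gamma_\gamma$ by swapping factors, consistent with the definition of composition of graphs under $\mathcal{F}_\omega$.
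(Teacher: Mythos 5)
Your proposal is correct and follows essentially the same route as the paper: specialize Theorem~\ref{thm:main} to $\tau_1=\alpha$, $\tau_2=\beta$, use the identifications $\mathcal{F}_\omega(\overline{\alpha})=\Gamma_{\mathcal{B}_\omega(\alpha)^{-1}}$ (since $\overline{\alpha}=\alpha^{-1}$ and $\mathcal{F}_\omega$ restricts to the graph of the colored Gassner representation on braids) and $\Delta=\Gamma_{\mathit{id}}$, and conclude via Definition~\ref{def:Meyer}. This matches the argument given in the paragraph preceding the corollary in the paper.
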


Obviously, Remark~\ref{rem:trick} applies to this particular case, so we can compute the multivariable
signature evaluated at any~$\omega\in\mathbb{T}^\mu_P$. But more can be said in this case.

\begin{remark}
\label{rem:dense}
Observe that both sides of the equality in Corollary~\ref{cor:braid} are defined for all~$\omega\in\mathbb{T}^\mu$. Moreover,
it is known that the multivariable signature~$\sign(L)$ is constant on the connected components of~$\mathbb{T}^\mu\setminus V_L$,
where~$V_L$ denotes the intersection of the torus with the algebraic variety defined by the (colored) Alexander ideal of~$L$. (See~\cite[Section 4]{CF} for the precise statement.) Therefore, the left-hand side of this equality is constant on the connected
components of~$\mathbb{T}^\mu\setminus V$, where~$V$ is some algebraic variety defined by the Alexander ideals of the closures
of~$\alpha\beta$,~$\alpha$, and~$\beta$. A similar (but so far, less precise) statement can be proved for the right-hand side of this
equality: it is constant on the connected components of~$\mathbb{T}^\mu\setminus V'$, where~$V'$ is some algebraic variety.
Since Corollary~\ref{cor:braid} establishes the equality of these functions on a dense subset of the torus, we can conclude that they
coincide on the {\em open\/} dense subset~$\mathbb{T}^\mu\setminus (V\cup V')$.
\end{remark}

\begin{remark}
\label{rem:alg}
Let~$\alpha_\pm\in B_c$ be an arbitrary colored braid, and let~$\alpha_\mp\in B_c$ be obtained from~$\alpha_\pm$ by a crossing change.
Up to conjugation in the group~$B_c$, we have~$\alpha_\mp=\sigma_i^{\mp 2}\alpha_\pm$ for some~$i$. Therefore, Corollary~\ref{cor:braid} gives
\[
\mathit{sign}_\omega(\widehat{\alpha_\mp})-\mathit{sign}_\omega(\widehat{\alpha_\pm})=\sign_\omega(\widehat{\sigma_i^{\mp 2}})-\mathit{Meyer}(\mathcal{B}_\omega(\alpha_\pm), \mathcal{B}_\omega(\sigma_i^{\mp 2}))\,.
\]
Note that~$\sign_\omega(\widehat{\sigma_i^{\mp 2}})=\pm 1$ if both strands involved in the crossing have the same color, and it vanishes otherwise
(recall Example~\ref{ex:Hopf}).
Furthermore, the explicit (sparse) form of~$\mathcal{B}_\omega(\sigma_i^{\mp 2})$ implies that this Meyer cocycle is in~$\{-1,0,1\}$. This gives an explicit formula relating
the multivariable signature of two links related by a crossing change.

As a first consequence, we obtain a new proof of the well-known fact that (half of) the signature provides a lower bound for the
unlinking number of a link. (See e.g.~\cite[Proposition~5.3]{GG} for the univariate case, and~\cite[Section~5]{CF} for the general case.) Furthermore, since any colored link can be realized as the closure of a colored braid, and any colored braid can be transformed by
crossing changes into a braid whose closure is a trivial link, this provides a new algorithm for the computation of the multivariable signature of any colored link.
\end{remark}

\begin{remark}
Recall that given a coloring~$c\colon\lbrace 1,\dots,n\rbrace\to\lbrace\pm 1,\dots,\pm\mu\rbrace$, the corresponding colored Gassner representation evaluated at~$t=\omega$
is a homomorphism~$\mathcal{B}_\omega$ from the associated colored braid group~$B_c$ to the group of unitary automorphisms of a Hermitian complex vector space of dimension~$n-1$.
Since the Meyer cocycle evaluated on this group is the signature of a Hermitian form on a space of dimension at most~$2(n-1)$, Corollary~\ref{cor:braid} implies the inequality
\[
\big|\mathit{sign}_\omega(\widehat{\alpha\beta})-\mathit{sign}_\omega(\widehat{\alpha})-\sign_\omega(\widehat{\beta})\big|\le2(n-1)
\]
for all~$\alpha,\beta\in B_c$ and~$\omega$ in~$\mathbb{T}^\mu_c\cap\mathbb{T}^\mu_P$. In particular, for any such~$\omega$,
the map~$B_c\to\R$, $\alpha\mapsto\mathit{sign}_\omega(\widehat{\alpha})$ is a {\em quasimorphism\/}.
\end{remark}

Note that even the intersection of Corollaries~\ref{cor:oriented} and~\ref{cor:braid}, i.e. the case of oriented braids and the Levine-Tristram signature, is slightly more general than the main theorem of Gambaudo and Ghys stated in the introduction, as we allow the strands of the braids to be oriented in different directions.

With all these positive results, one might wonder whether the equation of Corollary~\ref{cor:braid} does not hold true for
all~$\omega\in\mathbb{T}^\mu$. This is {\em not\/} the case, even for the classical signature,
as demonstrated by the following simple example.

\begin{example}
\label{ex:comp1}
Consider the classical case~$\mu=1$ and~$c=(1,1)$, and let~$\alpha=\beta$ be the standard (positive) generator of the braid group~$B_2$.
Since~$\widehat{\alpha\beta}$ is the positive Hopf link~$\mathcal{H}$ and~$\widehat{\alpha}$ the unknot, Example~\ref{ex:Hopf} leads
to
\[
\mathit{sign}_\omega(\widehat{\alpha\beta})-\mathit{sign}_\omega(\widehat{\alpha})-\sign_\omega(\widehat{\beta})
=\sign_\omega(\mathcal{H})=-1
\]
for all~$\omega\in S^1\setminus\{1\}$.
On the other hand, we know by~$(\ref{equ:burau})$ that a matrix for the reduced Burau representation evaluated at~$\alpha$
is equal to~$\mathcal{B}_t(\alpha)=(-t)$, which is unitary with respect to the form~$(t-t^{-1})$ (recall~$(\ref{equ:form1})$).
By~$(\ref{equ:MM1})$, we have
\[
\mathit{Meyer}(\mathcal{B}_\omega(\alpha),\mathcal{B}_\omega(\beta))=\begin{cases}1 &\mbox{if } \omega\neq \pm 1; \\ 0 & \mbox{if } \omega=\pm 1.\end{cases}
\]
Therefore, we see that the equality in Corollary~\ref{cor:braid} is satisfied for all~$\omega\in S^1\setminus\{-1\}$, but {\em not\/}
for~$\omega=-1$, whose order is not coprime to~$n=2$. This shows that, even in the most basic case of the classical (Murasugi)
signature, one does need~$n$ to be odd for this equality to hold.
\end{example}

\begin{example}
\label{ex:comp2}
Let us now consider the case~$\mu=1$,~$n=2$, and~$\eps=(+1,-1)$. By~$(\ref{equ:form3})$, the associated skew-Hermitian
form~$\xi_c(\omega)$ vanishes for all~$\omega$. Hence, all the Meyer cocycles computed in this setting will vanish as well.
On the other hand, one can of course form oriented links with non-vanishing signature by closing up braids in~$B_c$. (The Hopf
link is the simplest example.) This shows that the assumption~$\ell(c)\neq 0$ is necessary for our result to hold, even in the simple
case of oriented braids.
\end{example}

We conclude this section with some more examples.

\begin{example}
\label{ex:comp3}
Let us compute the Levine-Tristram signature of the positive trefoil knot~${T}$ without using any Seifert surface.
Since we want to make sure we get the correct value at~$\omega=-1$, consider the standard generator~$\sigma_1$ in~$B_3$
(and not~$B_2$). Applying Corollary~\ref{cor:braid} to~$\alpha=\sigma_1$ and~$\beta=\sigma_1^2$, and using the
fact that~$\widehat{\alpha\beta}={T}$,~$\widehat{\beta}=\mathcal{H}$ whose signature is~$-1$,
and~$\widehat{\alpha}$ is the unknot whose signature vanishes, we get
\[
\mathit{sign}_\omega({T})+1=-\mathit{Meyer}(\gamma,\gamma^2)\,,\quad\text{where}\quad
\gamma=\mathcal{B}_\omega(\sigma_1)=\begin{pmatrix}-\omega & 1\cr 0&1\cr\end{pmatrix}
\]
by~$(\ref{equ:burau})$. Using~$(\ref{equ:form3})$, the relevant form is given by the matrix
\[
\begin{pmatrix}\omega-\overline{\omega} & 1-\omega\cr -1+\overline{\omega}&\omega-\overline{\omega}\end{pmatrix}\,,
\]
so~$(\ref{equ:MM2})$ gives
\[
\mathit{sign}_\omega({T})=-1+\mathrm{sgn}(2\mathit{Re}(\omega)-1).
\]
This turns out to be the correct value at all~$\omega\in S^1$, even at the roots of unity of order divisible by~$3$.
\end{example}

\begin{example}
\label{ex:comp4}
Consider the 2-colored link~$L$ illustrated in Figure~\ref{fig:ex4}. Clearly, it is the closure of the square of the standard generator~$A_{12}$ of the pure braid group~$P_2$. Applying Corollary~\ref{cor:braid} to~$\alpha=\beta=A_{12}\in P_2$,
and using the fact that~$\widehat{A_{12}}$ is the 2-colored Hopf link whose 2-variable signature vanishes, we get
\[
\mathit{sign}_{(\omega_1,\omega_2)}(L)=-\mathit{Meyer}(\gamma,\gamma)\,,\quad\text{where}\quad
\gamma=\mathcal{B}_{(\omega_1,\omega_2)}(A_{12})=(\omega_1\omega_2)
\]
by Example~\ref{ex:gassner}. Using Examples~\ref{ex:form4} and~\ref{ex:MM3}, we obtain
\[
\mathit{sign}_{(\omega_1,\omega_2)}(L)=-\mathrm{sgn}\left(\mathit{Re}\left[(1-\omega_1)(1-\omega_2)(1-\omega_1\omega_2)\right]\right).\
\]
Comparing this with Example~\ref{ex:4}, it remains to see when
\[
\mathit{Re}\left[(1-\omega_1)(1-\omega_2)\right]\quad\text{and}\quad\mathit{Re}\left[(1-\omega_1)(1-\omega_2)(1-\omega_1\omega_2)\right]
\]
have the same sign. Writing~$\omega_1=e^{i \theta_1}$ and~$\omega_2=e^{i\theta_2}$, one easily checks that this is the case if and only if~$\omega_1\omega_2\neq 1$.
In particular, this holds when the orders of the roots of unity~$\omega_1,\omega_2$ are coprime, as predicted by Corollary~\ref{cor:braid}.
However, this example shows that the hypothesis~$\omega\in\mathbb{T}^\mu_P$ is necessary for this result to hold.
\end{example}


\section{Algebraic and topological preliminaries}
\label{sec:prelim}

The aim of this section is to introduce the tools needed to prove our main result. In subsection~\ref{sub:eval},  we deal with generalized eigenspaces while in subsection~\ref{sub:sign2}, we review signatures of~$4$-manifolds. Then, building on this, we define and study the isotropic functor in subsection~\ref{sub:isotr}.

\subsection{Generalized eigenspaces}
\label{sub:eval}

Let~$k_1,\dots,k_\mu$ be positive integers, and let~$G$ denote the finite abelian group~$C_{k_1}\times\dots\times C_{k_\mu}$.
In all this paragraph, we fix a~$\C$-algebra homomorphism
\[
\chi\colon\C[G]\to\C\,.
\]
Note that such a homomorphism is simply given by a character of~$G$, or equivalently, by the choice for~$i=1,\dots,\mu$
of an element~$\omega_i\in S^1$ whose order divides~$k_i$. In other words, it is given by an element~$\omega=(\omega_1,\dots,\omega_\mu)$ of~$\mathbb{T}^\mu$.
Note also that such a~$\chi$ automatically preserves the involutions given
by~$\sum z_g g \mapsto \sum \overline{z_g} g^{-1}$ on~$\C[G]$ and by the complex conjugation on~$\C$.

\begin{terminology}
Given a~$\C[G]$-module~$H$, the {\em generalized eigenspace} associated to the character~$\chi\colon\C[G]\to\C$ is the complex vector space
\[
H_\chi=\lbrace x \in H \ |\ gx=\chi(g)x \,\text{ for all }\, g \in G \rbrace\,.
\]
Since~$H_\chi$ is completely determined by the element~$\omega\in\mathbb{T}^\mu$
corresponding to~$\chi$, we shall often write~$H_\omega$ instead of~$H_\chi$.
\end{terminology}

Denote by~$c_\chi$ the element of~$\C[G]$ defined by 
\[
c_\chi=\frac{1}{|G|}\sum_{g \in G}\overline{\chi(g)}g\,.
\]
One can easily check that for any~$g \in G$, one has~$g\,c_\chi=\chi(g) c_\chi$, which implies that~$c_\chi c_\chi=c_\chi$.
The additional equality~$\chi(c_\chi)=1$ is also easy to check. 
These properties are useful to give an alternative characterization of generalized eigenspaces.

\begin{lemma}
\label{lem: charac}
For any~$\C[G]$-module~$H$, the generalized eigenspace~$H_\chi$ is equal to~$c_\chi H$.
\end{lemma}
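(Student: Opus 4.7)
The plan is to prove the two inclusions separately, using only the three bullet-pointed properties of $c_\chi$ recorded just above the lemma, namely $g\,c_\chi = \chi(g)\,c_\chi$ for all $g\in G$, the idempotency $c_\chi c_\chi = c_\chi$, and the normalization $\chi(c_\chi)=1$. In effect, $c_\chi$ is the projector onto the $\chi$-isotypic component, and the lemma simply identifies its image.

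For the inclusion $c_\chi H \subseteq H_\chi$, I would pick an arbitrary element $y = c_\chi x$ with $x \in H$ and check the defining relation of $H_\chi$ directly: for any $g \in G$,
\[
g y = g\,c_\chi\,x = \chi(g)\,c_\chi\,x = \chi(g)\,y,
\]
using the first property of $c_\chi$. This shows $y \in H_\chi$.

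For the reverse inclusion $H_\chi \subseteq c_\chi H$, I would take $x \in H_\chi$ and compute
\[
c_\chi\,x = \frac{1}{|G|}\sum_{g \in G}\overline{\chi(g)}\,g\,x = \frac{1}{|G|}\sum_{g \in G}\overline{\chi(g)}\,\chi(g)\,x = \chi(c_\chi)\,x = x,
\]
where the second equality uses $x \in H_\chi$, and the last equality uses $\chi(c_\chi)=1$ (equivalently, since each $\chi(g)$ is a root of unity, $\overline{\chi(g)}\chi(g)=1$, so the coefficient of $x$ is $1$). Hence $x = c_\chi x \in c_\chi H$.

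There is no real obstacle: the statement is a formal consequence of the three properties of $c_\chi$ already established in the paragraph, and the argument is the standard projector/idempotent argument from the representation theory of finite abelian groups. The only subtlety worth flagging in passing is that $\chi$ preserves the involutions, which ensures $c_\chi$ is well-defined and that $\overline{\chi(g)} = \chi(g^{-1})$, but this is neither needed explicitly in the computation nor constitutes a difficulty.
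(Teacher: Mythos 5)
Your proof is correct and follows essentially the same two-inclusion argument as the paper: the first property $g\,c_\chi=\chi(g)c_\chi$ gives $c_\chi H\subseteq H_\chi$, and the computation $c_\chi x=\frac{1}{|G|}\sum_g\overline{\chi(g)}\chi(g)x=x$ for $x\in H_\chi$ gives the reverse inclusion. Nothing to add.
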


\begin{proof}
If~$c_\chi x$ is an arbitrary element of~$c_\chi H$, then~$g\,c_\chi x=\chi(g) c_\chi x$  so~$c_\chi x$ belongs to~$H_\chi$.
Conversely, if~$x$ is an element of~$H_\chi$, then
\[
c_\chi x= \frac{1}{|G|}\sum_{g\in G} \overline{\chi(g)}g x=\frac{1}{|G|}\sum_{g\in G} \overline{\chi(g)} \chi(g) x=x\,,
\]
so~$x=c_\chi x$ lies in~$c_\chi H$.
\end{proof}

If~$H$ and~$H'$ are~$\C[G]$-modules, then any~$\C[G]$-linear map~$f \colon H \rightarrow H'$ restricts to a map~$f_\chi \colon H_\chi \rightarrow H_\chi'$
on generalized eigenspaces, thus defining a functor from the category of~$\C[G]$-modules to the category of complex vector spaces.
Let us analyse some further properties of this functor.

\begin{proposition}
\label{prop: nondeg}
\begin{enumerate}
\item The functor~$H\mapsto H_\chi$ preserves exact sequences.
\item If~$V$ is a submodule of~$H$ satisfying~$c_\chi V=0$, then~$(H/V)_\chi=H_\chi$.
\item If~$\xi$ is a non-degenerate form on~$H$ and~$G$ acts on~$H$ by unitary isomorphisms, then the restriction of~$\xi$
to~$H_\chi$ is also non-degenerate.
\end{enumerate}
\end{proposition}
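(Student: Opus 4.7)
The plan is to exploit Lemma~\ref{lem: charac}, which identifies~$H_\chi$ with~$c_\chi H$, together with two elementary properties of the projector~$c_\chi \in \C[G]$: it is idempotent ($c_\chi^2 = c_\chi$), and it is self-adjoint under the involution on~$\C[G]$, i.e.~$\overline{c_\chi} = c_\chi$, which follows from the identity~$\chi(g^{-1}) = \overline{\chi(g)}$ available since~$\chi(g)$ lies on the unit circle.

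For part~(1), I will invoke semisimplicity of~$\C[G]$: every~$\C[G]$-module splits canonically as~$H = \bigoplus_\psi H_\psi$ over the characters~$\psi$ of~$G$, and the functor~$H \mapsto H_\chi$ is simply the projection onto the~$\chi$-summand, hence manifestly exact. Alternatively, one may argue directly that any~$\C[G]$-linear map commutes with~$c_\chi$, so applying the idempotent to a short exact sequence preserves exactness. Part~(2) is then an immediate consequence of~(1): applying exactness to~$0 \to V \to H \to H/V \to 0$ yields~$0 \to V_\chi \to H_\chi \to (H/V)_\chi \to 0$, and the hypothesis~$c_\chi V = 0$ gives~$V_\chi = 0$, so the induced map~$H_\chi \to (H/V)_\chi$ is an isomorphism.

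For part~(3), the key step---and the one I expect to be the main technical point---is to verify that~$c_\chi$ is self-adjoint with respect to~$\xi$, meaning~$\xi(c_\chi x, y) = \xi(x, c_\chi y)$ for all~$x,y \in H$. This should reduce to the unitarity hypothesis~$\xi(gx,gy) = \xi(x,y)$, which extends~$\C$-antilinearly in the group element to~$\xi(hx,y) = \xi(x, \overline{h}y)$ for every~$h \in \C[G]$, combined with the self-adjointness~$\overline{c_\chi} = c_\chi$ recorded above. Granted this, suppose~$x \in H_\chi$ satisfies~$\xi(x,y)=0$ for all~$y \in H_\chi$. Since~$c_\chi z \in H_\chi$ for every~$z \in H$, one computes
\[
\xi(x,z) \;=\; \xi(c_\chi x, z) \;=\; \xi(x, c_\chi z) \;=\; 0,
\]
and non-degeneracy of~$\xi$ on~$H$ then forces~$x = 0$, which is exactly non-degeneracy of the restriction to~$H_\chi$.
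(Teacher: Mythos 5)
Your proof is correct and follows essentially the same route as the paper: exactness via the idempotent~$c_\chi$ (the paper uses your ``direct'' argument rather than the semisimple decomposition, but these are interchangeable), part~(2) as a corollary of part~(1) together with~$V_\chi=c_\chi V=0$, and part~(3) via the self-adjointness of~$c_\chi$ with respect to~$\xi$, which is exactly the computation~$\xi(c_\chi x,c_\chi y)=\xi(c_\chi c_\chi x,y)=\xi(c_\chi x,y)$ appearing in the paper. You correctly identify the unitarity of the~$G$-action combined with~$\overline{c_\chi}=c_\chi$ as the point that makes part~(3) work.
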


\begin{proof}
For the first assertion, consider~$\C[G]$-linear maps~$f\colon H \rightarrow H'$ and~$g\colon H' \rightarrow H''$ such that~$\mathit{Ker}(g)=\mathit{Image}(f)$;
we must show that~$\mathit{Ker}(g_\chi)=\mathit{Image}(f_\chi)$. One inclusion follows directly from the functoriality~$(g\circ f)_\chi=g_\chi\circ f_\chi$. For the other
one, fix~$c_\chi x \in \mathit{Ker}(g_\chi) = H'_\chi\cap\mathit{Ker}(g)$. By exactness, there exists~$y \in H$ such that~$f(y)=c_\chi x$. As~$f_\chi(c_\chi y)=c_\chi f(y)=c_\chi c_\chi x=c_\chi x$, the equality is proved. The second statement follows from the first one together with Lemma~\ref{lem: charac} and the hypothesis~$c_\chi V=0$:
\[
(H/V)_\chi=H_\chi/V_\chi=H_\chi/c_\chi V=H_\chi\,.
\]
Finally, let~$c_\chi x$ be an element of~$c_\chi H=H_\chi$ such that~$\xi(c_\chi x, c_\chi y)=0$ for every~$c_\chi y \in H_\chi$.
Using the fact that the elements of~$G$ act by isometries, together with the equality~$c_\chi c_\chi=c_\chi$, one obtains 
\[
0=\xi(c_\chi x, c_\chi y)=\xi(c_\chi c_\chi x, y)=\xi(c_\chi x,y)
\]
for every~$y \in H$. As~$\xi$ is non-degenerate on~$H$, this forces~$c_\chi x=0$.
\end{proof}

As the reader might have guessed, another description of these generalized eigenspaces can be given using tensor products. Indeed, the
homomorphism~$\chi\colon\C[G] \rightarrow \C$ endows the field~$\C$ with a structure of module over~$\C[G]$. To emphasize this action, we shall denote this~$\C[G]$-module by~$\C_\chi$.  Given a Hermitian~$\C[G]$-module~$(H,\xi)$, one can therefore consider the complex vector space~$H\otimes_{\C[G]}\C_\chi$ endowed with the skew-Hermitian form~$\xi^\chi(x \otimes u, y \otimes v)=u \overline{v} \chi(\xi(x,y))$.

\begin{proposition}
\label{prop: eval eingenspace}
Given any Hermitian~$\C[G]$-module~$(H,\xi)$, the map~$\Phi_H\colon H \otimes_{\C[G]} \C_\chi \rightarrow H_\chi$ defined by~$\Phi_H(x \otimes u)= u c_\chi x$ is an
isomorphism of complex vector spaces, unitary with respect to the forms~$\xi^\chi$ and~$\chi\circ\xi$. Furthermore,
~$f_\chi\circ\Phi_H=\Phi_{H'}\circ (f\otimes\mathit{id}_{\C_{\chi}})$ for any~$\C[G]$-linear map~$f\colon H\to H'$.
\end{proposition}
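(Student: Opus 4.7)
The plan is to construct $\Phi_H$, exhibit an explicit inverse, and then check the unitarity and naturality by direct algebraic manipulations. The key ingredients are the identities $c_\chi^2 = c_\chi$, $\chi(c_\chi) = 1$, and $g c_\chi = \chi(g) c_\chi$ already recorded, together with the additional identity $\overline{c_\chi} = c_\chi$, which follows from $\chi(g^{-1}) = \overline{\chi(g)}$ for the root-of-unity values of $\chi$.

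First I would verify that $\Phi_H$ is well-defined. The assignment $(x,u)\mapsto uc_\chi x$ is manifestly $\C$-bilinear, and the only point to check is $\C[G]$-balancedness: for every $g\in G$ one needs $\Phi_H(gx\otimes u) = \Phi_H(x\otimes \chi(g)u)$, which reduces to the equality $c_\chi g = \chi(g)c_\chi$ in $\C[G]$ (true because $G$ is abelian and $g c_\chi = \chi(g) c_\chi$). Next, I would define $\Psi_H \colon H_\chi \to H \otimes_{\C[G]} \C_\chi$ by $y\mapsto y\otimes 1$, which makes sense since $H_\chi\subset H$ by Lemma~\ref{lem: charac}. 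For $y = c_\chi z\in H_\chi$ we have $\Phi_H(\Psi_H(y)) = c_\chi y = c_\chi^2 z = c_\chi z = y$, while
\[
\Psi_H(\Phi_H(x\otimes u)) = uc_\chi x\otimes 1 = x\otimes \chi(c_\chi)u = x\otimes u,
\]
where we have moved $uc_\chi$ across the tensor product via the action defined by $\chi$. This proves that $\Phi_H$ is a $\C$-linear isomorphism.

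For the unitarity assertion, I would evaluate both skew-Hermitian forms on a pair of pure tensors. One has $\xi^\chi(x\otimes u,y\otimes v) = u\overline{v}\,\chi(\xi(x,y))$, while on the image side
\[
(\chi\circ\xi)(\Phi_H(x\otimes u),\Phi_H(y\otimes v)) = u\overline{v}\,\chi(\xi(c_\chi x, c_\chi y)).
\]
Using the $\C[G]$-sesquilinearity of $\xi$ together with $\overline{c_\chi} = c_\chi$, we obtain
\[
\xi(c_\chi x, c_\chi y) = c_\chi\,\overline{c_\chi}\,\xi(x,y) = c_\chi^2\,\xi(x,y) = c_\chi\,\xi(x,y),
\]
and applying $\chi$ and invoking $\chi(c_\chi)=1$ yields the desired equality. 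Finally, the naturality relation $f_\chi\circ\Phi_H = \Phi_{H'}\circ(f\otimes\id_{\C_\chi})$ is an immediate consequence of the $\C[G]$-linearity of $f$, since then $f(c_\chi x) = c_\chi f(x)$ and both sides send $x\otimes u$ to $uc_\chi f(x)$. There is no genuine obstacle in this argument; the only subtlety is bookkeeping with the involution on $\C[G]$, which is precisely what makes it necessary to observe $\overline{c_\chi} = c_\chi$ before the unitarity calculation can be carried out.
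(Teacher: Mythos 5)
Your proof is correct and follows essentially the same route as the paper: the explicit inverse $y\mapsto y\otimes 1$ is just a repackaging of the paper's injectivity computation $x\otimes 1=c_\chi x\otimes 1$ combined with the surjectivity from Lemma~\ref{lem: charac}, and the unitarity and naturality checks are the same direct calculations (the paper leaves implicit the observation $\overline{c_\chi}=c_\chi$, or equivalently $\chi(\overline{c_\chi})=\overline{\chi(c_\chi)}=1$, which you rightly make explicit).
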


\begin{proof}
The map~$\Phi_H$ is surjective thanks to Lemma~\ref{lem: charac}, while its injectivity follows from the equation
\[
x \otimes 1= x \otimes \chi(c_\chi)= c_\chi x \otimes 1 = \Phi(x\otimes 1)\otimes 1\,.
\]
The equality~$\chi(c_\chi)=1$ easily implies that~$\Phi_H$ is unitary. Finally, the last statement follows from the definitions.
\end{proof}

Note that if~$H$ if a free~$\C[G]$-module of rank~$n$, then~$H_\chi=H \otimes_{\C[G]} \C_\chi$ is a complex vector space of dimension~$n$. By standard properties of the tensor product, we also have the following result, that we record here for further use.

\begin{lemma}
\label{lem: eval}
Let~$f\colon H \rightarrow H'$ be a~$\C[G]$-linear map between free~$\C[G]$-modules and let~$\xi$ be a skew-Hermitian form on~$H$. Fix bases~$v_1,\dots,v_m$ for~$H$
and~$w_1,\dots,w_n$ for~$H'$. Then, the matrix for~$f \otimes \mathit{id}_{\C_{\chi}}$ (resp.~$\xi^\chi$) with respect to the bases~$v_1 \otimes 1,\dots,v_m\otimes 1$
and~$w_1 \otimes 1,\dots,w_n \otimes 1$ is equal to the componentwise evaluation by~$\chi$ of the matrix for~$f$ (resp.~$\xi$).
\end{lemma}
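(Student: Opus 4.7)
The plan is to unpack both claims directly from the definitions, since each reduces to observing how $\C[G]$-scalars migrate across the tensor product $\otimes_{\C[G]}\C_{\chi}$ (they get replaced by their image under $\chi$).

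For the claim about $f$, I would begin by writing $f(v_i)=\sum_j a_{ji}\,w_j$ with $a_{ji}\in\C[G]$, so that the matrix of $f$ in the given bases is $(a_{ji})$. Then
\[
(f\otimes\mathit{id}_{\C_\chi})(v_i\otimes 1)=f(v_i)\otimes 1=\sum_j a_{ji}w_j\otimes 1=\sum_j w_j\otimes(a_{ji}\cdot 1)=\sum_j \chi(a_{ji})(w_j\otimes 1),
\]
where the third equality uses that the tensor product is over $\C[G]$, and the fourth uses that the $\C[G]$-module structure on $\C_\chi$ is defined via $\chi$. Reading off the coefficients yields the matrix $(\chi(a_{ji}))$, which is exactly the componentwise evaluation of $(a_{ji})$.

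For the claim about $\xi^\chi$, setting $\xi(v_i,v_j)=b_{ij}\in\C[G]$, the definition given just before Proposition~\ref{prop: eval eingenspace} directly gives
\[
\xi^\chi(v_i\otimes 1,v_j\otimes 1)=1\cdot\overline{1}\cdot\chi(\xi(v_i,v_j))=\chi(b_{ij}),
\]
which is the componentwise evaluation of the matrix $(b_{ij})$ of $\xi$.

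There is no genuine obstacle here: the lemma is a bookkeeping statement and both halves collapse after a single use of the universal property of the tensor product over $\C[G]$. The only thing worth being explicit about in the write-up is the convention that $\C_\chi$ carries the $\C[G]$-action through $\chi$, so that ``pulling a coefficient $a\in\C[G]$ across $\otimes_{\C[G]}$ and letting it act on $1\in\C_\chi$'' is literally multiplication by $\chi(a)$.
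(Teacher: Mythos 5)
Your argument is correct and is exactly the routine tensor-product computation the paper has in mind: the paper in fact omits the proof entirely, stating only that the lemma follows ``by standard properties of the tensor product.'' Your write-up simply makes explicit the migration of $\C[G]$-coefficients across $\otimes_{\C[G]}\C_\chi$ and the evaluation $\xi^\chi(v_i\otimes 1,v_j\otimes 1)=\chi(\xi(v_i,v_j))$, both of which are accurate.
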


By Proposition~\ref{prop: eval eingenspace}, the same result holds with~$f\otimes \mathit{id}_{\C_{\chi}}$,~$\xi^\chi$ and~$v_i\otimes 1$ replaced
by~$f_\chi$,$\chi\circ\xi$ and~$c_\chi v_i$, respectively.
Finally, note that all the results of this paragraph still hold if we consider Hermitian forms instead of skew-Hermitian ones.

\subsection{Signatures of~$4$-manifolds}
\label{sub:sign2}

The aim of this paragraph is to review the signatures associated to~$4$-manifolds endowed with the action of a finite abelian group. In particular, we shall recall
the celebrated {\em Novikov-Wall theorem\/} on the non-additivity of these signatures.

\medbreak

Let~$M$ be a compact oriented~$2n$-dimensional manifold endowed with the action of a finite abelian group~$G$. The homology of~$M$ with complex coefficients is endowed with a structure of module over~$\C[G]$. In particular, if~$\chi\colon \C[G] \rightarrow \C$ is a ~$\C$-algebra homomorphism, one may consider the generalized eigenspace 
\[
H_n(M)_\chi:=\lbrace x \in H_n(M;\C) \ | \ \chi(g)x=gx \ \text{for all} \ g \in G \rbrace\,.
\]
This complex vector space comes equipped with a~$(-1)^n$-Hermitian form given by the restriction of the intersection form~$\langle \ , \ \rangle$ of~$H_n(M;\C)$.
On the other hand, for any~$x,y \in H_n(M;\C)$, one can define
\[
\xi(x,y)=\sum_{g \in G} \langle gx,y\rangle g^{-1}\,,
\] 
and~$\chi\circ\xi$ gives another complex valued~$(-1)^n$-Hermitian form on~$H_n(M)_\chi $. It turns out that these two forms are closely related.

\begin{proposition}
\label{prop: forms}
On the space~$H_n(M)_\chi$, the intersection form~$\langle \ , \ \rangle$ and the pairing~$\chi \circ \xi $ coincide up to a positive multiplicative constant.
\end{proposition}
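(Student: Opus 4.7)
The plan is to compute $\chi(\xi(x,y))$ directly for two elements $x,y$ of the generalized eigenspace $H_n(M)_\chi$ and recognize that the result is a fixed positive multiple of $\langle x,y\rangle$. For elements of $H_n(M)_\chi$ we have $gx=\chi(g)x$ for every $g\in G$, so by the $\C$-linearity of the intersection form in the first argument,
\[
\langle gx,y\rangle=\chi(g)\langle x,y\rangle\,.
\]
Substituting this into the definition of $\xi$ and applying $\chi$ gives
\[
\chi\bigl(\xi(x,y)\bigr)=\sum_{g\in G}\langle gx,y\rangle\,\chi(g^{-1})=\Bigl(\sum_{g\in G}\chi(g)\,\overline{\chi(g)}\Bigr)\langle x,y\rangle\,,
\]
where I use $\chi(g^{-1})=\overline{\chi(g)}$, which follows from the fact that $\chi$ is a character of a finite group (so $\chi(g)$ is a root of unity) and the compatibility of $\chi$ with the involutions noted at the beginning of subsection~\ref{sub:eval}.

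Since $|\chi(g)|^{2}=1$ for every $g\in G$, the bracketed sum collapses to $|G|$, and one obtains
\[
\chi\bigl(\xi(x,y)\bigr)=|G|\,\langle x,y\rangle
\]
on $H_n(M)_\chi$. This is the desired statement with the explicit positive constant $|G|$. The step that requires a moment's care is the sesquilinearity convention: the intersection form on $H_n(M;\C)$ is $(-1)^n$-Hermitian, hence $\C$-linear in the first slot and conjugate-linear in the second, which is precisely what is used to pull $\chi(g)$ out of $\langle gx,y\rangle$ without conjugation while $\chi(g^{-1})$ becomes $\overline{\chi(g)}$. No further geometric or topological input is needed — the proposition is a purely algebraic consequence of the eigenspace condition and the character relation, so I do not expect any real obstacle beyond bookkeeping the Hermitian conventions correctly.
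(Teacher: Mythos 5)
Your proposal is correct and follows essentially the same computation as the paper: expand $\chi(\xi(x,y))$ as a sum over $G$, use the eigenspace condition together with linearity in the first slot and $\chi(g^{-1})=\overline{\chi(g)}$, and collapse the sum to the constant $|G|$. The only cosmetic difference is that the paper channels the eigenspace condition through the idempotent $c_\chi$ (via $\sum_g\overline{\chi(g)}\,g\,c_\chi x=|G|\,c_\chi x$), whereas you apply $gx=\chi(g)x$ directly; these are equivalent by Lemma~\ref{lem: charac}.
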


\begin{proof}
Fix arbitrary elements~$c_\chi x, c_\chi y \in H_n(M)_\chi=c_\chi H_n(M;\C)$. Using the fact that~$\chi$ is a ring homomorphism, the definition of~$c_\chi$ and the
equality~$c_\chi c_\chi=c_\chi$, we get
\begin{align*}
\chi(\xi(c_\chi x, c_\chi y))&=\sum_{g\in G} \langle g c_\chi x, c_\chi y\rangle \overline{\chi(g)}= \langle \sum_{g\in G}\overline{\chi(g)} g c_\chi x,c_\chi y\rangle\\
	&=|G|\langle c_\chi c_\chi x, c_\chi y \rangle=|G|\langle c_\chi x, c_\chi y\rangle\,,
\end{align*}
and the proposition is proved.
\end{proof}

Proposition~\ref{prop: forms}, Proposition~\ref{prop: eval eingenspace} and Lemma~\ref{lem: eval} immediately yield the following corollary.

\begin{corollary}
\label{cor:eval}
Up to a positive multiplicative constant, a matrix of the restriction of the intersection form to~$H_n(M)_\chi$ is given by a matrix of the form~$\xi$
evaluated componentwise by~$\chi$.
\end{corollary}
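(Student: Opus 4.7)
The plan is simply to chain together the three preceding results in the obvious way, so the proof is essentially a bookkeeping exercise. I would proceed as follows.

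First, apply Proposition~\ref{prop: forms} to replace the intersection form restricted to $H_n(M)_\chi$ by $\chi\circ\xi$: the two agree up to a positive scalar, so it is enough to produce the desired matrix for $\chi\circ\xi$. Second, invoke Proposition~\ref{prop: eval eingenspace}, which gives a unitary isomorphism
\[
\Phi_H\colon \bigl(H_n(M;\C)\otimes_{\C[G]}\C_\chi,\,\xi^\chi\bigr)\;\longrightarrow\;\bigl(H_n(M)_\chi,\,\chi\circ\xi\bigr),\qquad x\otimes u\mapsto u\,c_\chi x.
\]
Thus any matrix for $\xi^\chi$ computed in a basis of the tensor product transports, via $\Phi_H$, to a matrix for $\chi\circ\xi$ in the corresponding basis $\{c_\chi v_i\}$ of $H_n(M)_\chi$. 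Third, apply Lemma~\ref{lem: eval}: in a basis of the form $v_i\otimes 1$, the matrix of $\xi^\chi$ is exactly the componentwise evaluation by $\chi$ of the matrix of $\xi$ in the basis $v_1,\ldots,v_m$. Combining the three steps yields the claim.

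The one point that needs a little care is the hypothesis of Lemma~\ref{lem: eval}, which assumes $H$ is free over $\C[G]$, whereas $H_n(M;\C)$ need not be free in general. There are two natural ways around this. One option is to observe that since $G$ is a finite abelian group, $\C[G]$ is semisimple, so $H_n(M;\C)$ is a direct sum of one-dimensional simple $\C[G]$-modules; picking a basis compatible with this decomposition still allows the matrix-evaluation statement to go through unchanged. Alternatively, and more cleanly, one simply notes that the proof of Lemma~\ref{lem: eval} really only uses the $\C[G]$-bilinearity of $\xi$ and the identity $\xi^\chi(v\otimes 1,w\otimes 1)=\chi(\xi(v,w))$, which holds for any elements $v,w\in H$; this identity alone suffices to conclude that if one writes the Gram matrix of $\xi$ with respect to any chosen set of module generators $v_1,\ldots,v_m$, its componentwise $\chi$-evaluation gives the Gram matrix of $\xi^\chi$ in the generating set $v_i\otimes 1$ (and projects to a matrix of $\chi\circ\xi$ in the generators $c_\chi v_i$ of $H_n(M)_\chi$).

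I do not expect any real obstacle here; the only substantive step is the freeness issue just discussed, and it is a minor technicality rather than a genuine difficulty. The proof will fit in three or four lines.
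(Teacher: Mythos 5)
Your proof is correct and follows exactly the route the paper takes: the paper's entire proof consists of the single remark that Proposition~\ref{prop: forms}, Proposition~\ref{prop: eval eingenspace} and Lemma~\ref{lem: eval} immediately yield the corollary. Your extra paragraph on the freeness hypothesis of Lemma~\ref{lem: eval} addresses a technicality the paper passes over in silence, and either of your two fixes (semisimplicity of $\C[G]$, or observing that only the identity $\xi^\chi(v\otimes 1,w\otimes 1)=\chi(\xi(v,w))$ is needed) is adequate.
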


Note that when~$n$ is even, these two forms are Hermitian and therefore have a well-defined (identical) signature. It is called the {\em $\chi$-signature} of~$M$,
and will be denoted by~$\sigma_\chi(M)$.
Since it is completely determined by an element~$\omega$ of~$\mathbb{T}^\mu$, we shall sometimes write~$\sigma_\omega(M)$ instead of~$\sigma_\chi(M)$, and call it
the~{\em $\omega$-signature} of~$M$.

The (non-)additivity of this signature is well-understood thanks to a famous theorem of C.T.C Wall~\cite{CTC}.
(Strictly speaking, Wall only stated and proved his result for ordinary signatures of manifolds; however, he did mention in~\cite[p.274]{CTC} that
it extends to~$G$-manifolds and~$G$-signatures.) This result holds for any even~$n$, but we shall restrict
ourselves to low-dimensional manifolds. To state this theorem, we need the following well-known consequence of Poincar\'e duality.

\begin{lemma}
\label{lem: duality}
Let~$X$ be a compact oriented~$3$-dimensional manifold-with-boundary endowed with the action of a finite abelian group~$G$.
Then, its boundary~$\Sigma=\partial X$ inherits an orientation and a~$G$-action from~$X$, and
the kernel of the map induced by the inclusion of~$\Sigma$ in~$X$ is a Lagrangian subspace of~$H_1(\Sigma)_\chi$ with respect to the intersection form.
\end{lemma}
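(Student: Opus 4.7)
The proof naturally splits into showing isotropy (easy, inherited from the classical topological lemma) and showing that the dimension is exactly half (requires equivariant Poincar\'e--Lefschetz duality). I first note that $\Sigma=\partial X$ carries the boundary orientation and the restricted $G$-action (which preserves orientation, as is implicit in this signature-theoretic setting). The inclusion $i\colon\Sigma\hookrightarrow X$ is $G$-equivariant, so $i_*\colon H_1(\Sigma;\C)\to H_1(X;\C)$ is $\C[G]$-linear and $L:=\ker(i_*)$ is a $\C[G]$-submodule. By Proposition~\ref{prop: nondeg}(1), $L_\chi=\ker\bigl((i_*)_\chi\bigr)$ is precisely the kernel of the map induced on $\chi$-eigenspaces.

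Isotropy of $L_\chi$ is then immediate: the classical half-lives-half-dies lemma says that $L$ is a Lagrangian subspace of $H_1(\Sigma;\C)$ for the intersection form, hence in particular isotropic, and the intersection form on $H_1(\Sigma)_\chi$ is by definition the restriction of the one on $H_1(\Sigma;\C)$.

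To upgrade isotropy to Lagrangian, I would argue that $\dim_\C L_\chi=\tfrac{1}{2}\dim_\C H_1(\Sigma)_\chi$. The long exact sequence of the pair $(X,\Sigma)$ in complex coefficients is $G$-equivariant, and by Proposition~\ref{prop: nondeg}(1) its $\chi$-eigenspace
\[
\cdots\to H_2(X)_\chi\to H_2(X,\Sigma)_\chi\to H_1(\Sigma)_\chi\to H_1(X)_\chi\to H_1(X,\Sigma)_\chi\to\cdots
\]
is still exact. Equivariant Poincar\'e--Lefschetz duality $H_k(X,\Sigma;\C)\cong H^{3-k}(X;\C)$, combined with the universal coefficient theorem over $\C$ and the unitarity of the $G$-action (a semisimple operator and its adjoint have eigenspaces of equal dimension on paired characters), then yields $\dim H_k(X,\Sigma;\C)_\chi=\dim H_{3-k}(X;\C)_\chi$. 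The standard rank count in the displayed $\chi$-sequence forces $2\dim L_\chi=\dim H_1(\Sigma)_\chi$.

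Finally, since the closed oriented surface $\Sigma$ has a non-degenerate intersection form on $H_1(\Sigma;\C)$, and $G$ acts on this space by unitary isomorphisms, Proposition~\ref{prop: nondeg}(3) gives non-degeneracy of the restricted form on $H_1(\Sigma)_\chi$. An isotropic subspace of half dimension in a non-degenerate Hermitian space is Lagrangian, which completes the proof. The main obstacle is the careful bookkeeping in the duality step: one must verify that the character $\chi$, and not its conjugate $\overline{\chi}$, is the correct one to track across the Poincar\'e--Lefschetz and universal coefficient isomorphisms, which ultimately comes down to the $G$-invariance of the fundamental class $[X,\partial X]$ and the compatibility of cap product with the $G$-action on cohomology.
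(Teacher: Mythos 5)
The paper does not actually prove this lemma --- it is stated as a ``well-known consequence of Poincar\'e duality'' --- so there is nothing internal to compare against; your write-up is the standard equivariant half-lives-half-dies argument, and its overall architecture (isotropy from the classical statement, non-degeneracy of the restricted form from Proposition~\ref{prop: nondeg}(3), and a half-dimension count via the exactness of the eigenspace functor applied to the sequence of the pair) is sound.

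The one step whose justification does not work as written is the rank equality. You correctly flag the $\chi$ versus $\overline{\chi}$ issue, but your proposed resolution (invariance of the fundamental class and equivariance of the cap product) only handles the Poincar\'e--Lefschetz step; the conjugate character genuinely appears at the universal coefficient step and cannot be made to disappear there. Concretely: with $G$ acting on cohomology by $g\mapsto (g^{-1})^*$, duality identifies $\partial_\chi$ with $(i^*)_\chi\colon H^1(X)_\chi\to H^1(\Sigma)_\chi$, but the natural isomorphism $H^1(Y;\C)\cong\mathrm{Hom}_\C(H_1(Y;\C),\C)$ sends the $\chi$-eigenspace of the dual to the dual of the $\overline{\chi}$-eigenspace, so $(i^*)_\chi$ is the transpose of $(i_*)_{\overline{\chi}}$ and one obtains $\mathrm{rank}(\partial_\chi)=\mathrm{rank}\bigl((i_*)_{\overline{\chi}}\bigr)$, not $\mathrm{rank}\bigl((i_*)_{\chi}\bigr)$. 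Your parenthetical appeal to unitarity does not close this (for a general unitary representation $\dim V_\chi\neq\dim V_{\overline{\chi}}$). What does close it is the real structure: $H_*(\,\cdot\,;\C)=H_*(\,\cdot\,;\R)\otimes\C$ carries an anti-linear conjugation commuting with $i_*$ and with the $G$-action, which maps $\chi$-eigenspaces bijectively onto $\overline{\chi}$-eigenspaces; hence $\mathrm{rank}\bigl((i_*)_{\overline{\chi}}\bigr)=\mathrm{rank}\bigl((i_*)_{\chi}\bigr)$ and the count $\dim L_\chi=\mathrm{rank}(\partial_\chi)=\mathrm{rank}\bigl((i_*)_\chi\bigr)=\dim H_1(\Sigma)_\chi-\dim L_\chi$ goes through. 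With that substitution the proof is complete.
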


Let~$M$ be an oriented compact~$4$-manifold endowed with the action of a finite abelian group~$G$ and let~$X_0$ be an oriented compact~$3$-manifold properly embedded
into~$M$, so that~$X_0$ intersects~$\partial M$ along~$\partial X_0=X_0 \cap \partial M$. Assume that~$X_0$ splits~$M$ into two manifolds~$M_1$ and~$M_2$. For~$i=1,2$, denote by~$X_i$ the compact~$3$-manifold~$\partial M_i\setminus\mathit{Int}(X_0)$. Orient~$X_1$ and~$X_2$ so that~$\partial M_1= X_0 \cup (-X_1)$
and~$\partial M_2=(-X_0) \cup X_2$. Note that the orientations of~$X_0$,~$X_1$ and~$X_2$ induce the same orientation on the
surface~$\Sigma=\partial X_0=\partial X_1 = \partial X_2$. By Lemma~\ref{lem: duality}, we know that given any~$\C$-algebra homomorphism~$\chi\colon\C[G]\rightarrow\C$,
the subspace~$(L_i)_\chi=\mathit{Ker}(H_1(\Sigma)_\chi \rightarrow H_1(X_i)_\chi)$ is Lagrangian in~$H_1(\Sigma)_\chi$ for~$i=0,1,2$.

\begin{theorem}[Wall~\cite{CTC}]
\label{thm: Wall}
Under the conditions above, the~$\chi$-signature of~$M$ is given by
\[
\sigma_\chi(M)=\sigma_\chi(M_1)+\sigma_\chi(M_2)+\mathit{Maslov}((L_1)_\chi,(L_0)_\chi,(L_2)_\chi).
\]
\end{theorem}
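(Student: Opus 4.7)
The plan is to follow Wall's original proof of the non-additivity of signatures, carrying the $G$-action through every step and then extracting the $\chi$-eigenspace at each algebraic stage. The enabling fact is that the homological tools involved --- Mayer-Vietoris, Poincar\'e-Lefschetz duality, and intersection theory --- are all $G$-equivariant, and that the functor $H \mapsto H_\chi$ is compatible with all of them thanks to Proposition~\ref{prop: nondeg}.

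First I would write the Mayer-Vietoris sequence of $M = M_1 \cup_{X_0} M_2$ with complex coefficients, together with the long exact sequences of the pairs $(M_i,\partial M_i)$; these are sequences of $\C[G]$-modules, and by Proposition~\ref{prop: nondeg}(1) extracting the $\chi$-eigenspace preserves exactness. By Proposition~\ref{prop: forms} and Corollary~\ref{cor:eval}, each $\chi$-signature in the statement equals the signature of the intersection form restricted to the appropriate $\chi$-eigenspace of $H_2$, so the problem reduces to a purely linear-algebraic comparison of Hermitian forms on complex vector spaces linked by these exact sequences. The image of $H_2(M_1;\C)_\chi \oplus H_2(M_2;\C)_\chi$ in $H_2(M;\C)_\chi$ carries a restricted intersection form whose signature is $\sigma_\chi(M_1) + \sigma_\chi(M_2)$, and its orthogonal complement should be identified, via the Mayer-Vietoris connecting homomorphism and duality, with a quotient of the space $\{(v_0,v_1,v_2) \in (L_0)_\chi \oplus (L_1)_\chi \oplus (L_2)_\chi : v_0 + v_1 + v_2 = 0\}$, in such a way that the restricted intersection form matches the Hermitian pairing from the equivalent description of the Maslov index recalled after Definition~\ref{def:Maslov}.

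The main obstacle is precisely this last identification. The diagram chase is essentially Wall's original argument run equivariantly, but one must pin down orientation and sign conventions for $X_0, X_1, X_2$, for the ordering of the three Lagrangians, and for the direction of the connecting homomorphism so that intersection numbers in $M$ correspond exactly --- without a stray sign flip or complex conjugation --- to the defining Hermitian pairing of $\mathit{Maslov}((L_1)_\chi,(L_0)_\chi,(L_2)_\chi)$. Once these conventions are locked in consistently, as in Wall's original bookkeeping, summing the signature on the image with that on its orthogonal complement yields the claimed formula.
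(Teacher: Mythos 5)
The paper does not actually prove this statement: it is quoted directly from Wall~\cite{CTC}, with the remark that Wall stated and proved only the ordinary-signature case but indicated that the argument extends to $G$-manifolds and $G$-signatures. Your plan---run Wall's proof equivariantly and pass to $\chi$-eigenspaces throughout, using the exactness of $H\mapsto H_\chi$ from Proposition~\ref{prop: nondeg} and the comparison of the restricted intersection form with $\chi\circ\xi$ from Proposition~\ref{prop: forms} and Corollary~\ref{cor:eval}---is exactly the justification the paper has in mind, so the overall strategy is the right one.

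As written, however, the proposal is an outline that defers precisely the content of the theorem. Two steps are asserted rather than proved. First, the claim that the intersection form restricted to the image of $H_2(M_1;\C)_\chi\oplus H_2(M_2;\C)_\chi$ in $H_2(M;\C)_\chi$ has signature $\sigma_\chi(M_1)+\sigma_\chi(M_2)$ is itself the equivariant form of Novikov additivity: one must check that classes supported in $M_1$ and in $M_2$ pair trivially, and that the radical of the form on $H_2(M_i;\C)_\chi$ (which is controlled by Poincar\'e--Lefschetz duality for $(M_i,\partial M_i)$) is killed in the image, so that no signature is lost or gained. Second---and this is the heart of Wall's theorem---the identification of the orthogonal complement of that image with the space $\{(v_0,v_1,v_2)\in (L_0)_\chi\oplus(L_1)_\chi\oplus(L_2)_\chi \mid v_0+v_1+v_2=0\}$ modulo its radical, and the matching of the restricted intersection form with the Hermitian form defining $\mathit{Maslov}((L_1)_\chi,(L_0)_\chi,(L_2)_\chi)$, is exactly what you label ``the main obstacle'' and then do not carry out. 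Saying that the spaces ``should be identified'' and that Wall's bookkeeping ``once locked in'' yields the formula is a description of the theorem, not a proof of it: one must explicitly construct the relative $2$-cycles in $M_1$ and $M_2$ whose boundaries in $\Sigma$ realize a given triple $(v_0,v_1,v_2)$, glue them along $X_0$, and compute their intersection numbers in $M$, verifying in the process the orientation conventions $\partial M_1=X_0\cup(-X_1)$, $\partial M_2=(-X_0)\cup X_2$ and the ordering $(L_1)_\chi,(L_0)_\chi,(L_2)_\chi$ that the statement uses. Until that computation is done, the formula---including its signs---is unestablished.
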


As in~\cite{GG}, this result will be one of the main tools in the proof of our result. We will often refer to it as the {\em Novikov-Wall theorem\/}.

\subsection{The isotropic functor}
\label{sub:isotr}

Given a fixed torsion element~$\omega$ in~$\mathbb{T}^\mu$, the aim of this subsection is now to define a functor~$\mathcal{F}_\omega\colon\mathbf{Tangles}_\mu \rightarrow\mathbf{Isotr}_{\C}$
using branched coverings. We will then give a sufficient condition for this functor to take its values in the Lagrangian category~$\mathbf{Lagr}_\C$.

\medbreak

Given a positive integer~$n$, recall that~$x_j$ denotes the point~$((2j-n-1)/n,0)$ in the closed unit disk~$D^2$, for~$j=1,\dots,n$.
Let~$\mathcal{N}(\lbrace x_1,\dots,x_n \rbrace)$ be an open tubular neighborhood of~$\lbrace x_1,\dots,x_n \rbrace$.
Given a map~$c\colon\lbrace 1,\dots,n \rbrace \rightarrow \lbrace \pm1,\dots,\pm \mu \rbrace$, we shall denote by~$D_c$ the compact surface
\[
D_c=D^2\setminus\mathcal{N}(\lbrace x_1,\dots,x_n \rbrace)
\]
endowed with the counterclockwise orientation and a basepoint~$z$. The same space with the clockwise orientation will be denoted by~$-D_c$. The fundamental group~$\pi_1(D_c,z)$ is freely generated by~$\lbrace e_1,\dots,e_n \rbrace$, where~$e_j$ is a simple loop turning once around~$x_j$ counterclockwise if~$\text{sgn}(c_j)=1$, clockwise if~$\text{sgn}(c_j)=-1$. 

Fix a torsion element~$\omega=(\omega_1,\dots,\omega_\mu)\in\mathbb{T}^\mu$, let~$k_i$ be the order of~$\omega_i$ and~$G$ be the finite abelian
group~$C_{k_1} \times\dots\times C_{k_\mu}$. Also, let~$C^\mu_\infty$ be the (multiplicative) free abelian group with basis~$t_1,\dots,t_\mu$.
Composing the coloring induced homomorphism~$H_1(D_c) \rightarrow C_\infty^\mu, \ e_j \mapsto t_{|c_j|}$ with the canonical projection~$C_\infty^\mu \rightarrow G$ yields a regular
$G$-covering~$\widehat{D}_c \rightarrow D^2$ branched along the punctures. The homology group~$H_1(\widehat{D}_c;\C) $ is endowed with a structure of module over~$\C[G]$.
Let~$\langle \ , \ \rangle_c\colon H_1(\widehat{D}_c;\C) \times H_1(\widehat{D}_c;\C) \rightarrow \C $ be the (skew-Hermitian) intersection form obtained by lifting the orientation of~$D_c$
to~$\widehat{D}_c$. Restricting this form to the generalized eigenspace~$H_1(\widehat{D}_c)_\omega$ (recall subsection~\ref{sub:eval}) turns the latter into a Hermitian complex vector space.

Given a colored tangle~$\tau \in T_\mu(c,c')$ with~$m$ components, denote by~$\mathcal{N}(\tau)$ an open tubular neighborhood of~$\tau$, and let
\[
X_\tau=(D^2 \times [0,1])\setminus\mathcal{N}(\tau)
\]
be the exterior of~$\tau$. We shall orient~$X_\tau$ so that the induced orientation on~$\partial X_\tau$ extends the orientation on~$(-D_c) \sqcup D_{c'}$.
The long exact sequence of the pair~$(D^2\times [0,1],X_\tau)$, excision and duality yield~$H_1(X_\tau)=\bigoplus_{j=1}^m \Z m_j$,
where~$m_j$ is a meridian linking once the~$j^{\mathit{th}}$ component of~$\tau$.

Composing the coloring induced homomorphism~$H_1(X_\tau) \rightarrow C_\infty^\mu, \ m_j \mapsto t_{|c_j|}$ with the canonical projection~$C_\infty^\mu \rightarrow G$
yields a regular~$G$-covering~$p\colon \widehat{X}_\tau \rightarrow D^2 \times [0,1]$ branched along~$\tau$.
Let~$\hat{\imath}_\tau\colon H_1(\widehat{D}_c)_{\omega} \rightarrow H_1(\widehat{X}_\tau)_{\omega}$ and~$\hat{\imath}_\tau'\colon H_1(\widehat{D}_{c'})_{\omega} \rightarrow H_1(\widehat{X}_\tau)_{\omega}$ be the homomorphisms induced by the inclusions of~$\widehat{D}_c$ and~$\widehat{D}_{c'}$ in~$\widehat{X}_\tau$.
Finally, let~$\hat{\jmath}_\tau$ be the homomorphism
\[
\hat{\jmath}_\tau\colon H_1(\widehat{D}_c)_{\omega}\oplus H_1(\widehat{D}_{c'})_{\omega} \rightarrow H_1(\widehat{X}_\tau)_{\omega}
\]
given by~$\hat{\jmath}_\tau(x,x')=\hat{\imath}_\tau'(x')-\hat{\imath}_\tau(x)$.

\begin{theorem}
\label{thm:isotropicfunctor}
Fix a torsion element~$\omega$ in~$\mathbb{T}^\mu$. Let~$\mathcal{F}_\omega$ assign to each map~$c\colon\lbrace 1,\dots,n \rbrace \rightarrow \lbrace\pm 1,\dots,\pm \mu \rbrace$
the pair~$(H_1(\widehat{D}_c)_{\omega}, \langle \ , \ \rangle_c)$ and to each tangle~$\tau \in T_\mu(c,c')$ the subspace~$\mathit{Ker}(\hat{\jmath}_\tau)$
of~$H_1(\widehat{D}_c)_{\omega} \oplus H_1(\widehat{D}_{c'})_{\omega}$. Then~$\mathcal{F}_\omega $ is a functor~$\mathbf{Tangles}_\mu \rightarrow \mathbf{Isotr}_{\C}$ which fits in the commutative diagram
\[
\begin{xy}
(0,15)*+{\mathbf{Braids}_\mu}="a";
(30,15)*+{\mathbf{Tangles}_\mu}="c";
{\ar@{->}^-{} "a";"c"};
(0,0)*+{\mathbf{\widetilde{U}}_\C}="g";
(30,0)*+{\mathbf{Isotr}_\C\,,}="h";
{\ar@{->}^-{\Gamma} "g";"h"};
{\ar@{->}^{} "a";"g"};
{\ar@{->}^{\mathcal{F}_\omega} "c";"h"};
\end{xy} 
\]
where the horizontal arrows are the embeddings of categories described in Sections~\ref{sub:tangle} and~\ref{sub:cat}. 
\end{theorem}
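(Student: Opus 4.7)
The argument naturally splits into three parts: (a) checking that $\mathcal{F}_\omega$ is well-defined on objects and on isotopy classes, (b) verifying isotropy of $\ker\hat\jmath_\tau$ together with functoriality (identity and composition), and (c) identifying $\mathcal{F}_\omega(\alpha)$ with the graph of a unitary automorphism when $\alpha$ is a braid. For part (a), I would first note that $G$ acts on $\widehat{D}_c$ by orientation-preserving homeomorphisms and hence by isometries of the intersection pairing on $H_1(\widehat{D}_c;\C)$, so Proposition~\ref{prop: forms} guarantees that the restriction $\langle\ ,\ \rangle_c$ to the $\omega$-generalized eigenspace is a well-defined skew-Hermitian complex form, placing $\mathcal{F}_\omega(c)$ in $\mathbf{Isotr}_\C$. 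Isotopy invariance of $\ker\hat\jmath_\tau$ is then standard: an ambient isotopy of $\tau$ relative to $D^2\times\{0,1\}$ lifts $G$-equivariantly to $\widehat X_\tau$ and induces the identity on the boundary pieces $\widehat D_c$ and $\widehat D_{c'}$.

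The crux is the isotropy statement, and this is the step I expect to be the main technical obstacle. My plan is to realise $H_1(\widehat D_c)_\omega\oplus H_1(\widehat D_{c'})_\omega$ inside $H_1(\partial\widehat X_\tau)_\omega$ by decomposing the boundary of the branched cover as $\partial\widehat X_\tau=(-\widehat D_c)\cup\widehat S_\tau\cup\widehat D_{c'}$, where $\widehat S_\tau$ is the unbranched cover of the ``side'' annulus $\partial D^2\times[0,1]$. Applying the $3$-dimensional Lemma~\ref{lem: duality} to the $G$-manifold $\widehat X_\tau$ then shows that $\ker\bigl(H_1(\partial\widehat X_\tau)_\omega\to H_1(\widehat X_\tau)_\omega\bigr)$ is Lagrangian for the intersection form on the closed surface $\partial\widehat X_\tau$. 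It then remains to verify two compatibilities: that the pullback of this intersection form to $H_1(\widehat D_c)_\omega\oplus H_1(\widehat D_{c'})_\omega$ along inclusion is exactly $(-\langle\ ,\ \rangle_c)\oplus\langle\ ,\ \rangle_{c'}$, and that the map $(x,x')\mapsto -[x]+[x']$ sends $\ker\hat\jmath_\tau$ into the Lagrangian kernel above. The delicate point is controlling the contribution of $\widehat S_\tau$: its homology is supported on circles lying in the image of $H_1(\widehat D_c)_\omega\oplus H_1(\widehat D_{c'})_\omega$, and these boundary circles should vanish in the $\omega$-eigenspace once $\omega$ has finite order. This should follow from a Mayer--Vietoris argument on the branched cover combined with exactness of the eigenspace functor (Proposition~\ref{prop: nondeg}).

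Once isotropy is in hand, functoriality is a chase through Mayer--Vietoris. The identity tangle gives $\widehat X_{\mathit{id}_c}=\widehat D_c\times[0,1]$, so $\hat\imath$ and $\hat\imath'$ agree and $\ker\hat\jmath_{\mathit{id}_c}=\Delta_{\mathcal F_\omega(c)}$. For composition, decomposing $\widehat X_{\tau_2\tau_1}=\widehat X_{\tau_1}\cup_{\widehat D_{c'}}\widehat X_{\tau_2}$ and applying the Mayer--Vietoris sequence, passed to $\omega$-eigenspaces via Proposition~\ref{prop: nondeg}, yields that a pair $(x,x'')$ lies in $\ker\hat\jmath_{\tau_2\tau_1}$ exactly when there exists $x'\in H_1(\widehat D_{c'})_\omega$ with $(x,x')\in\ker\hat\jmath_{\tau_1}$ and $(x',x'')\in\ker\hat\jmath_{\tau_2}$, which is the composition in $\mathbf{Isotr}_\C$. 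Finally, for part (c), when $\alpha\in B_c$ is a braid the exterior $X_\alpha$ fibres over $[0,1]$ with fibre $D_c$, so $\widehat X_\alpha$ is a mapping cylinder of a $G$-equivariant self-homeomorphism of $\widehat D_c$ whose induced unitary map on $H_1(\widehat D_c)_\omega$ is precisely the evaluation $\mathcal B_\omega(\alpha)$ of the colored Gassner representation. A direct computation of $\hat\imath_\alpha$ and $\hat\imath'_\alpha$ in terms of this monodromy then identifies $\ker\hat\jmath_\alpha$ with the graph $\Gamma_{\mathcal B_\omega(\alpha)}$, closing the commutative diagram.
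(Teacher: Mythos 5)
Your proposal follows essentially the same route as the paper: the same factorization of $\hat{\jmath}_\tau$ through $H_1(\partial\widehat{X}_\tau)_{\omega}$, the same appeal to Lemma~\ref{lem: duality} to make $\mathit{Ker}(\varphi)$ Lagrangian and the identification of the pulled-back intersection form with $(-\xi)\oplus\xi'$, and the same Mayer--Vietoris and mapping-cylinder arguments (which the paper delegates to~\cite{CT}) for composition and for the braid case. One clarification: the step you flag as delicate --- controlling $H_1(\widehat{S}_\tau)$ and arguing that the lifted boundary circles vanish in the $\omega$-eigenspace --- is not needed for isotropy and is in fact false for general $\omega$ and $c$; that kind of control (via the coprimality hypotheses of Lemma~\ref{lemma: boundary}) only enters later, in Proposition~\ref{prop:FLagr}, where one upgrades isotropic to Lagrangian by showing $i$ is an isomorphism. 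For the theorem as stated, it suffices that $\hat{\jmath}_\tau$ factors through $H_1(\partial\widehat{X}_\tau)_{\omega}$ and that the form pulls back to $(-\xi)\oplus\xi'$ on the images of $H_1(\widehat{D}_c)_{\omega}$ and $H_1(\widehat{D}_{c'})_{\omega}$, regardless of what $\widehat{S}_\tau$ contributes.
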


\begin{proof}
By the discussion above, for any object~$c$ of~$\mathbf{Tangles}_\mu$,~$\mathcal{F}_\omega(c)$ is a Hermitian complex vector space,
i.e. an object of~$\mathbf{Isotr}_{\C}$.

Now, fix a~$\mu$-colored tangle~$\tau\in T_\mu(c,c')$ and let us check that~$\mathcal{F}_\omega(\tau)$ is an isotropic subspace of~$(-H_1(\widehat{D}_c)_{\omega})\oplus H_1(\widehat{D}_c)_{\omega}$. During this discussion, we shall denote by~$\xi$ (respectively~$\xi'$) the skew-Hermitian intersection form on~$H_1(\widehat{D}_c)_{\omega}$
(respectively~$H_1(\widehat{D}_{c'})_{\omega}$).
Recall that we oriented~$X_\tau$ so that the orientation of~$\partial X_\tau $ extends the one of~$(-D_c)\sqcup D_{c'}$. Consequently,
the composition of the form~$\Omega$ on~$H_1(\partial \widehat{X}_\tau)_{\omega}$ with the homomorphism induced by the
inclusion~$(-\widehat{D}_c)\sqcup\widehat{D}_{c'}\subset\partial \widehat{X}_\tau$ is equal to~$(-\xi)\oplus\xi'$. Observe that the map~$\hat{\jmath}_\tau$ is given by the composition 
\[
H_1(\widehat{D}_c)_{\omega}\oplus H_1(\widehat{D}_{c'})_{\omega}\stackrel{\psi}{\longrightarrow}
H_1(\widehat{D}_c)_{\omega}\oplus H_1(\widehat{D}_{c'})_{\omega}\stackrel{i}{\longrightarrow}
H_1(\partial \widehat{X}_\tau)_{\omega}\stackrel{\varphi}{\longrightarrow}H_1(\widehat{X}_\tau)_{\omega}\,,
\]
where~$\psi=(-\mathit{id})\oplus\mathit{id}$ while~$i$ and~$\varphi$ are the inclusion induced maps. Writing~$L:=\mathit{Ker}(\varphi\circ i)$, we
find~$\mathit{Ker}(\hat{\jmath}_\tau)=\psi(L)$ and consequently~$\mathrm{Ann}(\mathit{Ker}(\hat{\jmath}_\tau))=\mathrm{Ann}((\psi(L))=\psi(\mathrm{Ann}(L))$.
Let us now check that~$L$ is isotropic. Given~$x,y \in L$, the elements~$i(x)$ and~$i(y)$ belong to~$\mathit{Ker}(\varphi)$ which is known to be Lagrangian by Lemma~\ref{lem: duality}.
As the form~$\Omega$ ``restricts to''~$(-\xi) \oplus \xi' $ on~$H_1((-\widehat{D}_c) \sqcup \widehat{D}_{c'})_{\omega}$, we get
\[
((-\xi) \oplus \xi' )(x,y)= \Omega(i(x),i(y)) =0
\]
as desired. Combining these observations, it follows that
\[
\mathit{Ker}(\hat{\jmath}_\tau)=\psi(L) \subset \psi(\mathrm{Ann}(L))=\mathrm{Ann}(\mathit{Ker}(\hat{\jmath}_\tau))\,,
\]
which shows that~$\mathcal{F}_\omega(\tau)=\mathit{Ker}(\hat{\jmath}_\tau)$ is isotropic. 

The proof of the functoriality follows by restricting the arguments given in~\cite[Lemma 3.4]{CT} to generalized eigenspaces. (Recall that the first point of
Proposition~\ref{prop: nondeg} ensures that exactness is preserved.) Finally, the restriction of this functor to braids
can be analysed by a straightforward adaptation of the proof of~\cite[Proposition 5.1]{CT}.
\end{proof}

Our next goal is to find a sufficient condition for the functor~$\mathcal{F}_\omega$ to take its values in the Lagrangian category~$\mathbf{Lagr}_\C$.
Given~$\omega\in\mathbb{T}^\mu$, let~$\mathbf{Tangles}_\mu^\omega$ be the full subcategory of~$\mathbf{Tangles}_\mu$ whose objects are
maps~$c\colon\lbrace 1,\dots,n \rbrace \rightarrow \lbrace \pm 1,\dots, \pm \mu \rbrace$ such that~$\ell(c)$ is nowhere zero
and such that~$\omega$ belongs to~$\mathbb{T}^\mu_c$ (recall subsection~\ref{sub:thm}).
We shall denote by~$T_\mu^\omega(c,c')$ the set of morphisms between two objects~$c$ and~$c'$ of this category.

\begin{lemma}
\label{lemma: boundary}
Fix~$\omega$ in~$\mathbb{T}^\mu_P$. If~$c$ is an object of~$\mathbf{Tangles}_\mu^\omega$, then the surface~$\widehat{D}_c $ has one boundary component,
and the restriction of the skew-Hermitian intersection form to~$H_1(\widehat{D}_c)_{\omega}$ is non-degenerate.
\end{lemma}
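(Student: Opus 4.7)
The plan is to address the two assertions in order, reducing the non-degeneracy statement to the standard fact that a compact orientable surface with a single boundary component has non-degenerate intersection form.

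First I would prove that $\widehat D_c$ has one boundary component. Since the branching locus of the covering $\widehat D_c\to D^2$ consists of the interior punctures $x_1,\dots,x_n$, the boundary $\partial\widehat D_c$ is exactly the preimage of $\partial D^2$. The number of connected components of this preimage equals $|G|$ divided by the order of the monodromy element associated with $\partial D^2$. Orienting $\partial D^2$ counterclockwise, that loop is homotopic in $D_c$ to the ordered product $\widetilde e_1\cdots\widetilde e_n$ of counterclockwise meridians around the $x_j$, and therefore maps to
\[
\prod_{j=1}^{n}t_{|c_j|}^{\mathrm{sgn}(c_j)}=\prod_{i=1}^{\mu}t_i^{\ell(c)_i}
\]
in $C_\infty^\mu$, and hence to the element $\bigl(\ell(c)_1\bmod k_1,\dots,\ell(c)_\mu\bmod k_\mu\bigr)$ in $G=C_{k_1}\times\cdots\times C_{k_\mu}$. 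The hypothesis $\omega\in\mathbb T^\mu_c$ says $\gcd(\ell(c)_i,k_i)=1$ for every $i$, so the $i$-th coordinate has order $k_i$; the hypothesis $\omega\in\mathbb T^\mu_P$ says the $k_i$ are pairwise coprime, so the total order of the element equals $\mathrm{lcm}(k_1,\dots,k_\mu)=k_1\cdots k_\mu=|G|$. The monodromy therefore generates $G$, and the preimage of $\partial D^2$ is a single circle.

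For the second assertion, once we know $\widehat D_c$ is a compact oriented surface with a single boundary component, it is homeomorphic to $\Sigma_{g,1}$ for some genus $g$. By a classical argument (the boundary curve is a product of commutators, so it vanishes in $H_1$), the intersection pairing on $H_1(\widehat D_c;\C)$ is the standard symplectic form on a free module of rank $2g$, and is in particular non-degenerate. The action of $G$ on $\widehat D_c$ is by orientation-preserving homeomorphisms, so $G$ acts by unitary isomorphisms on the Hermitian space $(H_1(\widehat D_c;\C),\langle\,,\,\rangle_c)$. The third part of Proposition~\ref{prop: nondeg} then applies and yields that the restriction of $\langle\,,\,\rangle_c$ to the generalized eigenspace $H_1(\widehat D_c)_\omega$ is non-degenerate.

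The only delicate point is the first step: one must correctly compute the monodromy of the outer boundary and then use \emph{both} hypotheses (the coprimality encoded by $\mathbb T^\mu_c$ and the pairwise coprimality encoded by $\mathbb T^\mu_P$) to conclude that this monodromy generates the full group $G$. The non-degeneracy statement is then essentially a formal consequence via Proposition~\ref{prop: nondeg}(3).
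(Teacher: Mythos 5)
Your proposal is correct and follows essentially the same route as the paper: compute the monodromy of $\partial D^2$ as the class of $\ell(c)$ in $G$, use the two coprimality hypotheses (via the Chinese remainder theorem) to see that it generates $G$ so that $\partial\widehat D_c$ is connected, deduce non-degeneracy of the intersection form on $H_1(\widehat D_c;\C)$ from the single boundary component, and conclude with Proposition~\ref{prop: nondeg}(3). Your explicit count of boundary components as $|G|$ divided by the order of the monodromy is just a slightly more quantitative phrasing of the paper's ``connected iff the homomorphism is surjective.''
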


\begin{proof}
As the branch set of the covering is contained in the interior of the disk~$D^2$, the composition~$\pi_1(\partial D^2)\to\pi_1(D_c)\to G$
induces a regular~$G$-covering. Consequently, the boundary of the total space~$\widehat{D}_c$ has one component if and only if this homomorphism is surjective.
In other words, the connectedness of~$\partial\widehat{D}_c$ is equivalent to the image of the generator of~$\pi_1(\partial D^2)$ spanning~$G$. The image of the generator
of~$\pi_1(\partial D^2)$ in~$\pi_1(D_c)$ goes once around each of the punctures, so it is sent by the above composition to the class of~$\ell(c)$ in~$G$.
Using the Chinese remainder theorem, this element generates~$G$ if and only if the~$k_i>1$'s are pairwise coprime (i.e.~$\omega$ belongs to~$\mathbb{T}^\mu_P$), 
and~$\ell(c)_i$ and~$k_i$ are coprime for each~$i$ (i.e. $c$ is an object of~$\mathbf{Tangles}_\mu^\omega$). In this case,~$\widehat{D}_c$ has one boundary component, so the intersection form on~$H_1(\widehat{D}_c;\C)$ is non-degenerate. The last claim follows
from the third part of Proposition~\ref{prop: nondeg}.
\end{proof}

Lemma~\ref{lemma: boundary} gives a sufficient condition for the functor~$\mathcal{F}_\omega$ to take its values in the Lagrangian category~$\mathbf{Lagr}_\C$.

\begin{proposition}
\label{prop:FLagr}
Fix an element~$\omega$ in~$\mathbb{T}_P^\mu$ (i.e. assume that the component~$\omega_i$ of~$\omega$ is of order~$k_i>1$ with these~$k_i$'s pairwise coprime).
Then the restriction of~$\mathcal{F}_\omega$ to~$\mathbf{Tangles}_\mu^\omega$ defines a functor which fits in the commutative diagram 
\[
\begin{xy}
(0,15)*+{\mathbf{Braids}^\omega_\mu}="a";
(30,15)*+{\mathbf{Tangles}^\omega_\mu}="c";
{\ar@{->}^-{} "a";"c"};
(0,0)*+{\mathbf{U}_\C}="g";
(30,0)*+{\mathbf{Lagr}_\C.}="h";
{\ar@{->}^-{\Gamma} "g";"h"};
{\ar@{->}^{} "a";"g"};
{\ar@{->}^{\mathcal{F}_\omega} "c";"h"};
\end{xy} 
\]
\end{proposition}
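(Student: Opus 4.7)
The non-degeneracy of $\mathcal{F}_\omega(c)$ for any $c \in \mathbf{Tangles}_\mu^\omega$ is precisely the content of Lemma~\ref{lemma: boundary}, so every object of $\mathbf{Tangles}_\mu^\omega$ is mapped to a genuine object of $\mathbf{Lagr}_\C$. The substantive part of the proof is to upgrade, for each morphism $\tau \in T_\mu^\omega(c,c')$, the isotropic subspace $\mathcal{F}_\omega(\tau) = \mathit{Ker}(\hat{\jmath}_\tau) = \psi(L)$ from Theorem~\ref{thm:isotropicfunctor} to a Lagrangian subspace of $(-\mathcal{F}_\omega(c)) \oplus \mathcal{F}_\omega(c')$.

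To this end, I would first observe that the hypotheses on $\omega$ and $c$ force $\partial \widehat{X}_\tau$ to be a closed connected orientable surface: the lateral cover $C$ of the annulus $\partial D^2 \times [0,1]$ is classified by sending its meridional generator to $\ell(c) = \ell(c') \in G$, which generates $G$ by the argument of Lemma~\ref{lemma: boundary}, so $C$ is a connected annulus closing $-\widehat{D}_c$ and $\widehat{D}_{c'}$ into a closed surface. In particular the intersection form on $H_1(\partial \widehat{X}_\tau ; \C)$ is non-degenerate, and by Proposition~\ref{prop: nondeg}(3) so is its restriction to the $\omega$-eigenspace; applying Lemma~\ref{lem: duality} to $\widehat{X}_\tau$ then yields that $L' := \mathit{Ker}(\varphi)$ is Lagrangian in $H_1(\partial \widehat{X}_\tau)_\omega$. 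It now suffices to show that $i \colon H_1(\widehat{D}_c)_\omega \oplus H_1(\widehat{D}_{c'})_\omega \to H_1(\partial \widehat{X}_\tau)_\omega$ is an isometric isomorphism, for then $L = i^{-1}(L')$ is Lagrangian, and so is $\mathcal{F}_\omega(\tau) = \psi(L)$ because $\psi = (-\mathit{id}) \oplus \mathit{id}$ is an isometry of the form $(-\xi) \oplus \xi'$. The isomorphism property of $i$ is extracted from the Mayer--Vietoris sequence of the decomposition $\partial \widehat{X}_\tau = (-\widehat{D}_c \sqcup \widehat{D}_{c'}) \cup C$: both $C$ and $\partial C$ are (disjoint unions of) annuli and circles on which the deck group $G$ acts by rotations, hence trivially on $H_*(\cdot ; \C)$; since the character corresponding to $\omega \in \mathbb{T}_P^\mu$ is non-trivial, Proposition~\ref{prop: nondeg}(1) forces $H_*(C ; \C)_\omega$, $H_*(\partial C ; \C)_\omega$ and the relevant $H_0$-contributions from $\widehat{D}_c \sqcup \widehat{D}_{c'}$ all to vanish, collapsing the Mayer--Vietoris sequence into the desired isomorphism.

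Over the field $\C$, the operation $A \mapsto \overline{A}$ on subspaces is trivial, so composition in $\mathbf{Lagr}_\C$ coincides with composition in $\mathbf{Isotr}_\C$; the functoriality of $\mathcal{F}_\omega |_{\mathbf{Tangles}_\mu^\omega}$ is therefore inherited directly from Theorem~\ref{thm:isotropicfunctor}. The commutativity of the braid square is analogous: the restriction of $\mathcal{F}_\omega$ to $\mathbf{Braids}_\mu^\omega$ already lands in graphs of unitary isomorphisms by Theorem~\ref{thm:isotropicfunctor}, and the non-degeneracy from the first step elevates each such graph from $\widetilde{\mathbf{U}}_\C$ to $\mathbf{U}_\C$. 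The main obstacle is the Mayer--Vietoris analysis in the second step: one must carefully track the trivial $G$-action on the homology of the lateral pieces in order to force the vanishing on $\omega$-eigenspaces that makes $i$ an isomorphism, and once this is settled, the Lagrangian property follows painlessly from Lemma~\ref{lem: duality} together with the isometry property of $\psi$.
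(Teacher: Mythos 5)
Your proposal is correct and follows essentially the same route as the paper: reduce to showing that $\mathit{Ker}(\varphi\circ i)$ is Lagrangian, invoke Lemma~\ref{lem: duality} for $\mathit{Ker}(\varphi)$, and use the connectedness of the lateral cylinder $p^{-1}(S^1\times[0,1])$ (via Lemma~\ref{lemma: boundary}) to see that $i$ is an isometric isomorphism. Your Mayer--Vietoris computation on $\omega$-eigenspaces is just a more detailed rendering of the paper's one-line surface-topology argument, and your explicit remarks on composition over the field $\C$ and on the braid square are correct but already implicit in the paper.
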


\begin{proof}
Let~$\tau \in T_\mu^\omega(c,c')$ be a colored tangle. Applying the same notation and reasoning as in the proof of Theorem~\ref{thm:isotropicfunctor}, we only need to show
that~$\mathit{Ker}(\varphi\circ i)$ is Lagrangian. Since~$\mathit{Ker}(\varphi)$ is Lagrangian by Lemma~\ref{lem: duality}, we are left with the proof that~$i$ is an isomorphism.
To check this claim, note that
\[
\partial \widehat{X}_\tau= (\widehat{D}_c \sqcup \widehat{D}_{c'}) \cup p^{-1}(S^1 \times [0,1])\,,
\]
where~$p^{-1}(S^1 \times [0,1])$ consists of a  certain number of disjoint cylinders. As~$\omega$ belongs to~$\mathbb{T}^\mu_P$ and~$\tau$ to~$T_\mu^\omega(c,c')$,
Lemma~\ref{lemma: boundary} implies that both~$\widehat{D}_c$ and~$\widehat{D}_{c'} $ have a single boundary component. Consequently,~$p^{-1}(S^1 \times [0,1])$ consists of a single cylinder, so the inclusion~$\widehat{D}_c\sqcup \widehat{D}_{c'}\hookrightarrow\widehat{X}_\tau$ induces an isomorphism on the first homology groups, as claimed. 
\end{proof}

\begin{remark}
\label{rem: Fcharac}
Fix an element~$\omega$ in~$\mathbb{T}_P^\mu$ and a tangle~$\tau\in T_\mu^\omega(c,c')$. The proof of the previous proposition shows that
\[
\mathcal{F}_\omega(\tau) = \psi(\mathit{Ker}(j))\,,
\]
where~$\psi$ is the unitary automorphism of~$H_1(\widehat{D}_c)_{\omega} \oplus H_1(\widehat{D}_{c'})_{\omega}$ given by~$\psi=(-\mathit{id})\oplus\mathit{id}$
while~$j$ is the inclusion induced homomorphism
\[
j\colon H_1(\widehat{D}_c)_{\omega} \oplus H_1(\widehat{D}_{c'})_{\omega}\cong H_1(\partial \widehat{X}_\tau)_{\omega}\to H_1(\widehat{X}_\tau)_{\omega}\,.
\]
This characterization will help us later on for using the Novikov-Wall theorem.
\end{remark}

We conclude this section with one last property of this functor, namely the fact that it behaves well with respect to juxtaposition of colored tangles.

\begin{proposition}
\label{prop: Fdisjoint}
For any~$\omega \in \mathbb{T}_P^\mu$,~$\tau_1 \in T^\omega_\mu(c_1,c_1')$ and~$\tau_2 \in T^\omega_\mu(c_2,c_2')$, we have
\begin{enumerate}
\itemsep0em
\item $H_1(\widehat{D}_{c_1 \sqcup c_2})_{\omega} \cong H_1(\widehat{D}_{c_1})_{\omega} \oplus H_1(\widehat{D}_{c_2})_{\omega} \oplus A$ as Hermitian complex vector spaces,
\item $\mathcal{F}_\omega(\tau_1 \sqcup \tau_2) \cong \mathcal{F}_\omega(\tau_1) \oplus \mathcal{F}_\omega(\tau_2) \oplus \Delta_A$,
\end{enumerate}
where~$A$ is some subspace of~$H_1(\widehat{D}_{c_1 \sqcup c_2})_{\omega}$ and~$\Delta_A=\lbrace x \oplus x \ | \ x \in A \rbrace$ the associated diagonal.
\end{proposition}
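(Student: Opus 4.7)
The strategy is to apply Mayer--Vietoris on both the punctured disk and the tangle exterior, using the natural decomposition induced by the juxtaposition. In $D^2$, fix a vertical arc separating the $c_1$-punctures from the $c_2$-punctures, and let $U, V \subset D^2$ be thin open neighborhoods of the two closed halves. Then $U \cap V$ is contractible and disjoint from the branch set, so its preimage in $\widehat{D}_{c_1 \sqcup c_2}$ is a disjoint union of $|G|$ contractible components; meanwhile $p^{-1}(U) \simeq \widehat{D}_{c_1}$ and $p^{-1}(V) \simeq \widehat{D}_{c_2}$. By Lemma~\ref{lemma: boundary} both $\widehat{D}_{c_1}$ and $\widehat{D}_{c_2}$ are connected, so the Mayer--Vietoris sequence of $\C[G]$-modules reads
\[
0 \to H_1(\widehat{D}_{c_1}; \C) \oplus H_1(\widehat{D}_{c_2}; \C) \to H_1(\widehat{D}_{c_1 \sqcup c_2}; \C) \to I_G \to 0,
\]
where $I_G \subset \C[G]$ is the augmentation ideal. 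Since the functor $H \mapsto H_\omega$ is exact (Proposition~\ref{prop: nondeg}), setting $A := (I_G)_\omega$ yields an analogous short exact sequence on $\omega$-eigenspaces. The two summands $H_1(\widehat{D}_{c_i})_\omega$ are Hermitian non-degenerate (Lemma~\ref{lemma: boundary}) and mutually orthogonal, since their cycles can be represented in the disjoint interiors of $p^{-1}(U)$ and $p^{-1}(V)$. Taking $A$ to be the orthogonal complement of $H_1(\widehat{D}_{c_1})_\omega \oplus H_1(\widehat{D}_{c_2})_\omega$ in $H_1(\widehat{D}_{c_1 \sqcup c_2})_\omega$ gives a Hermitian direct sum decomposition, proving (1).

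For (2), apply the analogous Mayer--Vietoris to $X_{\tau_1 \sqcup \tau_2} = \widetilde U \cup \widetilde V$, with $\widetilde U = (U \times [0,1]) \setminus \mathcal N(\tau_1) \simeq X_{\tau_1}$, $\widetilde V = (V \times [0,1]) \setminus \mathcal N(\tau_2) \simeq X_{\tau_2}$, and $\widetilde U \cap \widetilde V = (U \cap V) \times [0,1]$ contractible. Using that $\widehat{X}_{\tau_i}$ is connected (as in the proof of Proposition~\ref{prop:FLagr}), this gives
\[
0 \to H_1(\widehat{X}_{\tau_1})_\omega \oplus H_1(\widehat{X}_{\tau_2})_\omega \to H_1(\widehat{X}_{\tau_1 \sqcup \tau_2})_\omega \to A \to 0
\]
with the \emph{same} extra piece $A$. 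By naturality of the Mayer--Vietoris connecting map for the inclusions $\widehat{D}_{c_1 \sqcup c_2}, \widehat{D}_{c_1' \sqcup c_2'} \hookrightarrow \widehat{X}_{\tau_1 \sqcup \tau_2}$, the maps $\hat{\imath}_{\tau_1 \sqcup \tau_2}$ and $\hat{\imath}'_{\tau_1 \sqcup \tau_2}$ both descend to the identity on $A$. Choosing compatible splittings of the three sequences, the map $\hat{\jmath}_{\tau_1 \sqcup \tau_2}$ decomposes as the block sum $\hat{\jmath}_{\tau_1} \oplus \hat{\jmath}_{\tau_2} \oplus \phi$, where $\phi\colon A \oplus A \to A$ is $(a, a') \mapsto a' - a$. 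Its kernel is $\mathit{Ker}(\hat{\jmath}_{\tau_1}) \oplus \mathit{Ker}(\hat{\jmath}_{\tau_2}) \oplus \Delta_A$, yielding (2).

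The main obstacle is ensuring the splittings are actually compatible. A priori, lifts of a given $a \in A$ chosen in $H_1(\widehat{D}_{c_1 \sqcup c_2})_\omega$ and in $H_1(\widehat{D}_{c_1' \sqcup c_2'})_\omega$ may have different images in $H_1(\widehat{X}_{\tau_1 \sqcup \tau_2})_\omega$, differing by an element of the kernel $H_1(\widehat{X}_{\tau_1})_\omega \oplus H_1(\widehat{X}_{\tau_2})_\omega$ of the $X$-level connecting map. However, by the proof of Proposition~\ref{prop:FLagr}, each $H_1(\widehat{X}_{\tau_i})_\omega$ is the direct sum of the images of $\hat{\imath}_{\tau_i}$ and $\hat{\imath}'_{\tau_i}$, so any such discrepancy can be absorbed by correcting the two disk-side splittings by suitable elements of $H_1(\widehat{D}_{c_1})_\omega \oplus H_1(\widehat{D}_{c_2})_\omega$ and $H_1(\widehat{D}_{c_1'})_\omega \oplus H_1(\widehat{D}_{c_2'})_\omega$; such corrections preserve the splitting property since they lie in the kernels of the disk-level connecting maps. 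This produces the desired block form of $\hat{\jmath}_{\tau_1 \sqcup \tau_2}$.
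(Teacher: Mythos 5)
Your proof is correct and follows essentially the same route as the paper's: a Mayer--Vietoris argument applied to the decompositions of $\widehat{D}_{c_1 \sqcup c_2}$ and $\widehat{X}_{\tau_1 \sqcup \tau_2}$ induced by the separating arc, exactness of the functor $H\mapsto H_\omega$, compatible splittings of the two resulting short exact sequences to get the block form of $\hat{\jmath}_{\tau_1\sqcup\tau_2}$, and the one-boundary-component property of Lemma~\ref{lemma: boundary} to arrange orthogonality. Your explicit identification of the cokernel with the augmentation ideal and your extra care in reconciling the three splittings are refinements of, not departures from, the paper's argument.
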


\begin{proof}
The space~$D_{c_1 \sqcup c_2}$ can be obtained by gluing~$D_{c_1}$ and~$D_{c_2}$ along an interval~$I$ in their boundary. As intervals are contractible, this decomposition lifts 
to~$\widehat{D}_{c_1 \sqcup c_2}=\widehat{D}_{c_1} \cup_{I \times G} \widehat{D}_{c_2}$. 
Applying the same line of reasoning to the tangle exteriors~$X_{\tau_1}$ and~$X_{\tau_2}$ and using the corresponding Mayer-Vietoris exact sequence leads to the commutative diagram
\[
\begin{xy}
(0,15)*+{0}="a";
(30,15)*+{H_1(\widehat{D}_{c_1})_{\omega}\oplus H_1(\widehat{D}_{c_2})_{\omega}}="b";
(70,15)*+{H_1(\widehat{D}_{c_1 \sqcup c_2})_{\omega}}="c";
(100,15)*+{\mathit{Image}(\partial)}="d";
(120,15)*+{0}="e";
{\ar@{->}^-{} "a";"b"};
{\ar@{->}^-{} "b";"c"};
{\ar@{->}^-{\partial} "c";"d"};
{\ar@{->}^-{} "d";"e"};
(0,0)*+{0 }="f";
(30,0)*+{H_1(\widehat{X}_{\tau_1})_{\omega} \oplus H_1(\widehat{X}_{\tau_2})_{\omega}}="g";
(70,0)*+{H_1(\widehat{X}_{\tau_1 \sqcup \tau_2})_{\omega}}="h";
(100,0)*+{\mathit{Image}(\partial)}="i";
(120,0)*+{0.}="j";
{\ar@{->}^-{} "f";"g"};
{\ar@{->}^-{} "g";"h"};
{\ar@{->}^-{\partial} "h";"i"};
{\ar@{->}^-{} "i";"j"};
{\ar@{->}^{\hat{\imath}_{\tau_1} \oplus \hat{\imath}_{\tau_2}} "b";"g"};
{\ar@{->}^{\hat{\imath}_{\tau_1 \sqcup \tau_2}} "c";"h"};
{\ar@{=} "d";"i"};
\end{xy} 
\]
Splitting these short exact sequences of vector spaces and writing~$A=\mathit{Image}(\partial)$, one gets the decompositions
\[
H_1(\widehat{D}_{c_1 \sqcup c_2})_{\omega} \cong H_1(\widehat{D}_{c_1})_{\omega} \oplus H_1(\widehat{D}_{c_2})_{\omega} \oplus A
\]
and
\[
H_1(\widehat{X}_{\tau_1 \sqcup \tau_2})_{\omega} \cong H_1(\widehat{X}_{\tau_1})_{\omega} \oplus H_1(\widehat{X}_{\tau_2})_{\omega} \oplus A\,.
\]
At the level of maps, one obtains~$\hat{\imath}_{\tau_1 \sqcup \tau_2}=\hat{\imath}_{\tau_1} \oplus \hat{\imath}_{\tau_2} \oplus \mathit{id}_A$, so the vector
space~$\mathcal{F}_\omega(\tau_1 \sqcup \tau_2)$ is isomorphic to~$\mathit{Ker}(\hat{\jmath}_{\tau_1}) \oplus \mathit{Ker}(\hat{\jmath}_{\tau_2}) \oplus \Delta_A$ as claimed.
To conclude the proof, we still need to check that the first decomposition displayed above is orthogonal with respect to the intersection forms.
As~$\omega$ belongs to~$\mathbb{T}^\mu_P$ and the tangles are morphisms of~$\mathbf{Tangles}_\mu^\omega$, Lemma~\ref{lemma: boundary} implies
that~$\widehat{D}_{c_1}$ and~$\widehat{D}_{c_2}$ are compact surfaces with one boundary component. It follows that the section of the exact sequence above can be chosen so that
the corresponding decomposition is orthogonal.
\end{proof}


\section{Proof of Theorem~\ref{thm:main}}
\label{sec:proof}

This section is devoted to the proof of our main result, a proof which extends (and hopefully, at times, also clarifies) the one of~\cite{GG}.
Let us very briefly outline the underlying strategy. We will build~$4$-manifolds whose~$\omega$-signatures are equal to the terms appearing in the theorem.
Gluing these manifolds together yields a manifold whose~$\omega$-signature is equal to
\[
\sign_\omega(\widehat{\tau_1 \tau_2})-\sign_\omega(\widehat{\tau_1})-\sign_\omega(\widehat{\tau_2}) -\mathit{Maslov}(\mathcal{F}_\omega(\overline{\tau}_1),\Delta,\mathcal{F}_\omega(\tau_2))\,.
\]
It will then only remain to show that this signature vanishes.

Actually, it is sufficient to show all these equalities up to a uniformly bounded constant, thanks to a reduction of our main theorem to a looser statement.
This is the subject of the first subsection.

\subsection{A reduction}
\label{sub:red}

If~$n$ and~$m$ are two integers depending on some tangles, we shall write~$ n \simeq m $ if~$|n-m|$ is bounded by a constant that is independent of the tangles.
The aim of this paragraph is to prove the following proposition which will spare us the trouble of keeping track of (most of) the Novikov-Wall defects.

\begin{proposition}
\label{prop: reduction}
To prove Theorem~\ref{thm:main}, it is enough to show that for any~$c$ such that~$\ell(c)$ is nowhere zero and for any~$(c,c)$-tangles~$\tau_1$ and~$\tau_2$,
we have
\[
\mathit{sign}_\omega(\widehat{\tau_1\tau_2})-\mathit{sign}_\omega(\widehat{\tau_1})-\sign_\omega(\widehat{\tau_2}) \simeq \mathit{Maslov}(\mathcal{F}_\omega( \overline{\tau}_1), \Delta ,\mathcal{F}_\omega(\tau_2))
\]
for all~$\omega$ in~$\mathbb{T}^\mu_c\cap\mathbb{T}^\mu_P$.
\end{proposition}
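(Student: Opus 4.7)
The plan is to exploit a standard stabilization trick: both the defect
\[
\delta_\omega(\tau_1,\tau_2):=\sign_\omega(\widehat{\tau_1\tau_2})-\sign_\omega(\widehat{\tau_1})-\sign_\omega(\widehat{\tau_2})
\]
and the Maslov term $M_\omega(\tau_1,\tau_2):=\mathit{Maslov}(\mathcal{F}_\omega(\overline{\tau}_1),\Delta,\mathcal{F}_\omega(\tau_2))$ will be shown to scale \emph{exactly linearly} under juxtaposition powers. Combined with a uniform bound on $\delta_\omega-M_\omega$, this forces the difference to vanish identically.

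Fix $c$ with $\ell(c)$ nowhere zero, $\omega\in\mathbb{T}^\mu_c\cap\mathbb{T}^\mu_P$, and $(c,c)$-tangles $\tau_1,\tau_2$. For each positive integer $k$ coprime to $k_1k_2\cdots k_\mu$ (where $k_i$ is the order of $\omega_i$), I would consider the juxtapositions $\tau_i^{\sqcup k}\in T_\mu(c^{\sqcup k},c^{\sqcup k})$. Since $\ell(c^{\sqcup k})=k\,\ell(c)$ has nonzero entries all coprime to the $k_i$'s by the choice of $k$, the coloring $c^{\sqcup k}$ is admissible in the sense of the hypothesis, so the hypothesis produces a constant $C$ independent of $k$ such that
\[
|\delta_\omega(\tau_1^{\sqcup k},\tau_2^{\sqcup k})-M_\omega(\tau_1^{\sqcup k},\tau_2^{\sqcup k})|\le C.
\]

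Next I would verify the linearity of both sides in $k$. For $\delta_\omega$, the monoidal interchange law gives $(\tau_1^{\sqcup k})(\tau_2^{\sqcup k})=(\tau_1\tau_2)^{\sqcup k}$, and since the closure of a juxtaposition is the disjoint union of closures, Lemma~\ref{lemma:sign} immediately yields $\delta_\omega(\tau_1^{\sqcup k},\tau_2^{\sqcup k})=k\,\delta_\omega(\tau_1,\tau_2)$. For $M_\omega$, I would iterate Proposition~\ref{prop: Fdisjoint} to obtain a Hermitian space $A_k$ and an orthogonal decomposition
\[
\mathcal{F}_\omega(c^{\sqcup k})\cong\mathcal{F}_\omega(c)^{\oplus k}\oplus A_k,
\]
under which $\mathcal{F}_\omega(\overline{\tau}_1^{\sqcup k})$, $\Delta_{\mathcal{F}_\omega(c^{\sqcup k})}$ and $\mathcal{F}_\omega(\tau_2^{\sqcup k})$ become $\mathcal{F}_\omega(\overline{\tau}_1)^{\oplus k}\oplus\Delta_{A_k}$, $\Delta_{\mathcal{F}_\omega(c)}^{\oplus k}\oplus\Delta_{A_k}$ and $\mathcal{F}_\omega(\tau_2)^{\oplus k}\oplus\Delta_{A_k}$ respectively. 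Applying the direct-sum additivity of the Maslov index (Lemma~\ref{lemma:Maslov}(i)) together with the vanishing $\mathit{Maslov}(\Delta_{A_k},\Delta_{A_k},\Delta_{A_k})=0$ (Lemma~\ref{lemma:Maslov}(ii)) would then give $M_\omega(\tau_1^{\sqcup k},\tau_2^{\sqcup k})=k\,M_\omega(\tau_1,\tau_2)$.

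Combining these linearities with the uniform bound produces $k\,|\delta_\omega(\tau_1,\tau_2)-M_\omega(\tau_1,\tau_2)|\le C$ for arbitrarily large admissible $k$, hence $\delta_\omega(\tau_1,\tau_2)=M_\omega(\tau_1,\tau_2)$. The only genuinely delicate step in this plan will be the exact linearity of $M_\omega$ under juxtaposition powers: it hinges on the \emph{orthogonality} of the decomposition in Proposition~\ref{prop: Fdisjoint} (required for Maslov additivity) and on the triviality of the extra diagonal contribution provided by Lemma~\ref{lemma:Maslov}(ii). Everything else reduces to Lemma~\ref{lemma:sign} and the standard behavior of the closure operation under juxtaposition.
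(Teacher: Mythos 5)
Your overall strategy is the same as the paper's: juxtapose copies of the tangles, use additivity of both sides to scale the discrepancy without bound, and conclude from the uniform bound that it must vanish. The signature side is fine (Lemma~\ref{lemma:sign} gives exact $k$-linearity of $\delta_\omega$ under $k$-fold juxtaposition). The gap is in the Maslov side. You cannot ``iterate Proposition~\ref{prop: Fdisjoint}'' on $\tau^{\sqcup k}$: that proposition requires \emph{both} juxtaposed factors to be morphisms of $\mathbf{Tangles}_\mu^\omega$, i.e.\ both colorings must have $\ell$-values coprime to the $k_i$'s (this is exactly what guarantees, via Lemma~\ref{lemma: boundary}, that the decomposition is orthogonal and that the $\Delta_A$ summand behaves as a diagonal). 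Choosing $k$ coprime to $k_1\cdots k_\mu$ makes the \emph{total} coloring $c^{\sqcup k}$ admissible, but the intermediate colorings $c^{\sqcup j}$, $1<j<k$, need not be. Worse, no reordering of the splitting can save this: already for $\mu=1$, $\omega=-1$ (so $k_1=2$) and $\ell(c)$ odd, a coloring $c^{\sqcup j}$ is admissible iff $j$ is odd, and a juxtaposition of two admissible pieces has even $j$, hence is never admissible — so Proposition~\ref{prop: Fdisjoint} cannot be applied even once to split $\tau^{\sqcup k}$ for $k>1$. Thus the key identity $M_\omega(\tau_1^{\sqcup k},\tau_2^{\sqcup k})=k\,M_\omega(\tau_1,\tau_2)$ is unproved, and your argument breaks down precisely in the most classical case.

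This is exactly the subtlety the paper's Lemmas~\ref{lem:gcd} and~\ref{lem:trick} are designed to handle (see also Remark~\ref{rem:GG}, where the authors note that the naive version of this trick used by Gambaudo and Ghys is incorrect). Instead of raw juxtaposition powers, they interleave trivial tangles with strand counts $m_i,n_i$ chosen so that \emph{every} intermediate coloring appearing in the splitting is admissible; this only yields a doubling $M_\omega(\tau_1',\tau_2')=2M_\omega(\tau_1,\tau_2)$ rather than $k$-linearity, but iterating the doubling gives $2^m N_\omega(\tau_1,\tau_2)$, which is still unbounded and suffices for the contradiction. To repair your proof you would either have to adopt this construction, or prove from scratch a $k$-fold version of Proposition~\ref{prop: Fdisjoint} (redoing the Mayer--Vietoris and orthogonality arguments for a one-step $k$-fold gluing), which is not what the proposition as stated provides.
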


We know from Lemma~\ref{lemma:sign} that the signature defect (i.e. the left-hand side of the equation displayed above) is additive with respect to the disjoint union of tangles.
Furthermore, Lemma~\ref{lemma:Maslov} and Proposition~\ref{prop: Fdisjoint} immediately imply that the right-hand side of this equation, that we shall denote by~$M_\omega(\tau_1,\tau_2)$,
satisfies
\[
M_\omega(\tau_1\sqcup\tau_1',\tau_2\sqcup\tau_2')=M_\omega(\tau_1,\tau_2)+M_\omega(\tau'_1,\tau'_2)
\]
if~$\omega$ belongs to~$\mathbb{T}^\mu_P$ and the tangles~$\tau_1,\tau_2,\tau_1',\tau_2'$ are morphisms of the category~$\mathbf{Tangles}^\omega_\mu$.
This will be the key ingredient in the proof of the reduction. We will also need the following easy lemma.

\begin{lemma}
\label{lem:gcd}
Given two coprime integers~$\ell\neq 0$ and~$k>0$, there exist two positive integers~$m$ and~$n$ such that~$m+n+\ell$, $2\ell+m$ and~$n$ are positive and coprime to~$k$.
\end{lemma}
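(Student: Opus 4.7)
The strategy is to first determine suitable residue classes for $m$ and $n$ modulo $k$, then lift these classes to positive integer representatives that are large enough to handle the positivity constraints. Specifically, any lifts $m, n > 2|\ell|$ automatically give $n > 0$, $2\ell + m > 0$, and $m + n + \ell > 0$, so it is enough to exhibit residues $\bar m, \bar n \in \Z/k\Z$ such that $\bar n$, $2\ell + \bar m$, and $\bar m + \bar n + \ell$ are all units modulo $k$.

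By the Chinese Remainder Theorem, this existence problem decouples across the prime powers $p^e$ exactly dividing $k$. Moreover, being a unit modulo $p^e$ depends only on the residue modulo $p$, so the task reduces, for each prime $p \mid k$, to producing a pair $(\bar m, \bar n) \in (\Z/p\Z)^2$ satisfying
\[
\bar n \not\equiv 0, \qquad \bar m \not\equiv -2\ell, \qquad \bar m + \bar n \not\equiv -\ell \pmod{p}.
\]
The hypothesis $\gcd(\ell, k) = 1$ ensures that $\ell$ itself is a unit modulo every such $p$, a fact needed below.

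For any odd prime $p \geq 3$, choose $\bar n$ to be any of the $p-1 \geq 2$ nonzero residues modulo $p$; then the two excluded values for $\bar m$ leave at least $p - 2 \geq 1$ admissible residues, so a valid pair exists trivially. The only (minor) obstacle, which I expect to be the main point requiring care, is the case $p = 2$: there $\bar n$ is forced to be $1$, and one must check the constraints on $\bar m$ are compatible. Since $\ell$ is odd, the two forbidden residues $-2\ell \equiv 0$ and $-\ell - \bar n \equiv -\ell - 1 \equiv 0 \pmod{2}$ coincide, leaving $\bar m \equiv 1 \pmod 2$ as a valid choice. Assembling these local solutions through the Chinese Remainder Theorem produces $\bar m, \bar n \in \Z/k\Z$ with the required unit properties, and any positive lifts exceeding $2|\ell|$ yield integers $m, n$ satisfying all the conclusions of the lemma.
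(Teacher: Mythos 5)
Your proof is correct, but it takes a genuinely different and considerably longer route than the paper. The paper's argument is a one-line explicit construction: set $m=\lambda k-\ell$ and $n=\lambda k+\ell$ for $\lambda>0$ large enough that $m$ and $n$ are positive. Then all three quantities of interest are congruent to $\ell$ modulo $k$ (indeed $n=\lambda k+\ell$, $2\ell+m=\lambda k+\ell$, and $m+n+\ell=2\lambda k+\ell$), so coprimality to $k$ follows at once from $\gcd(\ell,k)=1$, and positivity is immediate. Your local-global argument via the Chinese Remainder Theorem is sound at every step --- the reduction of coprimality to being nonzero modulo each prime divisor of $k$, the counting argument for odd primes, the careful treatment of $p=2$ (where the two forbidden residues for $\bar m$ coincide precisely because $\ell$ is odd), and the choice of lifts exceeding $2|\ell|$ to secure positivity all check out. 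What your approach buys is generality: it would handle essentially any finite system of affine-linear unit conditions in $m$ and $n$, whereas the paper's trick exploits the specific shape of these three expressions to collapse them all onto the single residue class of $\ell$. For this particular lemma the explicit formula is the more economical choice, but nothing in your argument is wrong or incomplete.
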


\begin{proof}
Set~$m=\lambda k-\ell$ and~$n=\lambda k+\ell$ for any integer~$\lambda>0$ such that~$m$ and~$n$ are positive.
\end{proof}

Proposition~\ref{prop: reduction} will be an easy consequence of the following statement.

\begin{lemma}
\label{lem:trick}
Let~$\omega$ be an element of~$\mathbb{T}_P^\mu$. For any~$\tau_1, \tau_2 \in T_\mu^\omega(c,c)$, there exists a 
coloring~$c'$ that is an object of the category~$\mathbf{Tangles}_\mu^\omega$ and~$(c',c')$-colored tangles~$\tau_1',\tau_2'$ such
that~$M_\omega(\tau_1',\tau_2')=2M_\omega(\tau_1,\tau_2)$.
\end{lemma}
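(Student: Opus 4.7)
The plan is to exploit two ingredients that the preceding material immediately provides. The first is the additivity of $M_\omega$ under disjoint union: for morphisms $\sigma_1,\sigma_2,\sigma_1',\sigma_2'$ of $\mathbf{Tangles}_\mu^\omega$ one has
\[
M_\omega(\sigma_1\sqcup\sigma_1',\sigma_2\sqcup\sigma_2')=M_\omega(\sigma_1,\sigma_2)+M_\omega(\sigma_1',\sigma_2'),
\]
as observed just before the lemma. The second is the vanishing identity $M_\omega(\mathit{id}_{c''},\mathit{id}_{c''})=0$, which follows from the functoriality $\mathcal{F}_\omega(\mathit{id}_{c''})=\mathcal{F}_\omega(\overline{\mathit{id}}_{c''})=\Delta$ combined with Lemma~\ref{lemma:Maslov}(ii), which gives $\mathit{Maslov}(\Delta,\Delta,\Delta)=0$.

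The construction will then be $\tau_i'=\tau_i\sqcup\tau_i\sqcup\mathit{id}_{c''}$ on the coloring $c'=c\sqcup c\sqcup c''$, for a suitably chosen coloring $c''$ built color by color from sequences of $\pm i$-strands. If $c'$, together with every coloring that occurs as the source of a tangle to which we apply the additivity formula, belongs to $\mathbf{Tangles}_\mu^\omega$, then splitting $\tau_i'$ into its three disjoint pieces and applying additivity twice gives
\[
M_\omega(\tau_1',\tau_2')=2\,M_\omega(\tau_1,\tau_2)+M_\omega(\mathit{id}_{c''},\mathit{id}_{c''})=2\,M_\omega(\tau_1,\tau_2),
\]
which is the desired equality.

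The nontrivial content is therefore the combinatorial choice of $c''$. Writing $\ell_i=\ell(c)_i$ (coprime to $k_i$ by hypothesis), we need the $\ell$-value of each intermediate coloring appearing in the splitting to be coprime to every $k_i$ and nonzero. For each $i$ separately this is exactly a number-theoretic problem about which residues mod $k_i$ can be simultaneously avoided, and it is solved by Lemma~\ref{lem:gcd}: applying it with $\ell=\ell_i$ and $k=k_i$ gives positive integers $m_i,n_i$ such that $n_i$, $2\ell_i+m_i$ and $m_i+n_i+\ell_i$ are all positive and coprime to $k_i$. These three controlled linear combinations are exactly the data needed to assemble $c''$ (and to organize the decomposition of $\tau_i'$ into pieces) so that every intermediate source coloring lands in $\mathbf{Tangles}_\mu^\omega$ — including the delicate case where some $k_i=2$, where naive doublings such as $c\sqcup c$ would fail to satisfy the coprimality condition.

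The main obstacle is precisely this bookkeeping: the additivity formula can only be applied along a decomposition in which every piece individually is a morphism of $\mathbf{Tangles}_\mu^\omega$, so the order in which the three blocks of $\tau_i'$ are split matters, and one must verify that the linear combinations dictated by Lemma~\ref{lem:gcd} cover all the required conditions. Once this is set up carefully, the argument is a direct chain of applications of additivity together with the identity computation $M_\omega(\mathit{id}_{c''},\mathit{id}_{c''})=0$.
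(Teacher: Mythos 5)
Your high-level strategy (double the tangle, pad with a trivial tangle, use additivity of $M_\omega$ and the vanishing of $M_\omega$ on identities, with Lemma~\ref{lem:gcd} supplying the arithmetic) is the right one, and it is indeed the paper's. But the proof has a genuine gap at exactly the point you flag as ``bookkeeping'': you never produce the coloring $c''$ or the order of the splittings, and the decomposition you actually describe --- splitting $\tau_i'=\tau_i\sqcup\tau_i\sqcup\mathit{id}_{c''}$ into its three constituent blocks by two applications of additivity --- is \emph{impossible} whenever some $k_i$ is even, for every choice of $c''$ and every order. Indeed, write $\ell=\ell(c)_i$ and suppose $k_i$ is even, so $\ell$ is odd. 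Each application of additivity requires both pieces to be morphisms of $\mathbf{Tangles}_\mu^\omega$, hence to have odd $\ell$-value in color $i$. In any iterated binary splitting of $\tau_1\sqcup\tau_1\sqcup(\text{trivial blocks})$ into its blocks, every internal node of the splitting tree has $\ell$-value equal to the sum of its two children's, hence even; but every internal node other than the root is itself a child at some stage and must be odd. So only a single split is possible, which cannot separate three or more blocks. In particular your scheme already fails in the classical case $\mu=1$, $\omega=-1$ --- precisely the case the whole reduction is designed to rescue.

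The missing idea is that one must \emph{enlarge} the tangles at intermediate stages of the computation, not merely reorder the splits of $\tau_i'$. This is why Lemma~\ref{lem:gcd} produces \emph{two} integers $m,n$ and controls the three combinations $n$, $2\ell+m$, $m+n+\ell$ (rather than $m$, $m+\ell$, $m+2\ell$, which cannot all be made coprime to $k$ when $2\mid k$ or $3\mid k$). Concretely, the paper sets $\tau_i'=\tau_i\sqcup\tau_i\sqcup\bigsqcup_i m_i$, so that $\ell(c')_i=2\ell_i+m_i$ is coprime to $k_i$; then it first adjoins $\bigsqcup_i n_i$ (harmless, since $M_\omega(\bigsqcup n_i,\bigsqcup n_i)=0$ and $n_i$ is coprime to $k_i$), splits off one copy of $\tau_i$ (the complementary piece now has $\ell$-value $\ell_i+m_i+n_i$, coprime to $k_i$ by the first point of Lemma~\ref{lem:gcd}, whereas $\ell_i+m_i$ alone would be $\equiv 0$), then adjoins $\mathit{id}_c$ and splits off the second copy, leaving a trivial tangle of $\ell$-value $m_i+n_i+\ell_i$ whose $M_\omega$ vanishes. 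Without this device of inserting auxiliary trivial tangles mid-argument, the equality $M_\omega(\tau_1',\tau_2')=2M_\omega(\tau_1,\tau_2)+M_\omega(\mathit{id}_{c''},\mathit{id}_{c''})$ you assert cannot be reached.
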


\begin{proof}
Since~$c$ is an object of~$\mathbf{Tangles}_\mu^\omega$, we can apply Lemma~\ref{lem:gcd} to~$\ell_i=\ell(c)_i\neq 0$ and~$k_i>0$, thus
producing positive integers~$m_i$ and~$n_i$ for~$i=1,\dots,\mu$. Set
\[
\textstyle \tau_1':=\tau_1 \sqcup \tau_1 \sqcup \bigsqcup_{i=1}^\mu m_i\quad\text{and}\quad  \tau_2':=\tau_2 \sqcup \tau_2\sqcup \bigsqcup_{i=1}^\mu m_i\,,
\]
where~$m_i$ denotes the trivial tangle with~$m_i$ (upward oriented) strands of color~$i$, and let~$c'$ be the corresponding coloring.
The fact that~$c'$ is an object of~$\mathbf{Tangles}_\mu^\omega$ follows from the second point of~Lemma~\ref{lem:gcd}.
By the second and third points of this same lemma, we have
\[
\textstyle M_\omega(\tau_1',\tau_2')
= M_\omega \left( \tau_1 \sqcup \tau_1 \sqcup \bigsqcup_{i=1}^\mu (m_i \sqcup n_i),\tau_2\sqcup \tau_2 \sqcup \bigsqcup_{i=1}^\mu (m_i \sqcup n_i) \right)\,,
\]
which splits as
\[
\textstyle M_\omega(\tau_1,\tau_2)+ M_\omega \left( \tau_1 \sqcup \bigsqcup_{i=1}^\mu (m_i \sqcup n_i ) ,\tau_2 \sqcup \bigsqcup_{i=1}^\mu (m_i \sqcup n_i  ) \right)
\]
by the first point of Lemma~\ref{lem:gcd} and the fact that~$\tau$ is a morphism of~$\mathbf{Tangles}_\mu^\omega$.
Finally, adding the trivial tangle~$\mathit{id}_c$ and using twice more a combination of the first part of Lemma~\ref{lem:gcd} and the fact that~$\tau$ is a
morphism~$\mathbf{Tangles}_\mu^\omega$, we obtain
\[
\textstyle M_\omega(\tau_1',\tau_2')=2M_\omega(\tau_1,\tau_2)+M_\omega \left( \bigsqcup_{i=1}^\mu (m_i \sqcup n_i \sqcup \ell_i) , \bigsqcup_{i=1}^\mu (m_i \sqcup n_i \sqcup \ell_i) \right)= 2M_\omega(\tau_1,\tau_2)\,,
\]
which concludes the proof.
\end{proof}

\begin{proof}[Proof of Proposition~\ref{prop: reduction}]
Assume by contradiction that for a fixed map~$c$, there are~$(c,c)$-tangles~$\tau_1,\tau_2$ and an element~$\omega$ of~$\mathbb{T}^\mu_c \cap \mathbb{T}^\mu_P$ such that
\[
N_\omega(\tau_1,\tau_2):=\sign_\omega(\widehat{\tau_1 \tau_2})-\sign_\omega(\widehat{\tau_1})-\sign_\omega(\widehat{\tau_2}) -\mathit{Maslov}(\mathcal{F}_\omega(\overline{\tau}_1),\Delta, \mathcal{F}_{\omega}(\tau_2))
\]
does not vanish. For any positive integer~$m$, using inductively Lemma~\ref{lem:trick}, one obtains a coloring~$c(m)$ and tangles~$\tau_1(m),\tau_2(m)$ such that~$\omega$
belongs to~$T_{c(m)}\cap\mathbb{T}^\mu_P$ and 
\[
N_\omega(\tau_1(m),\tau_2(m))=2^m\,N_\omega(\tau_1,\tau_2)\,.
\]
Since this quantity goes to infinity as~$m$ grows, this concludes the proof.
\end{proof}

\begin{remark}
\label{rem:GG}
The idea of this reduction comes from the paper~\cite{GG} of Gambaudo and Ghys. Let us mention however that these authors use a simpler
version of this trick, which turns out to be slightly incorrect. (See the last line of~\cite[p.559]{GG}, where it is claimed that the reduced Burau
representation evaluated at~$t=-1$ is always additive
under disjoint union.) To the best of our knowledge, the more involved trick given above seems to be needed even in the case~$\mu=1$.
\end{remark}

\subsection{The manifold~$P_G(\tau_1,\tau_2)$.}
\label{sub:P}

Fix a map~$c\colon\lbrace  1,\dots, n \rbrace \rightarrow \lbrace \pm 1,\dots, \pm\mu \rbrace $ with~$\ell(c)$ nowhere zero, an element~$\omega$
in~$\mathbb{T}_c^\mu\cap\mathbb{T}_P^\mu$ and two~$(c,c)$-tangles~$\tau_1$,~$\tau_2$. Denote by~$G$ the finite abelian group~$C_{k_1} \times \dots\times C_{k_\mu}$,
where~$k_i$ is the order of~$\omega_i$.

Let~$P$ be the sphere with three holes more commonly known as a ``pair of pants'', with a fixed orientation that will be pictured as counterclockwise.
Let~$I_1$ and~$I_2$ be closed intervals joining the inner boundary components of the pair of pants to the outer boundary component. Thicken these intervals
to get rectangles~$J_1=I_1 \times [0,1]$ and~$J_2=I_2 \times [0,1]$, as illustrated in Figure~\ref{fig:pants}.

\begin{figure}[tb]
\labellist\small\hair 2.5pt
\pinlabel {$J_1$} at 113 512
\pinlabel {$J_2$} at 460 512
\pinlabel {$\gamma$} at 265 657
\pinlabel {$P$} at 90 690
\pinlabel {$I_1\times\{0\}$} at 118 545
\pinlabel {$I_1\times\{1\}$} at 118 477
\pinlabel {$I_2\times\{0\}$} at 458 477
\pinlabel {$I_2\times\{1\}$} at 458 545
\endlabellist
\centering
\includegraphics[width=0.4\textwidth]{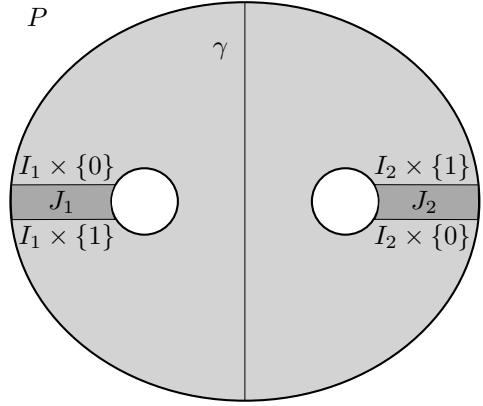}
\caption{The various decompositions of the pair of pants~$P$.}
\label{fig:pants}
\end{figure}

Define~$R(\tau_1,\tau_2)$ as the surface in~$P\times D^2$ which coincides with
\begin{enumerate}
\itemsep0em 
\item the surface~$(P\setminus(J_1 \cup J_2)) \times \lbrace x_1,\dots,x_n \rbrace$ on~$(P\setminus(J_1 \cup J_2))\times D^2$,
\item the surface~$I_1 \times \tau_1,$ on~$ J_1 \times D^2=I_1\times ([0,1] \times D^2)$, and
\item the surface~$I_2 \times \tau_2,$ on~$ J_2 \times D^2=I_2\times ([0,1] \times D^2)$.
\end{enumerate}
Observe that for each point~$x\in I_i$ ($i=1,2$), the surface~$R(\tau_1,\tau_2)$ contains a copy of~$\tau_i$;
therefore, its complement~$P \times D^2\setminus R(\tau_1,\tau_2))$ contains one copy of the tangle exterior~$X_{\tau_i}$ for each point in~$I_i$.
Recall from subsection~\ref{sub:isotr} that for any~$(c,c)$-tangle~$\tau$, there is a natural map~$H_1(X_{\tau})\rightarrow G$ obtained by composing the colored-induced
map with the canonical projection.

\begin{lemma}
\label{lem:existence}
There exists a homomorphism~$H_1(P \times D^2\setminus R(\tau_1,\tau_2))\rightarrow G$ which is trivial when restricted to loops in~$ P\times\lbrace x \rbrace$
(with~$x\in\partial D^2$), and whose composition with the homomorphism induced by the inclusion of any copy of~$X_{\tau_i}$ into~$P \times D^2\setminus R(\tau_1,\tau_2)$
coincides with the natural map~$H_1(X_{\tau_i})\to G$ (for~$i=1,2$).
\end{lemma}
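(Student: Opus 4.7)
The plan is to construct the homomorphism $\phi \colon H_1(Y) \to G$, where $Y := P \times D^2 \setminus R(\tau_1, \tau_2)$, by a Mayer--Vietoris gluing adapted to the tripartite decomposition of the pair of pants. Cover $Y$ by three open sets $Y_0, Y_1, Y_2$ deformation retracting respectively onto $(P \setminus \mathrm{int}(J_1 \cup J_2)) \times D_c$, $I_1 \times X_{\tau_1}$, and $I_2 \times X_{\tau_2}$. Each pairwise intersection $Y_0 \cap Y_i$ deformation retracts onto two disjoint copies of $D_c$ (one at each end of the strip $I_i$), while the triple intersection is empty.

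On each piece I would define a homomorphism to $G$ using the natural coloring maps already in hand. On $Y_0$, project onto $D_c$ and post-compose with the map $H_1(D_c) \to G$ sending $[e_j]$ to the image of $t_{|c_j|}$. On $Y_i$ for $i = 1, 2$, project onto $X_{\tau_i}$ and post-compose with the natural map $H_1(X_{\tau_i}) \to G$. The critical compatibility check is that on each component of $Y_0 \cap Y_i$ these two candidates coincide, which reduces to the observation that the inclusion $D_c \hookrightarrow X_{\tau_i}$, at either end of the cylinder housing $\tau_i$, sends each $e_j$ to a meridian of a tangle component whose color is $c_j$ (because $\tau_i$ has boundary coloring $c$ at both its top and its bottom). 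The natural map on $X_{\tau_i}$ then sends this meridian to $t_{|c_j|}$, matching the map on $D_c$.

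Once the local homomorphisms are compatible on overlaps, a Mayer--Vietoris argument (equivalently, van Kampen applied as an iterated pushout and then abelianised) delivers the sought homomorphism $\phi$ on $H_1(Y)$. The two required properties are then local checks. For a loop in $P \times \{x\}$ with $x \in \partial D^2$: the loop lies in $P \times \partial D^2 \subset Y$, and when subdivided according to the open cover, each sub-arc projects under $Y_0 \to D_c$ or $Y_i \to X_{\tau_i}$ to the constant path at $x$, so its image in $G$ is zero. For the compatibility with the natural map on an embedded $X_{\tau_i}$: by construction, the composition $\{t\} \times X_{\tau_i} \hookrightarrow Y_i \to X_{\tau_i}$ is the identity, so $\phi$ restricts to the natural map $H_1(X_{\tau_i}) \to G$.

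The main obstacle is the overlap compatibility in the second paragraph. Although it ultimately reduces to a matching statement at the level of meridians, it is genuinely a feature of the colored category: both ends of $\tau_i$ must carry the same boundary coloring $c$ for the two copies of $D_c$ inside $\partial X_{\tau_i}$ to induce the same map to $G$, so that the local homomorphism defined on $Y_0$ (via projection onto $D_c$) and the one defined on $Y_i$ (via projection onto $X_{\tau_i}$) agree on $Y_0 \cap Y_i$. Once this is in place, the remainder is a routine diagram chase.
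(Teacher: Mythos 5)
Your proof is correct and follows essentially the same route as the paper: the authors also decompose $P\times D^2\setminus R(\tau_1,\tau_2)$ into $(P\setminus(J_1\cup J_2))\times D_c$ and the two pieces $I_i\times X_{\tau_i}$ (grouped as one disconnected set, so their Mayer--Vietoris intersection is four copies of $D_c$), and extend the natural maps across the resulting exact sequence. Your write-up is in fact slightly more explicit than theirs about the overlap compatibility on the copies of $D_c$ and about why the extra generators coming from loops in $P\times\{x\}$ can be (and are) sent to zero.
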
 

\begin{proof}
Decompose the space~$X=(P \times D^2)\setminus R(\tau_1,\tau_2)$ as the union of~$A=(P\setminus(J_1 \cup J_2))\times D_c$
and~$B=((J_1 \times D^2)\setminus(I_1 \times \tau_1)) \sqcup ((J_2 \times D^2)\setminus(I_2 \times \tau_2))$. As~$P\setminus(J_1 \cup J_2)$ is contractible,~$A$ retracts
onto~$D_c$. As~$B$ is equal to~$(I_1 \times X_{\tau_1}) \sqcup (I_2 \times X_{\tau_2}) $, it has the homotopy type of~$X_{\tau_1} \sqcup X_{\tau_2}$. Finally,~$A \cap B$ has the homotopy type of four disjoint copies of the punctured disc~$D_c$. Therefore the associated Mayer-Vietoris exact sequence has the form
\[
\textstyle H_1(\bigsqcup_1^4 D_c)\to H_1(D_c)\oplus H_1(X_{\tau_1})\oplus H_1(X_{\tau_2})\to H_1(X)\to H_0(\bigsqcup_1^4 D_c)\,,
\]
which allows us to extend~$H_1(X_{\tau_1})\oplus H_1(X_{\tau_2})\rightarrow G$ to the desired map~$H_1(X) \rightarrow G$.
\end{proof}

Using the homomorphism of Lemma~\ref{lem:existence}, one obtains a~$G$-covering~$P_G(\tau_1,\tau_2)\rightarrow P \times D^2$
branched along~$R(\tau_1,\tau_2)$. Let us start by studying its boundary, which is nothing but the lift
of~$\partial(P\times D^2)=\partial P\times D^2\cup_{\partial P\times\partial D^2} P\times\partial D^2$.
By definition, the surface~$R(\tau_1,\tau_2)$ intersects the three components of~$\partial P\times D^2$ in the closure of the three tangles~$\tau_1$,~$\tau_2$ and~$\tau_1\tau_2$ in
solid tori~$S^1\times D^2$. Therefore, Lemma~\ref{lem:existence} implies that~$\partial P\times D^2$ lifts to
\[
\widehat{X}_{\widehat{\tau_1}} \sqcup \widehat{X}_{\widehat{\tau_2}} \sqcup \widehat{X}_{\widehat{\tau_1 \tau_2}}\subset \partial P_G(\tau_1,\tau_2)\,,
\]
where~$X_{\widehat{\tau}}$ denotes the exterior of~$\widehat{\tau}$ in~$S^1\times D^2$ and~$\widehat{X}_{\widehat{\tau}}$ the corresponding cover.
Since~$\omega$ belongs to~$\mathbb{T}_c^\mu\cap\mathbb{T}_P^\mu$, Lemma~\ref{lemma: boundary} ensures that the boundary of each of these components is a single torus.
For the same reason, together with the first condition in Lemma~\ref{lem:existence},~$P\times\partial D^2$ lifts to a single copy
of~$P\times\partial D^2\subset\partial P_G(\tau_1,\tau_2)$. Combining these remarks, we get
\[
\partial P_G(\tau_1,\tau_2)=(\widehat{X}_{\widehat{\tau_1}}\sqcup\widehat{X}_{\widehat{\tau_2}}\sqcup\widehat{X}_{\widehat{\tau_1 \tau_2}})\cup_{\partial P\times\partial D^2}(P \times\partial D^2)\,.
\]

Before applying the Novikov-Wall theorem, we must slightly modify~$P_G(\tau_1,\tau_2)$, as follows.
Consider the space~$\widetilde{P}_G(\tau_1,\tau_2)$ given by
\[
\widetilde{P}_G(\tau_1,\tau_2)=P_G(\tau_1,\tau_2) \cup_{P \times\partial D^2} (P \times D^2)\,.
\]
By the discussion above, this manifold has boundary
\[
\partial \widetilde{P}_G(\tau_1,\tau_2)=\widehat{Y}_{\widehat{\tau_1}} \sqcup \widehat{Y}_{\widehat{\tau_2}} \sqcup \widehat{Y}_{\widehat{\tau_1 \tau_2}}\,,
\]
where~$\widehat{Y}_{\widehat{\tau}}$ is the {\em closed\/}~$3$-manifold given by~$\widehat{Y}_{\widehat{\tau}}=\widehat{X}_{\widehat{\tau}}\cup_{S^1\times\partial D^2}(S^1\times D^2)$.

\begin{proposition}
\label{prop: P4k}
For any~$\omega$ and~$\tau_1,\tau_2$ as above, we have
\[
\sigma_\omega(P_G(\tau_1,\tau_2)) \simeq \sigma_\omega(\widetilde{P}_G(\tau_1,\tau_2)) =
\mathit{Maslov}(\mathcal{F}_\omega(\overline{\tau}_1),\Delta, \mathcal{F}_\omega(\tau_2))\,.
\]
\end{proposition}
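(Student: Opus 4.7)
For the first approximate equality, I will apply the Novikov--Wall theorem (Theorem~\ref{thm: Wall}) to the gluing $\widetilde{P}_G = P_G \cup (P \times D^2)$ performed along the (lifted) 3-manifold $P \times \partial D^2$. This yields
\[
\sigma_\omega(\widetilde{P}_G) - \sigma_\omega(P_G) = \sigma_\omega(P \times D^2) + \mathit{Maslov}(L_0,L_1,L_2)
\]
for Lagrangians $L_0,L_1,L_2$ in the $\omega$-eigenspace of $H_1(P \times \partial D^2)$. Since $P$ has vanishing $H_2$, the first term on the right is zero, and the Maslov term is bounded in absolute value by the dimension of this $\omega$-eigenspace, a quantity depending only on $c$ and $\omega$ (not on $\tau_1,\tau_2$). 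This gives the $\simeq$ estimate.

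For the main equality, the plan is to apply Novikov--Wall a second time, cutting $\widetilde{P}_G$ along a 3-manifold $X_0$ chosen so that both pieces have vanishing $\omega$-signature and the three resulting Lagrangians are $\mathcal{F}_\omega(\overline{\tau}_1)$, $\Delta$ and $\mathcal{F}_\omega(\tau_2)$. I will pick an arc $\gamma \subset P$ with endpoints on the outer boundary of the pair of pants, separating the two inner boundary circles and avoiding $J_1 \cup J_2$ (this is the arc labelled $\gamma$ in Figure~\ref{fig:pants}). It divides $P$ into two annular regions $P_1, P_2$, with $P_i$ containing the inner boundary $\partial_i$ together with $J_i$. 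The preimage of $\gamma \times D^2$ in $P_G$, together with the associated cap coming from $P \times D^2$, forms a properly embedded 3-manifold $X_0 \cong \gamma \times \Sigma_c$, where $\Sigma_c$ is the closed surface obtained from $\widehat{D}_c$ by capping its single boundary circle (recall Lemma~\ref{lemma: boundary}). This $X_0$ splits $\widetilde{P}_G$ into $M_1 \cup M_2$ with $M_i$ lying above $P_i$.

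The crucial step is the identification $M_i \cong \widehat{Y}_{\widehat{\tau}_i} \times I$. Writing $P_i \cong S^1 \times I_s$, the product structure on $P_i \times D^2 \cong (S^1 \times D^2) \times I_s$ transfers to the branch set, which is trivial outside $J_i$ and equals $I_i \times \tau_i$ on $J_i$; an isotopy identifies this surface with $\widehat{\tau}_i \times I_s$, where $\widehat{\tau}_i$ sits naturally in the solid torus slice. Taking the $G$-branched cover and capping in the $\partial D^2$-direction yields the cylinder $\widehat{Y}_{\widehat{\tau}_i} \times I_s$, whose $\omega$-signature vanishes as that of a product with an interval. Hence Wall's theorem gives $\sigma_\omega(\widetilde{P}_G) = \mathit{Maslov}(L_1, L_0, L_2)$, where $L_0, L_1, L_2$ live in $H_1(\Sigma)_\omega$ with $\Sigma = \partial X_0 \cong \Sigma_c \sqcup \Sigma_c$.

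Finally, I identify the Lagrangians. Since $\omega \in \mathbb{T}_P^\mu$ is a nontrivial character, the $G$-fixed boundary class of $\widehat{D}_c$ lies outside the $\omega$-eigenspace, so $H_1(\Sigma_c)_\omega \cong H_1(\widehat{D}_c)_\omega$ and $H_1(\Sigma)_\omega \cong H_1(\widehat{D}_c)_\omega \oplus H_1(\widehat{D}_c)_\omega$. The cut $X_0 = \gamma \times \Sigma_c$ deformation retracts onto $\Sigma_c$, giving $L_0 = \Delta$. The side $X_1 = \partial M_1 \setminus X_0$ decomposes as $\widehat{Y}_{\widehat{\tau}_1}$ (a closed component disjoint from $\Sigma$, hence irrelevant for $L_1$) together with the capped cover above $\alpha_1 \times D^2$, where $\alpha_1$ is the outer arc on the $P_1$-side of $\gamma$; this second piece is (a cap of) $\widehat{X}_{\overline{\tau}_1}$, the horizontal reflection of $\tau_1$ appearing because $\alpha_1$ traverses the outer boundary in the direction opposite to that of $J_1$. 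Remark~\ref{rem: Fcharac} then yields $L_1 = \mathcal{F}_\omega(\overline{\tau}_1)$, and analogously $L_2 = \mathcal{F}_\omega(\tau_2)$. The main technical obstacle I expect is to justify the diffeomorphism $M_i \cong \widehat{Y}_{\widehat{\tau}_i} \times I$ rigorously through the isotopy of the branch surface in $P_i \times D^2$, and to track the orientations carefully so that $X_1$ indeed realizes $\overline{\tau}_1$ rather than $\tau_1$.
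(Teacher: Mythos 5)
Your proposal is correct and follows essentially the same route as the paper's proof: a first application of Novikov--Wall to the capping $\widetilde{P}_G=P_G\cup_{P\times\partial D^2}(P\times D^2)$ (with the Maslov correction uniformly bounded by the size of $H_1$ of the tori/eigenspace over $P\times\partial D^2$), then a cut of the pair of pants along the arc $\gamma$ into two annuli whose lifts are products $I_i\times\widehat{Y}_{\widehat{\tau}_i}$ with vanishing signature, followed by the identification of the three kernels with $\mathcal{F}_\omega(\overline{\tau}_1)$, $\Delta$, $\mathcal{F}_\omega(\tau_2)$ via Remark~\ref{rem: Fcharac} (the paper additionally invokes Lemma~\ref{lemma:Maslov}(iii) to absorb the sign involution $\psi=(-\mathit{id})\oplus\mathit{id}$, which is the orientation bookkeeping you flag at the end).
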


\begin{proof}
Let us start by applying Novikov-Wall to~$X_0=P \times\partial D^2\subset\widetilde{P}_G(\tau_1,\tau_2))=M$. Note that the other corresponding spaces are given
by~$M_1=P_G(\tau_1,\tau_2)$,~$M_2=P\times D^2$ whose signature vanishes as it has no degree~$2$ homology, while~$\Sigma$ consists of a union of three tori.
Therefore, we immediately obtain that the difference between the~$\omega$-signatures of~$P_G(\tau_1,\tau_2)$ and~$\widetilde{P}_G(\tau_1,\tau_2)$ is uniformly bounded.

To show the second equality, start by cutting the pair of pants~$P$ along the path~$\gamma$ illustrated in Figure~\ref{fig:pants}.
This splits~$P$ into two cylinders~$C_1=I_1 \times S^1$ and~$C_2=I_2 \times S^1$; let us analyze the corresponding splitting of the manifold~$\widetilde{P}_G(\tau_1,\tau_2)$.
By construction,~$\gamma\times D^2\subset P\times D^2$ lifts to~$\gamma\times\widehat{D}_c=\widehat{X}_{\mathit{id}_c}$. In~$\widetilde{P}_G(\tau_1,\tau_2)$, the corresponding manifold is~$X_0:=\widehat{Y}_{\mathit{id}_c}$, whose boundary is given by~$\Sigma := \partial X_0$ which consists of two copies of~$\widehat{D}_c\cup_{\partial D^2} D^2$.
Similarly, the space~$C_i\times D^2\subset P\times D^2$ lifts to~$I_i \times \widehat{X}_{\widehat{\tau_i}}\subset P_G(\tau_1,\tau_2)$ (for~$i=1,2$).
In~$\widetilde{P}_G(\tau_1,\tau_2)$, the corresponding manifold is~$M_i:=I_i \times \widehat{Y}_{\widehat{\tau_i}}$.
As these manifolds are of the form~$[0,1] \times N^3$, for some~$3$-manifold~$N^3$, their signature vanishes. Moreover, the manifolds~$\widehat{Y}_{\widehat{\tau_i}}$ being closed,  we can apply the Novikov-Wall additivity theorem. Writing~$X_i=\partial M_i\setminus X_0$ for~$i=1,2$, we obtain
\begin{align*}
\sigma_\omega(\widetilde{P}_G(\tau_1,\tau_2))&= \sigma_\omega(M_1)+\sigma_\omega(M_2)+\mathit{Maslov}((L_1)_{\omega},(L_0)_{\omega},(L_2)_{\omega})\\
	&=\mathit{Maslov}((L_1)_{\omega},(L_0)_{\omega},(L_2)_{\omega})\,,
\end{align*}
where~$(L_i)_{\omega}$ is the kernel of the map induced by the inclusion of~$\Sigma$ in~$X_i$ ($i=0,1,2$). We now determine these spaces~$(L_i)_{\omega}$.

First, one can check that
\[
\partial X_0=\partial X_1 =\partial X_2 = \Sigma = (\widehat{D}_c\cup_{\partial D^2} D^2)\sqcup(\widehat{D}_c\cup_{\partial D^2} D^2)\,.
\]
As~$\widehat{D}_c$ is a compact orientable surface with one boundary component, its first homology is unaffected by capping off its boundary with a disk.
Therefore, the spaces~$H_1(\Sigma)$ and~$H_1(\widehat{D}_c) \oplus H_1(\widehat{D}_c)$ are canonically isomorphic, and so are the corresponding generalized eigenspaces.
An easy Mayer-Vietoris argument shows that the inclusion~$\widehat{X}_{\mathit{id}_c}\subset X_0$ induces an isomorphism on the first homology.
Therefore, the map~$H_1(\Sigma)_{\omega} \rightarrow H_1(X_0)_{\omega}$ can be identified with the inclusion induced map
\[
j_0\colon H_1(\widehat{D}_c)_{\omega} \oplus H_1(\widehat{D}_c)_{\omega} \cong H_1(\partial \widehat{X}_{\mathit{id}_c} )_{\omega} \rightarrow H_1(\widehat{X}_{\mathit{id}_c})_{\omega}\,.
\]
Similarly, the map~$H_1(\Sigma)_{\omega} \rightarrow H_1(X_2)_{\omega}$ can be identified with the inclusion induced map
\[
j_2\colon H_1(\widehat{D}_c)_{\omega} \oplus H_1(\widehat{D}_c)_{\omega} \cong H_1(\partial \widehat{X}_{\tau_2} )_{\omega} \rightarrow H_1(\widehat{X}_{\tau_2})_{\omega}
\]
while~$H_1(\Sigma)_{\omega} \rightarrow H_1(X_1)_{\omega}$ is the inclusion induced map
\[
j_1\colon H_1(\widehat{D}_c)_{\omega} \oplus H_1(\widehat{D}_c)_{\omega} \cong H_1(\partial \widehat{X}_{\overline{\tau}_1} )_{\omega} \rightarrow H_1(\widehat{X}_{\overline{\tau}_1})_{\omega}\,.
\]
(The appearance of the reflection of~$\tau_1$ should be clear from Figure~\ref{fig:pants}.)

Summarizing, we have shown that
\[
\sigma_\omega(\widetilde{P}_G(\tau_1, \tau_2))=\mathit{Maslov}(\mathit{Ker}(j_1),\mathit{Ker}(j_0),\mathit{Ker}(j_2))\,.
\]
Applying Remark~\ref{rem: Fcharac} to each of the three tangles~$\overline{\tau}_1$,~$\mathit{id}_c$ and~$\tau_2$, and the last point of Lemma~\ref{lemma:Maslov}
to the unitary involution~$\psi=(-\mathit{id})\oplus\mathit{id}$, we have
\begin{align*}
\sigma_\omega(\widetilde{P}_G(\tau_1, \tau_2))&=\mathit{Maslov}(\psi(\mathcal{F}_\omega(\overline{\tau}_1)),\psi(\mathcal{F}_\omega(\mathit{id}_c)),\psi(\mathcal{F}_\omega(\tau_2)))\\
	&=\mathit{Maslov}(\mathcal{F}_\omega(\overline{\tau}_1),\mathcal{F}_\omega(\mathit{id}_c),\mathcal{F}_\omega(\tau_2))\,,
\end{align*}
and the proof is completed.
\end{proof}

\subsection{The manifold~$C_G(\tau)$.}
\label{sub:C}

Next, we build the manifold that encodes the signature of the tangle closure.
This will require some notations. Let~$D^4$ denote the (oriented) unit~$4$-ball,~$S^3=\partial D^4$ its oriented boundary, and~$T=S^1\times D^2\subset S^3$ the standardly embedded solid torus.

Closing a colored tangle~$\tau\subset [0,1]\times D^2$ yields a colored link~$\widehat{\tau}\subset T$.
Consider a collection~$S(\tau)$ of surfaces that bound~$\widehat{\tau}\subset S^3$ and that are ``in general position'' in~$D^4$.
In other words,~$S(\tau)$ consists of a collection of surfaces~$S_1\cup\dots\cup S_\mu$ smoothly embedded in~$D^4$, whose only intersections are transverse double points (between
different surfaces), and such that for all~$i$,~$S_i$ meets~$S^3=\partial D^4$ along the sublink of~$\widehat{\tau}\subset T\subset S^3$ of color~$i$.
(Such a surface can be obtained, for example, by taking any~$C$-complex for~$\widehat{\tau}$ and by pushing it inside the~$4$-ball.)

Let us further assume that~$S(\tau)$ meets the radius one-half sphere~$\frac{1}{2}S^3$ along the closure of the trivial tangle~$\mathit{id}_c$ (i.e. the~$n$-component unlink)
in a way that respects the coloring~$c$. Finally, we shall assume that the intersection of~$S(\tau)$ with the closure of~$D^4\setminus\frac{1}{2}D^4$ is contained in
the subspace~$N$ of~$\mathit{cl}(D^4\setminus\frac{1}{2}D^4)\cong [0,1]\times S^3$ given by
\[
N = \lbrace x \in D^4 \ | \ 1/2 \leq ||x|| \leq 1, \ x/||x|| \in T \rbrace\, \cong \, [0,1] \times T\,.
\]
One easily checks that such a surface can be obtained by pushing a~$C$-complex for~$\widehat{\tau}$ inside~$D^4$ and isotopying it in the appropriate way.
 
A standard computation shows that~$H_1(D^4 \setminus S(\tau))$ is free abelian of rank~$\mu$. Let
\[
C_G(\tau)\rightarrow N
\]
be the~$G$-cover branched over~$S(\tau) \cap N$ induced by the composition of the inclusion induced
homomorphism~$H_1(N \setminus(S(\tau) \cap N)) \rightarrow H_1(D^4 \setminus S(\tau))$ with the canonical projection~$H_1(D^4 \setminus S(\tau))\rightarrow G$.
Let us analyse its boundary. Writing~$C=[0,1] \times S^1$, the boundary of~$N=C \times D^2$ can be written
as~$\partial(C\times D^2)=(\partial C \times D^2) \cup_{\partial C \times \partial D^2} (C \times \partial D^2)$.
Thanks to the conditions stated above,~$\partial C \times D^2$ lifts
to~$\widehat{X}_{\widehat{\tau}}\sqcup \widehat{X}_{\widehat{\mathit{id}_c}}$. On the other hand, as~$\omega$ is
in~$\mathbb{T}^\mu_c \cap \mathbb{T}^\mu_P$, the space~$C \times \partial D^2$ lifts to a single copy of~$C \times \partial D^2$. Summarizing, we get
\[
\partial C_G(\tau) = (\widehat{X}_{\widehat{\tau}} \sqcup \widehat{X}_{\widehat{\mathit{id}_c}}) \cup_{\partial C \times \partial D^2} (C \times \partial D^2)\,.
\]
We are now ready to compute the~$\omega$-signature of~$C_G(\tau)$.

\begin{proposition}
\label{prop:C4k}
For any~$\omega$ and~$\tau$ as above,~$\sigma_\omega(C_G(\tau))\simeq\sign_\omega(\widehat{\tau})$.
\end{proposition}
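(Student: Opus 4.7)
The plan is to relate $C_G(\tau)$ to a branched cover of $D^4$ whose $\omega$-signature equals $\sign_\omega(\widehat{\tau})$ exactly, losing only uniformly bounded error at each step. Mirroring the construction of $\widetilde{P}_G(\tau_1,\tau_2)$, I first set
\[
\widetilde{C}_G(\tau) = C_G(\tau) \cup_{C \times \partial D^2} (C \times D^2),
\]
so that $\partial \widetilde{C}_G(\tau) = \widehat{Y}_{\widehat{\tau}} \sqcup \widehat{Y}_{\widehat{\mathit{id}_c}}$. Since $C \times D^2$ has vanishing $\omega$-signature (no degree~$2$ homology) and the gluing locus $C \times \partial D^2$ has topology depending only on $c$, an application of Novikov--Wall would give $\sigma_\omega(C_G(\tau)) \simeq \sigma_\omega(\widetilde{C}_G(\tau))$: the Maslov correction is the signature of a Hermitian form on a subspace of $H_1(\partial C \times \partial D^2)_\omega$, whose dimension is bounded in terms of $c$ and $\omega$ alone.

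Next I would close off the boundary component $\widehat{Y}_{\widehat{\mathit{id}_c}}$. The closure $\widehat{\mathit{id}_c}$ is the $n$-component colored unlink, which bounds a standard disjoint union of flat disks $\Delta$ in a $4$-ball $B$. Let $V$ be the $G$-cover of $B$ branched over $\Delta$: then $\partial V = \widehat{Y}_{\widehat{\mathit{id}_c}}$, and both $V$ and $\sigma_\omega(V)$ depend only on $c$ and $\omega$. Gluing $V$ to $\widetilde{C}_G(\tau)$ along $\widehat{Y}_{\widehat{\mathit{id}_c}}$ produces a compact oriented $4$-manifold $W_G(\tau)$ with boundary $\widehat{Y}_{\widehat{\tau}}$. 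Up to an unbranched extension over the complement of $\frac{1}{2}D^4 \cup N$ in $D^4$ (a product contributing nothing to the signature), $W_G(\tau)$ is the $G$-cover of $D^4$ branched over the full surface $S(\tau) \cup \Delta$, which bounds $\widehat{\tau}$ in $S^3$.

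The crucial input is then the four-dimensional reinterpretation of the multivariable signature: for any $C$-complex-like surface $F \subset D^4$ bounding a colored link $L \subset S^3$, the $\omega$-signature of the $G$-branched cover of $D^4$ along $F$ equals $\sign_\omega(L)$. Applied to $W_G(\tau)$, this yields $\sigma_\omega(W_G(\tau)) = \sign_\omega(\widehat{\tau})$. A second application of Novikov--Wall along $\widehat{Y}_{\widehat{\mathit{id}_c}}$ will give
\[
\sigma_\omega(W_G(\tau)) \simeq \sigma_\omega(\widetilde{C}_G(\tau)) + \sigma_\omega(V),
\]
with bounded Maslov correction. Chaining these estimates and absorbing the bounded constant $\sigma_\omega(V)$ into $\simeq$ would yield $\sigma_\omega(C_G(\tau)) \simeq \sign_\omega(\widehat{\tau})$.

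The hard part will be establishing the four-dimensional reinterpretation $\sigma_\omega(W_G(\tau)) = \sign_\omega(\widehat{\tau})$ in the colored setting, that is, identifying the restriction of the intersection form of $W_G(\tau)$ to its $\omega$-eigenspace with the Hermitian matrix $H(\omega)$ built from the colored Seifert-type pairings $A^\varepsilon$. The technical core is the compatibility between the $G$-equivariant intersection form on the branched cover and the colored Seifert pairings; Corollary~\ref{cor:eval} should reduce the computation to the componentwise evaluation at $\chi$ of a matrix over $\C[G]$, but the clasp contributions to the $C$-complex must be tracked with care.
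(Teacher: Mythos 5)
Your overall strategy --- cap off $C_G(\tau)$ so as to recognize a branched cover of $D^4$, invoke the four-dimensional interpretation of the multivariable signature (this is exactly \cite[Theorem 6.1]{CF}, which the paper simply cites rather than reproves), and control the corrections by Novikov--Wall --- is the right one and is the one the paper follows. However, your first capping is the wrong one, and this breaks the two subsequent steps. Gluing $C\times D^2$ to $C_G(\tau)$ along $C\times\partial D^2$ fills in, over each boundary circle, the connected lift $\widetilde{m}$ of the meridian $m=\{pt\}\times\partial D^2$ of $\partial T$; downstairs this reconstructs $C\times S^2$, so the boundary components $\widehat{Y}_{\widehat{\tau}}$ and $\widehat{Y}_{\widehat{\mathit{id}_c}}$ of your $\widetilde{C}_G(\tau)$ are branched covers of $S^1\times S^2$, not of $S^3$. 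In particular $\partial V$, the $G$-cover of $S^3$ branched over the unlink, is a \emph{different} Dehn filling of $\widehat{X}_{\widehat{\mathit{id}_c}}$ (it fills the lifts of the longitude $\lambda=S^1\times\{x\}$, which maps trivially to $G$ and hence lifts to $|G|$ disjoint circles, rather than $\widetilde{m}$), so the identification $\partial V=\widehat{Y}_{\widehat{\mathit{id}_c}}$ fails; already for $n=\mu=1$ one gets $S^1\times S^2$ versus $S^3$. Consequently your $W_G(\tau)$ has boundary $\widehat{Y}_{\widehat{\tau}}$ rather than the branched cover of $S^3$ over $\widehat{\tau}$, it is not the branched cover of $D^4$ along $S(\tau)\cup\Delta$ ``up to an unbranched product'': the product piece you would need, namely the lift of $\mathit{cl}(D^4\setminus(\tfrac12 D^4\cup N))\cong[0,1]\times D^2\times\partial D^2$, must be attached along the very same $3$-manifold $C\times\partial D^2$ that you have already used up with an incompatible filling. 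So the crucial equality $\sigma_\omega(W_G(\tau))=\sign_\omega(\widehat{\tau})$ is not available for your manifold.

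The fix is the paper's proof: instead of $C\times D^2$, glue the single lift of $[0,1]\times(D^2\times\partial D^2)$ (the complementary solid torus of $T$ in $S^3$, times the radial interval) along $X_0=C\times\partial D^2$. This reconstructs the $G$-cover $M$ of $[0,1]\times S^3$ branched over $S(\tau)\cap N$, and Novikov--Wall along the two lifted tori gives $\sigma_\omega(M)\simeq\sigma_\omega(C_G(\tau))$. The boundary component of $M$ over $\tfrac12 S^3$ is now genuinely the closed branched cover of $\tfrac12 S^3$ over $\widehat{\mathit{id}_c}$, so it is capped by $W_{\widehat{\mathit{id}_c}}$, the branched cover of $\tfrac12 D^4$ over the inner part of $S(\tau)$; since the gluing locus is closed, Novikov--Wall is exactly additive there and $\sigma_\omega(W_{\widehat{\mathit{id}_c}})=\sign_\omega(\widehat{\mathit{id}_c})=0$. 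This yields $\sigma_\omega(W_{\widehat{\tau}})=\sigma_\omega(M)\simeq\sigma_\omega(C_G(\tau))$ with $W_{\widehat{\tau}}$ the honest branched cover of $D^4$ along $S(\tau)$, to which \cite[Theorem 6.1]{CF} applies.
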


\begin{proof}
Let~$W_{\widehat{\tau}} \rightarrow D^4$ be the~$G$-cover of~$D^4$ branched along~$S(\tau)\subset D^4$ given by the
homomorphism~$H_1(D^4 \setminus S(\tau))\rightarrow G$. By~\cite[Theorem 6.1]{CF},~$\sigma_\omega(W_{\widehat{\tau}})=\sign_\omega(\widehat{\tau})$, so we are left with the proof
that~$\sigma_\omega(W_{\widehat{\tau}})\simeq\sigma_\omega(C_G(\tau))$. To do so, we will apply the Novikov-Wall theorem twice.

First, the space~$\mathit{cl}(D^4 \setminus\frac{1}{2}D^4)\cong [0,1]\times S^3$ can be obtained by gluing a copy of~$[0,1]\times D^2\times \partial D^2$ to~$N$
along~$[0,1]\times S^1\times\partial D^2$.
Lifting this to the covers, one gets a manifold~$M$ obtained by gluing a copy of~$[0,1]\times D^2\times\partial D^2$ to~$C_G(\tau)$ along~$X_0:=[0,1]\times S^1\times\partial D^2$,
with~$\Sigma:=\partial X_0$ consisting of two disjoint tori.
As the boundary of~$C_G(\tau)$ is~$(\widehat{X}_{\widehat{\tau}} \sqcup \widehat{X}_{\widehat{\mathit{id}_c}}) \cup_{\Sigma} X_0$
while the boundary of~$[0,1]\times D^2\times \partial D^2$ is~$(\{0,1\}\times D^2\times \partial D^2)\cup_{\Sigma} X_0$, Novikov-Wall yields 
\[
\sigma_\omega(M) \simeq \sigma_\omega(C_G(\tau))+\sigma_\omega([0,1]\times D^2\times\partial D^2)=\sigma_\omega(C_G(\tau))\,.
\]
Next, glue the ball~$\frac{1}{2}D^4$ to~$\mathit{cl}(D^4 \setminus \frac{1}{2}D^4)$ along~$\frac{1}{2}S^3$ in order to obtain~$D^4$. Lifting this to the covers, it corresponds to
recovering the manifold~$W_{\widehat{\tau}}$ by gluing~$W_{\widehat{\mathit{id}_c}}$ to~$M$ along the preimage of~$\frac{1}{2}S^3$. As the latter space is closed,
Novikov-Wall additivity applies trivially and we get 
\[
\sigma_\omega(W_{\widehat{\tau}})=\sigma_\omega(M)+\sigma_\omega(W_{\widehat{\mathit{id}_c}})=\sigma_\omega(M)+\sign_\omega(\widehat{\mathit{id}_c})=\sigma_\omega(M)\,.
\]
This concludes the proof.
\end{proof}

\subsection{The manifold~$M_G(\tau_1,\tau_2)$.}
\label{sub:M}

\begin{figure}[tb]
\labellist\small\hair 2.5pt
\pinlabel {$P_G(\tau_1,\tau_2)$} at 172 278
\pinlabel {$P_G(\mathit{id},\mathit{id})$} at 157 51
\pinlabel {$C_G(\tau_1)$} at 56 204
\pinlabel {$C_G(\tau_2)$} at 625 203
\pinlabel {$C_G(\tau_1\tau_2)$} at 320 185
\pinlabel {$\gamma$} at 348 24
\endlabellist
\centering
\includegraphics[width=0.7\textwidth]{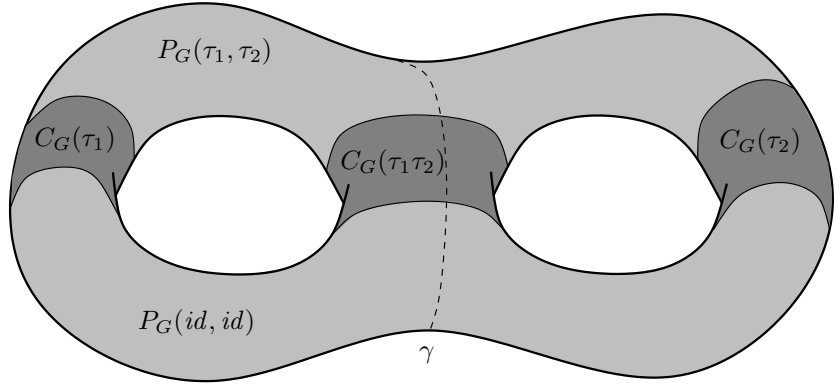}
\caption{The manifold~$M_G(\tau_1,\tau_2)$.}
\label{fig:M}
\end{figure}

Our goal is now to glue several copies of the manifolds~$C_G(\tau)$ and~$P_G(\tau_1, \tau_2)$ along
their boundary in order to obtain an oriented~$4$-manifold~$M_G(\tau_1,\tau_2)$. Recall that these boundaries are given by
\[
\partial C_G(\tau)=(\widehat{X}_{\widehat{\tau}} \sqcup \widehat{X}_{\widehat{\mathit{id}_c}}) \cup_{\partial C\times\partial D^2} (C\times\partial D^2)\,,
\]
with~$C=[0,1] \times S^1$, while
\[
\partial P_G(\tau_1,\tau_2)=(\widehat{X}_{\widehat{\tau_1}}\sqcup\widehat{X}_{\widehat{\tau_2}}\sqcup\widehat{X}_{\widehat{\tau_1\tau_2}})\cup_{\partial P \times \partial D^2} (P \times \partial D^2)\,,
\]
where~$P$ denotes the pair of pants. It therefore makes sense to define the manifold~$M_G(\tau_1, \tau_2)$ by gluing~$P_G(\tau_1, \tau_2) $ ``on one side'' of the disjoint union of~$C_G(\tau_1)$,~$C_G(\tau_2)$ and~$ C_G(\tau_1 \tau_2)$, and~$P_G(\mathit{id}_c,\mathit{id}_c)$ ``on the other side'' (see Figure~\ref{fig:M}).
More precisely, set
\[
M_G(\tau_1, \tau_2)=P_G(\tau_1,\tau_2) \cup_{\widehat{X}_{\widehat{\tau_1}} \sqcup \widehat{X}_{\widehat{\tau_2}} \sqcup \widehat{X}_{\widehat{\tau_1 \tau_2}}} (C_G(\tau_1) \sqcup C_G(\tau_2) \sqcup C_G(\tau_1 \tau_2)) \cup_{\widehat{X}_{\widehat{\mathit{id}_c}} \sqcup \widehat{X}_{\widehat{\mathit{id}_c}} \sqcup \widehat{X}_{\widehat{\mathit{id}_c}}} P_G(\mathit{id}_c,\mathit{id}_c)\,.
\]
By construction, this~$4$-manifold is a covering of
\[
(P \times D^2) \cup_{\partial P \times D^2} (C\times D^2 \sqcup C\times D^2 \sqcup C\times D^2 ) \cup_{\partial P \times D^2} (P \times D^2)=\Sigma_2\times D^2\,,
\]
where~$\Sigma_2$ is the closed orientable surface of genus~$2$ (see Figure~\ref{fig:M}), branched over
\[
T(\tau_1, \tau_2 )=R(\tau_1, \tau_2) \cup_{\widehat{\tau_1} \sqcup \widehat{\tau_2} \sqcup \widehat{\tau_1 \tau_2} } (S(\tau_1) \sqcup  S(\tau_2) \sqcup S(\tau_1 \tau_2)) \cup_{\widehat{\mathit{id}_c} \sqcup \widehat{\mathit{id}_c} \sqcup \widehat{\mathit{id}_c} } R(\mathit{id}_c,\mathit{id}_c)\,.
\]
Moreover, its~$\omega$-signature is precisely what we wish to bound, as shown by the following proposition.
 
\begin{proposition}
\label{prop: signatureM}
The~$4$-manifold~$M_G(\tau_1,\tau_2)$ can be endowed with an orientation, so that
\[
\sigma_\omega(M_G(\tau_1,\tau_2))\simeq\sign(\widehat{\tau_1 \tau_2})-sign_\omega(\widehat{\tau_1})-sign_\omega(\widehat{\tau_2})-\mathit{Maslov}(\mathcal{F}_\omega(\overline{\tau}_1),\mathcal{F}_\omega(\mathit{id}_c),\mathcal{F}_\omega(\tau_2))\,.
\]
\end{proposition}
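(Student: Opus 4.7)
The plan is to apply the Novikov-Wall additivity theorem (Theorem~\ref{thm: Wall}) to the two-layer decomposition of $M_G(\tau_1,\tau_2)$ appearing in its very definition, and then to substitute the signature computations of Propositions~\ref{prop: P4k} and~\ref{prop:C4k}. The key observation making the $\simeq$ go through automatically is that at each gluing layer the splitting $3$-manifold is a disjoint union of three pieces of the form $\widehat{X}_{\widehat{\tau}}$, each having a single torus as boundary by Lemma~\ref{lemma: boundary} (invoked exactly as in the proof of Proposition~\ref{prop: P4k}). Consequently the Novikov-Wall correction at each gluing is the Maslov index of three Lagrangians in $H_1(\Sigma)_\omega$ for $\Sigma$ a disjoint union of three tori, hence a complex vector space of dimension at most $6$; the correction is therefore bounded by $6$ independently of $\tau_1$ and $\tau_2$.

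First I would fix an orientation on $M_G(\tau_1,\tau_2)$ and check, via a direct geometric inspection of Figure~\ref{fig:M}, that this can be done so that the orientations induced on the five constituent pieces agree with the natural ones on $C_G(\tau_1\tau_2)$ and $P_G(\mathit{id}_c,\mathit{id}_c)$, and are opposite to the natural ones on $P_G(\tau_1,\tau_2)$, $C_G(\tau_1)$ and $C_G(\tau_2)$. (Any ambiguity on the orientation of $P_G(\mathit{id}_c,\mathit{id}_c)$ is harmless, since its signature will turn out to be $\simeq 0$ anyway.) Since reversing orientation negates the signature, this fixes the signs appearing below.

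Next, I would apply Novikov-Wall first along $X_0 = \widehat{X}_{\widehat{\tau_1}} \sqcup \widehat{X}_{\widehat{\tau_2}} \sqcup \widehat{X}_{\widehat{\tau_1\tau_2}}$ to separate $P_G(\tau_1,\tau_2)$ from the rest, and then a second time along $X_0' = \widehat{X}_{\widehat{\mathit{id}_c}} \sqcup \widehat{X}_{\widehat{\mathit{id}_c}} \sqcup \widehat{X}_{\widehat{\mathit{id}_c}}$ to separate $P_G(\mathit{id}_c,\mathit{id}_c)$ from the three $C_G$ pieces. Absorbing the two bounded Wall corrections into the $\simeq$ yields
\[
\sigma_\omega(M_G(\tau_1,\tau_2)) \simeq -\sigma_\omega(P_G(\tau_1,\tau_2)) - \sigma_\omega(C_G(\tau_1)) - \sigma_\omega(C_G(\tau_2)) + \sigma_\omega(C_G(\tau_1\tau_2)) + \sigma_\omega(P_G(\mathit{id}_c,\mathit{id}_c)).
\]

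To conclude, I would substitute Proposition~\ref{prop:C4k} ($\sigma_\omega(C_G(\tau)) \simeq \sign_\omega(\widehat{\tau})$), Proposition~\ref{prop: P4k} ($\sigma_\omega(P_G(\tau_1,\tau_2)) \simeq \mathit{Maslov}(\mathcal{F}_\omega(\overline{\tau}_1), \mathcal{F}_\omega(\mathit{id}_c), \mathcal{F}_\omega(\tau_2))$, using that $\mathcal{F}_\omega(\mathit{id}_c) = \Delta$), and the vanishing $\sigma_\omega(P_G(\mathit{id}_c,\mathit{id}_c)) \simeq \mathit{Maslov}(\Delta, \Delta, \Delta) = 0$ obtained from Proposition~\ref{prop: P4k} specialized to $\tau_1=\tau_2=\mathit{id}_c$ together with Lemma~\ref{lemma:Maslov}(ii). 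This will directly yield the claimed formula. The hard part is not the algebra but the geometric bookkeeping of orientations at the six gluing faces visible in Figure~\ref{fig:M}; the boundedness of both Novikov-Wall corrections is automatic from the torus structure of each $\partial \widehat{X}_{\widehat{\tau}}$.
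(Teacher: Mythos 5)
Your overall strategy is the same as the paper's: apply Novikov--Wall twice, first along $\widehat{X}_{\widehat{\tau_1}}\sqcup\widehat{X}_{\widehat{\tau_2}}\sqcup\widehat{X}_{\widehat{\tau_1\tau_2}}$ and then along the three copies of $\widehat{X}_{\widehat{\mathit{id}_c}}$, absorb the two correction terms into $\simeq$ because each splitting $3$-manifold has boundary a union of three tori (Lemma~\ref{lemma: boundary}), and then substitute Propositions~\ref{prop: P4k} and~\ref{prop:C4k} together with $\sigma_\omega(P_G(\mathit{id}_c,\mathit{id}_c))\simeq\mathit{Maslov}(\Delta,\Delta,\Delta)=0$ from Lemma~\ref{lemma:Maslov}(ii). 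All of that is fine and matches the paper.

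The gap is exactly in the step you defer to ``direct geometric inspection of Figure~\ref{fig:M}'': the orientation bookkeeping, which is the real content of this proposition, and the sign pattern you assert is not realizable. With a uniform choice of natural orientations one has $\partial C_G(\tau)\supset\widehat{X}_{\widehat{\tau}}\sqcup-\widehat{X}_{\widehat{\mathit{id}_c}}$ and $\partial P_G(\tau_1,\tau_2)\supset\widehat{X}_{\widehat{\tau_1}}\sqcup\widehat{X}_{\widehat{\tau_2}}\sqcup-\widehat{X}_{\widehat{\tau_1\tau_2}}$ (the inner and outer boundary circles of the cylinder and of the pair of pants induce opposite orientations relative to the fixed solid torus). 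Requiring adjacent pieces to induce opposite orientations on each of the six gluing faces then forces the pattern $(+,-,-,+,-)$ on $\bigl(P_G(\tau_1,\tau_2),C_G(\tau_1),C_G(\tau_2),C_G(\tau_1\tau_2),P_G(\mathit{id}_c,\mathit{id}_c)\bigr)$, up to a global reversal. Your pattern $(-,-,-,+,+)$ is neither: for instance $-P_G(\tau_1,\tau_2)$ and $-C_G(\tau_1)$ would both induce $-\widehat{X}_{\widehat{\tau_1}}$ on their common face, so they cannot be glued into an oriented manifold. You are right that the sign on $P_G(\mathit{id}_c,\mathit{id}_c)$ is harmless, but the sign on $P_G(\tau_1,\tau_2)$ is not: it determines the sign of the Maslov term in the conclusion, and hence the sign in Theorem~\ref{thm:main} itself, so it cannot be reverse-engineered from the statement. (This point is genuinely delicate: even the paper's own displayed computation produces $+\mathit{Maslov}(\mathcal{F}_\omega(\overline{\tau}_1),\Delta,\mathcal{F}_\omega(\tau_2))$ while the statement of the proposition records the opposite sign, which is further evidence that ``reading it off the figure'' is not a safe substitute for carrying out the boundary-orientation computation.)
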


\begin{proof}
We need to be more precise about the orientation of the~$4$-manifolds in play. First note that any (arbitrary but fixed) orientation on the cylinder~$C=[0,1]\times S^1$ and on
the unit disk~$D^2$ defines an orientation on their product~$C\times D^2$, which lifts to an orientation on the cover~$C_G(\tau)$, and induces an orientation
on~$\widehat{X}_{\widehat{\tau}}\sqcup\widehat{X}_{\widehat{\mathit{id}_c}}\subset\partial C_G(\tau)$. However, note that these two spaces are now endowed with {\em opposite\/}
orientations (with respect to a fixed orientation of the solid torus~$S^1\times D^2$ which lifts to an orientation of~$\widehat{X}_{\widehat{\tau}}$ for any tangle~$\tau$).
This can be written
\[
\partial C_G(\tau)\supset\widehat{X}_{\widehat{\tau}} \sqcup -\widehat{X}_{\widehat{\mathit{id}_c}}\,.
\]
By the same arguments, the fixed orientation on the pair of pants~$P$ (and on~$D^2$) induces an orientation on~$P_G(\tau_1,\tau_2)$ such that
\[
\partial P_G(\tau_1,\tau_2)\supset\widehat{X}_{\widehat{\tau_1}}\sqcup\widehat{X}_{\widehat{\tau_2}}\sqcup-\widehat{X}_{\widehat{\tau_1\tau_2}}\,.
\]
Therefore, for the manifold~$M_G(\tau_1,\tau_2)$ to be oriented, we need to paste {\em positively oriented\/} copies of~$P_G(\tau_1,\tau_2)$ and~$C_G(\tau_1\tau_2)$ together with
{\em negatively oriented\/} copies of~$C_G(\tau_1)$,~$C_G(\tau_2)$ and~$P_G(\mathit{id}_c,\mathit{id}_c)$ (or the opposite). It only remains to apply the Novikov-Wall
theorem a couple of times, as follows.
 
Let~$M$ be the manifold obtained by gluing~$P_G(\tau_1, \tau_2)$ to~$-C_G(\tau_1) \sqcup -C_G(\tau_2)\sqcup C_G(\tau_1 \tau_2)$ along
the~$3$-manifold~$X_0:=\widehat{X}_{\widehat{\tau_1}} \sqcup \widehat{X}_{\widehat{\tau_2}} \sqcup-\widehat{X}_{\widehat{\tau_1 \tau_2}}$ whose boundary~$\Sigma$ consists of~$3$ disjoint tori. By Novikov-Wall, Proposition~\ref{prop: P4k} and Proposition~\ref{prop:C4k}, we have
\begin{align*}
\sigma_\omega(M) &\simeq \sigma_\omega(P_G(\tau_1,\tau_2)) + \sigma_\omega(C_G(\tau_1 \tau_2))-\sigma_\omega(C_G(\tau_1))-\sigma_\omega(C_G(\tau_2))\\
&\simeq\mathit{Maslov}(\mathcal{F}_\omega(\overline{\tau}_1),\Delta,\mathcal{F}_\omega(\tau_2))+\sign(\widehat{\tau_1 \tau_2})-\sign_\omega(\widehat{\tau_1})-\sign_\omega(\widehat{\tau_2}).
\end{align*}
Applying the exact same line of reasoning to the gluing of~$P_G(\mathit{id}_c, \mathit{id}_c)$, the result follows from Proposition~\ref{prop: P4k} as
\[
\sigma_\omega(P_G(\mathit{id}_c, \mathit{id}_c))\simeq\mathit{Maslov}(\Delta,\Delta,\Delta)=0\,,
\]
by the second point of Lemma~\ref{lemma:Maslov}.
\end{proof}

To prove Theorem~\ref{thm:main}, it only remains to show that the~$\omega$-signature of~$M_G(\tau_1,\tau_2)$ vanishes up to an uniformly bounded additive constant.
This requires a small lemma.

\begin{lemma}
\label{lem: curve}
For well-chosen surfaces~$S(\tau_1)$,~$S(\tau_2)$ and~$S(\tau_1\tau_2)$ in the construction above,
the branched covering~$M_G(\tau_1,\tau_2)\to \Sigma_2\times D^2$ satisfies the following property:
there exists a curve~$\gamma$ in the genus~$2$ surface~$\Sigma_2$ such~$\gamma \times D^2$ intersects the branch set~$T(\tau_1, \tau_2)\subset\Sigma_2\times D^2$ in the~$n$ disjoint circles~$\gamma\times\lbrace x_1,\dots,x_n\rbrace$.
\end{lemma}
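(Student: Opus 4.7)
My approach rests on the observation that in the ``bottom'' copy of the pair of pants, corresponding to $P_G(\mathit{id}_c,\mathit{id}_c)$, the surface $R(\mathit{id}_c,\mathit{id}_c)$ is already a trivial product, so any simple closed curve contained in that pair of pants will satisfy the desired property. First I would unpack the definition from subsection~\ref{sub:P}: since the trivial tangle $\mathit{id}_c$ is $[0,1]\times\{x_1,\dots,x_n\}$, the piece $I_i\times\mathit{id}_c$ on each rectangle $J_i\times D^2 = I_i\times([0,1]\times D^2)$ reduces to $J_i\times\{x_1,\dots,x_n\}$, which matches seamlessly with the product piece $(P\setminus(J_1\cup J_2))\times\{x_1,\dots,x_n\}$. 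Writing $P_{\mathrm{bot}}\subset\Sigma_2$ for this bottom pair of pants, we obtain
$$
R(\mathit{id}_c,\mathit{id}_c)\;=\;P_{\mathrm{bot}}\times\{x_1,\dots,x_n\},
$$
and hence the branch set $T(\tau_1,\tau_2)$ restricted to $P_{\mathrm{bot}}\times D^2$ is precisely the trivial product $P_{\mathrm{bot}}\times\{x_1,\dots,x_n\}$.

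Next I would take $\gamma$ to be a simple closed curve lying in the interior of $P_{\mathrm{bot}}$, parallel in $P_{\mathrm{bot}}$ to one of its three boundary components. Such a curve is essential in $\Sigma_2$, being isotopic to one of the three waist circles along which $P_{\mathrm{bot}}$ is glued to the cylinder pieces $C\times D^2$ making up $\Sigma_2$. By the previous paragraph,
$$
(\gamma\times D^2)\cap T(\tau_1,\tau_2)\;=\;(\gamma\times D^2)\cap\bigl(P_{\mathrm{bot}}\times\{x_1,\dots,x_n\}\bigr)\;=\;\gamma\times\{x_1,\dots,x_n\},
$$
which is exactly the required disjoint union of $n$ circles.

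Since $\gamma$ can be chosen entirely inside the interior of $P_{\mathrm{bot}}$, away from the cylinder pieces $C\times D^2$ where the branch set involves the genuine surfaces $S(\tau_i)$, no additional property of the $S(\tau_i)$ is needed beyond the standing requirements of subsection~\ref{sub:C}; the phrase ``well-chosen'' is merely a reference to these. Should one wish to allow $\gamma$ to enter the cylinders as well (for instance, to take $\gamma$ to be a waist curve itself), a routine collar adjustment of the pushed-in $C$-complex, arranging $S(\tau_i)\cap([0,\epsilon]\times T)=[0,\epsilon]\times\widehat{\mathit{id}_c}$ in a collar of the inner level sphere $\tfrac12 S^3$, would suffice. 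I do not anticipate any serious obstacle: the lemma is essentially a bookkeeping observation exploiting the fact that $R(\mathit{id}_c,\mathit{id}_c)$ has product form.
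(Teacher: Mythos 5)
Your curve does satisfy the literal wording of the lemma, but it is not the curve the lemma is there to produce, and the argument cannot be repaired while staying inside $P_{\mathrm{bot}}$. The whole point of Lemma~\ref{lem: curve} is to feed Proposition~\ref{prop:vanish}, where one cuts $\Sigma_2$ along $\gamma$ to get a decomposition $\Sigma_2=\Sigma_1\cup_\gamma\Sigma_1$ into two once-punctured tori, lifting to $M_G(\tau_1,\tau_2)=Q_G(\tau_1)\cup_{\gamma\times\widehat{D}_c}Q_G(\tau_2)$ with all the $\tau_1$-data on one side and all the $\tau_2$-data on the other. So $\gamma$ must be a \emph{separating} curve of the type drawn in Figure~\ref{fig:M}. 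A simple closed curve contained in a pair of pants is either inessential or parallel to one of the three boundary circles; in $\Sigma_2$ the latter is a waist circle, which is non-separating (cutting along it leaves a connected genus-one surface with two boundary components), and the former bounds a disk. Neither gives the required splitting, so no curve lying entirely in $P_{\mathrm{bot}}$ can work.

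The curve actually needed must cross the central cylinder lying over $C_G(\tau_1\tau_2)$ twice (this is why the paper assembles $\gamma$ from four arcs, two in the pairs of pants and two of the form $[0,1]\times\{s_j\}$ in that cylinder). Over that cylinder the branch set is the pushed-in $C$-complex $S(\tau_1\tau_2)$, not a product, and one must arrange that it meets $[0,1]\times\{s_j\}\times D^2$ exactly in $[0,1]\times\{s_j\}\times\{x_1,\dots,x_n\}$ for two distinct points $s_1\neq s_2$ on the circle. This is achieved by building $S(\tau_1\tau_2)$ from $S(\tau_1)$ and $S(\tau_2)$ joined by bands away from the tangles --- precisely the content of the phrase ``well-chosen surfaces,'' which your proposal dismisses as vacuous. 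This is the missing idea, and it is the substantive part of the proof.
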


\begin{proof}
Fix~$C$-complexes~$S(\tau_1)$ and~$S(\tau_2)$ for the links~$\widehat{\tau_1}$ and~$\widehat{\tau_2}$, and build a~$C$-complex~$S(\tau_1 \tau_2)$ for~$\widehat{\tau_1 \tau_2}$
by connecting~$S(\tau_1)$ and~$S(\tau_2)$ along disjoint bands far from the tangles. Let us use the same notation for the surfaces obtained by pushing these~$C$-complex inside~$D^4$ 
in such a way that they intersect~$[0,1]\times\lbrace s_1\rbrace\times D^2$ along~$[0,1]\times\lbrace s_1\rbrace\times\lbrace x_1,\dots,x_n\rbrace$, where~$s_1$ is some point on the circle~$S^1$ far from the tangle.

The strategy is to build the curve~$\gamma$ from four intervals~$\gamma_1,\gamma_2,\gamma_3,\gamma_4$ alternatively contained in the pairs of pants and in the central cylinder,
as illustrated in Figure~\ref{fig:M}. For the branch set~$R(\tau_1,\tau_2)$ (resp.~$R(\mathit{id}_c,\mathit{id}_c)$), one can simply pick an interval~$\gamma_1$ (resp.~$\gamma_3$)
in the pair of pants as illustrated in Figure~\ref{fig:pants}.
For the branch set~$S(\tau_1 \tau_2)$, one must find two intervals satisfying the same property as above. Using the way we pushed the~$C$-complex~$S(\tau_1 \tau_2)$ into the~$4$-ball, we can set~$\gamma_2=[0,1] \times \lbrace s_1 \rbrace$. Finally, using the way we built~$S(\tau_1 \tau_2)$ from~$S(\tau_1)$ and~$S(\tau_2)$, there exists a
second point~$s_2 \neq s_1$ such that the~$C$-complex intersects~$[0,1]\times\lbrace s_2 \rbrace \times D^2$ along~$[0,1]\times\lbrace s_2 \rbrace \times \lbrace x_1,\dots,x_n\rbrace$. Set~$\gamma_3=[0,1] \times \lbrace s_2 \rbrace$. Gluing these intervals~$\gamma_1,\dots,\gamma_4$ together produces the required curve~$\gamma$.
\end{proof}

We may now conclude.

\begin{proposition}
\label{prop:vanish}
The~$\omega$-signature of~$M_G(\tau_1,\tau_2)$ vanishes up to an uniformly bounded constant.
\end{proposition}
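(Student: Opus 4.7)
The plan is to cut $M_G(\tau_1,\tau_2)$ along the 3-manifold $N$ lying over $\gamma \times D^2$ (where $\gamma$ is the curve provided by Lemma~\ref{lem: curve}), track the resulting signature defect via the Novikov-Wall theorem, and then show that the cut 4-manifold has uniformly bounded $\omega$-signature.

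Since $\gamma \times D^2$ meets the branch set $T(\tau_1,\tau_2)$ only in the $n$ trivial circles $\gamma \times \{x_1,\ldots,x_n\}$, the preimage $N$ is a $G$-branched cover of the solid torus $\gamma \times D^2$ branched over $n$ parallel circles; in particular, $\partial N$ is a closed surface whose genus depends only on $n$ and $|G|$, independently of $\tau_1,\tau_2$. Applying Theorem~\ref{thm: Wall} to this cut therefore gives
\[
\sigma_\omega(M_G(\tau_1,\tau_2)) \simeq \sigma_\omega(M'),
\]
where $M'$ is the cut 4-manifold and the Novikov-Wall defect, a Maslov index inside the finite-dimensional space $H_1(\partial N)_{\omega}$, is uniformly bounded.

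To bound $\sigma_\omega(M')$, I would iterate the procedure: extend $\gamma_1 := \gamma$ to a complete system of simple closed curves $\gamma_1,\gamma_2,\gamma_3,\gamma_4$ on $\Sigma_2$ whose complement is a disk, each chosen so that $\gamma_i \times D^2$ meets $T(\tau_1,\tau_2)$ only in the trivial circles $\gamma_i \times \{x_1,\ldots,x_n\}$. Successively cutting along the corresponding 3-manifolds reduces $M_G(\tau_1,\tau_2)$ to a $G$-branched cover $\widetilde M$ of $D^2 \times D^2 = D^4$, with total Novikov-Wall defect uniformly bounded. Since $\widetilde M$ is a branched cover of $D^4$, \cite[Theorem 6.1]{CF} (the same tool used in the proof of Proposition~\ref{prop:C4k}) identifies $\sigma_\omega(\widetilde M)$ with the multivariable signature of the boundary colored link $L \subset S^3$. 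Arranging the cuts so that $L$ is a colored unlink yields $\sign_\omega(L) = 0$ and hence $\sigma_\omega(M_G(\tau_1,\tau_2)) \simeq 0$, which together with Propositions~\ref{prop: reduction} and~\ref{prop: signatureM} proves the theorem.

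The main obstacle is the extension of Lemma~\ref{lem: curve} to the full cut system: the four curves $\gamma_i$ must be chosen simultaneously so that each $\gamma_i \times D^2$ meets the branch set only in trivial circles, and the resulting boundary link on $S^3$ must be trivial. This requires a coordinated choice of the $C$-complexes $S(\tau_1),S(\tau_2),S(\tau_1\tau_2)$ together with the product surfaces $R(\tau_1,\tau_2)$ and $R(\mathit{id}_c,\mathit{id}_c)$, refining the construction sketched in the proof of Lemma~\ref{lem: curve}. Once this geometric step is in place, the bound follows mechanically from iterated Novikov-Wall additivity, since the intermediate cut surfaces and their $G$-covers have uniformly bounded topology.
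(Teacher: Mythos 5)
Your first step---cutting $M_G(\tau_1,\tau_2)$ along the lift $X_0=\gamma\times\widehat{D}_c$ of $\gamma\times D^2$ and absorbing the Novikov--Wall correction, a Maslov index in the uniformly bounded space $H_1(\partial X_0)_\omega$---is exactly the paper's first step. The gap is in the iteration. A cut system reducing $\Sigma_2$ to a disk must contain curves that go around the two handles of $\Sigma_2$, and those handles are precisely the cylinders carrying $C_G(\tau_1)$ and $C_G(\tau_2)$ together with the rectangles $J_1,J_2$. Any such curve $\gamma_i$ either is isotopic to the core of one of these cylinders, in which case $\gamma_i\times D^2$ meets the branch set in a copy of $\widehat{\tau_i}$ (or in a slice of the pushed-in $C$-complex $S(\tau_i)$), or it crosses $J_i=I_i\times[0,1]$ transversally to $I_i$, in which case $\gamma_i\times D^2$ contains a copy of the tangle $\tau_i$ itself. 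No ``coordinated choice'' of the surfaces $S(\tau_1),S(\tau_2),S(\tau_1\tau_2)$ can remove the tangles from the branch set over these regions, so the condition that every $\gamma_i\times D^2$ meet $T(\tau_1,\tau_2)$ in the $n$ trivial circles cannot be arranged for a full cut system. Without it, the intermediate cut $3$-manifolds are branched covers whose topology depends on $\tau_1,\tau_2$, the successive Novikov--Wall defects are not uniformly bounded, and the terminal branched cover of $D^4$ is branched along a surface whose boundary link is not an unlink, so the appeal to \cite[Theorem 6.1]{CF} does not yield $0$. The obstacle you flag as ``the main obstacle'' is therefore not a technical refinement but a structural impossibility.

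The paper closes the argument by a bootstrap that avoids any further cutting, and this is the idea you are missing. After the single cut along $\gamma$ one has $M_G(\tau_1,\tau_2)=Q_G(\tau_1)\cup_{X_0}Q_G(\tau_2)$, where each piece depends on only \emph{one} of the two tangles, so $\sigma_\omega(M_G(\tau_1,\tau_2))\simeq\sigma_\omega(Q_G(\tau_1))+\sigma_\omega(Q_G(\tau_2))$. Now specialize Proposition~\ref{prop: signatureM} to the pair $(\tau,\mathit{id}_c)$: the signature defect $\sign_\omega(\widehat{\tau})-\sign_\omega(\widehat{\tau})-\sign_\omega(\widehat{\mathit{id}_c})$ vanishes and $\mathit{Maslov}(\mathcal{F}_\omega(\overline{\tau}),\Delta,\Delta)=0$ by Lemma~\ref{lemma:Maslov}, so $\sigma_\omega(M_G(\tau,\mathit{id}_c))\simeq 0$, that is, $\sigma_\omega(Q_G(\tau))+\sigma_\omega(Q_G(\mathit{id}_c))\simeq 0$ for \emph{every} $\tau$. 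Taking $\tau=\mathit{id}_c$ gives $\sigma_\omega(Q_G(\mathit{id}_c))\simeq 0$, hence $\sigma_\omega(Q_G(\tau))\simeq 0$ for all $\tau$, and the proposition follows. You should replace your iterated-cutting scheme by this self-referential use of the already-established Proposition~\ref{prop: signatureM}.
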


\begin{proof}
Cutting the genus~$2$ surface~$\Sigma_2$ along the curve~$\gamma$ provided by Lemma~\ref{lem: curve} yields a decomposition~$\Sigma_2=\Sigma_1 \cup_\gamma \Sigma_1$,
where~$\Sigma_1$ denotes the genus~$1$ surface with one boundary component. The induced decomposition of~$\Sigma_2 \times D^2$ lifts to 
\[
M_G(\tau_1,\tau_2)=Q_G(\tau_1)\cup_{X_0} Q_G(\tau_2)\,,
\]
with~$X_0=\gamma \times \widehat{D}_c$ thanks to the way we chose~$\gamma$. Applying the Novikov-Wall theorem to this decomposition, we get
\[
\sigma_\omega(M_G(\tau_1,\tau_2))\simeq\sigma_\omega(Q_G(\tau_1))+\sigma_\omega (Q_G(\tau_2))\,.
\]
It remains to show that~$\sigma_\omega(Q_G(\tau))\simeq 0$ for any tangle~$\tau$. Using the second point of Lemma~\ref{lemma:Maslov} together with Proposition~\ref{prop: signatureM}
and the equation displayed above for~$(\tau_1,\tau_2)=(\tau,\mathit{id}_c)$, we have 
\begin{align*}
0&=\sigma_\omega(\widehat{\tau})-\sigma_\omega(\widehat{\tau})-\sigma_\omega(\widehat{\mathit{id}_c})-\mathit{Maslov}(\mathcal{F}_\omega(\overline{\tau}),\Delta,\Delta)\\
	&\simeq \sigma_\omega(M_G(\tau,\mathit{id}_c))\simeq \sigma_\omega(Q_G(\tau))+\sigma_\omega(Q_G(\mathit{id}_c))
\end{align*}
independently of the tangle~$\tau$. Taking~$\tau=\mathit{id}_c$ yields the result.
\end{proof}


\section{The isotropic functor as an evaluation}
\label{sec:rel}

In this section, we outline the construction of the Lagrangian functor defined in~\cite{CT} and relate it to our isotropic functor.
In particular, we will see in Theorem~\ref{thm:relating} below why the isotropic functor is, in some sense, an evaluation of the Lagrangian functor.
Recall that these results were used in Section~\ref{sec:def} to compute several examples, and to prove Corollary~\ref{cor:braid}.

\subsection{The Lagrangian functor}
\label{sub:Lagr}
In all this subsection, we shall assume for simplicity that the maps~$c\colon\lbrace 1,\dots,n\rbrace \rightarrow \lbrace \pm 1,\dots,\pm \mu \rbrace$ are such that~$\ell(c)$
is nowhere vanishing.
Given such a map~$c$, the homomorphism~$H_1(D_c) \rightarrow C_\infty^\mu$,~$e_j \mapsto t_{|c_j|}$
induces a free abelian covering~$\widetilde{D}_c \rightarrow D_c$ whose homology is endowed with a structure of module over~$\Lambda_\mu=\Z[t_1^{\pm 1},\dots,t_\mu^{\pm 1}]$.
Obviously,~$D_c$ retracts by deformation on the wedge of~$n$ circles representing the generators~$e_1,\dots,e_n$ of~$\pi_1(D_c,z)$. Using this fact,
one can easily check that~$H_1(\widetilde{D}_c)$ is a module of rank~$n-1$ which is free over~$\Lambda_\mu$ if~$\mu<3$, and
whose localization with respect to the multiplicative set~$S\subset\Lambda_\mu$ generated by~$t_1-1,\dots,t_\mu-1$ is always free over the localized
ring~$\Lambda_S=\Z[t_1^{\pm 1},\dots,t_\mu^{\pm 1},(t_1-1)^{-1},\dots,(t_\mu-1)^{-1}]$.
 
Let~$\langle \ , \ \rangle \colon H_1(\widetilde{D}_c)\times H_1(\widetilde{D}_c)\rightarrow\Z$ be the skew-symmetric intersection form obtained by lifting the orientation of~$D_c$
to~$\widetilde{D}_c$. As showed in~\cite{CT}, the formula
\[
\xi_c(x,y)=\sum_{g\in C_\infty^\mu}\langle gx,y \rangle g^{-1}
\]
defines a skew-Hermitian~$\Lambda_\mu$-valued pairing on~$H_1(\widetilde{D}_c)$ which is non-degenerate. Therefore, following the terminology of
subsection~\ref{sub:cat},~$(H_1(\widetilde{D}_c),\xi_c)$ is a non-degenerate Hermitian~$\Lambda_\mu$-module (which is free if~$\mu<3$).

As promised in subsection~\ref{sub:functor}, we now compute a couple of examples.

\begin{example}
\label{ex:form12}
In the case~$\mu=1$, if~$\tilde{e}_1,\dots ,\tilde{e}_n$ are the lifts of the loops~$e_1,\dots ,e_n$ starting at some fixed lift~$\tilde{z}$ of the basepoint~$z$,
then a basis of the free~$\Lambda$-module~$H_1(\widetilde{D}_c)$ is given by~$v_i=\tilde{e}_i-\tilde{e}_{i+1}$ for~$i=1, \dots, n-1$. With respect to this basis,
a matrix for the skew-Hermitian intersection form~$\xi_c$ is
\[
\begin{pmatrix}
  \frac{1}{2}(\varepsilon_1+\varepsilon_2)(t-t^{-1})& 1-t^{\varepsilon_2} & 0 & \hdots & 0\\
  t^{-\varepsilon_2}-1 & \frac{1}{2}(\varepsilon_2+\varepsilon_3)(t-t^{-1}) &  & \ddots & \vdots \\
  0 &  & \ddots  &  & 0 \\
  \vdots & \ddots &  & & 1-t^{\varepsilon_n}  \\
  0 & \hdots & 0 &t^{-\varepsilon_n}-1 & \frac{1}{2}(\varepsilon_{n-1}+\varepsilon_n)(t-t^{-1})
 \end{pmatrix}.
\]
An illustration of this computation in the case~$c=(1,1)$ is shown in Figure~\ref{fig:coverings}.
\end{example}

\begin{figure}[tb]
\labellist\small\hair 2.5pt
\pinlabel {$v_1$} at 120 820
\pinlabel {$tv_1$} at 387 820
\pinlabel {$v_1$} at 97 -52
\pinlabel {$t_1v_1$} at 307 -52
\endlabellist
\includegraphics[width=0.6\textwidth]{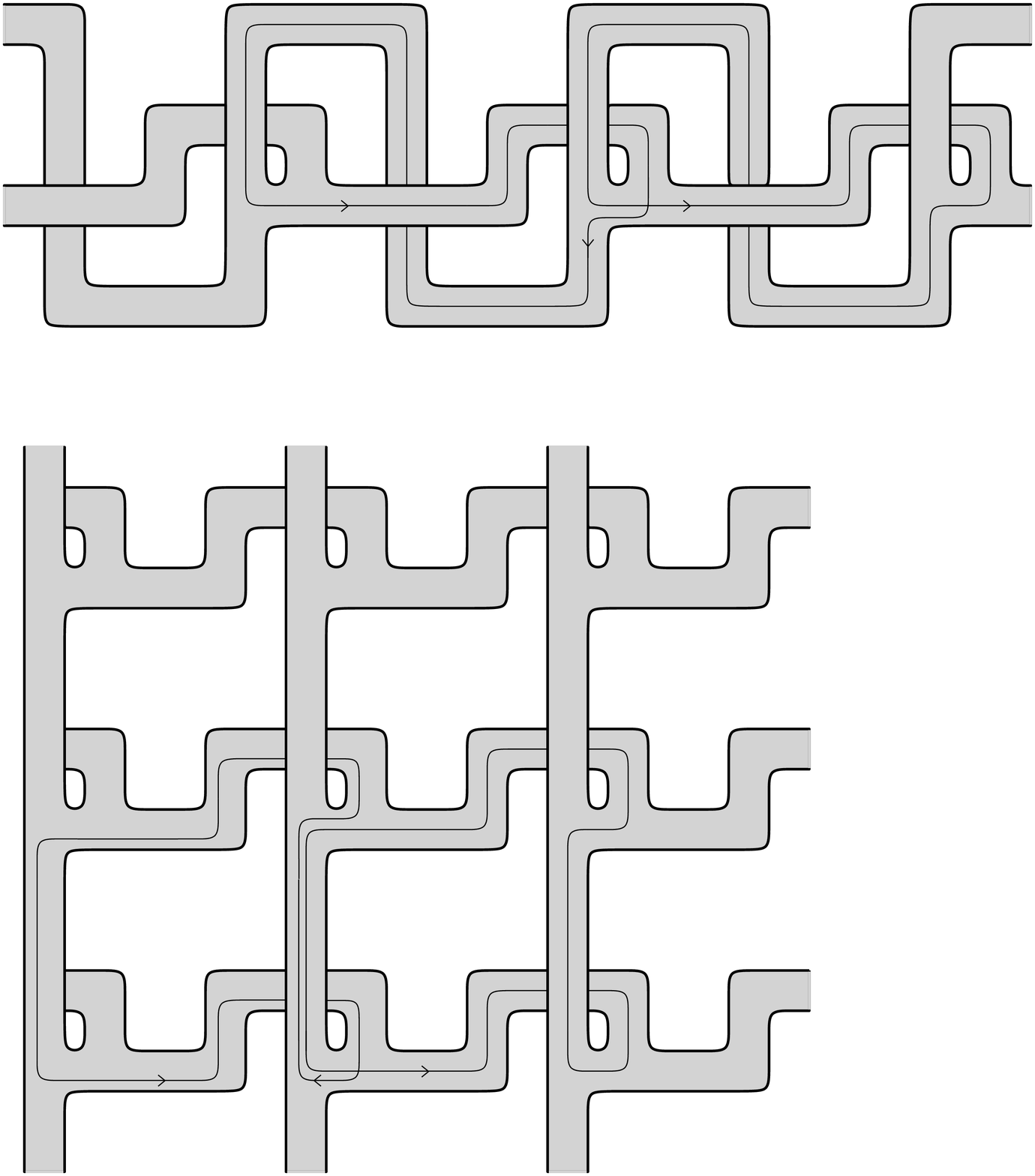}
\caption{From top to bottom: the covering~$\widetilde{D}_c$ for~$c=(1,1)$ and $c=(1,2)$.}
\label{fig:coverings}
\end{figure}

\begin{example}
\label{ex:form2}
Consider the case~$n=\mu=2$ and~$c=(1,2)$. If~$\tilde{e}_1$ and~$\tilde{e}_2$ are lifts of the loops~$e_1$ and~$e_2$ starting at~$\tilde{z}$, then a basis of the
free~$\Lambda_2$-module~$H_1(\widetilde{D}_c)$ is given by~$v=(1-t_2)\tilde{e}_1-(1-t_1)\tilde{e}_2$. With respect to this basis, a matrix for the skew-Hermitian intersection
form~$\xi_c$ is
\[
((t_1-t_1^{-1})+(t_2-t_2^{-1})-(t_1t_2-t_1^{-1}t_2^{-1}))\,.
\]
An illustration of this computation is shown in Figure~\ref{fig:coverings}.
\end{example}

The homomorphism~$H_1(X_\tau)\rightarrow C_\infty^\mu $,~$ m_j \mapsto t_{|c_j|}$ extends the previously defined homomorphisms~$H_1(D_c) \rightarrow C_\infty^\mu$
and~$H_1(D_{c'})\rightarrow C_\infty^\mu$. It determines a free abelian covering~$\widetilde{X}_\tau \rightarrow X_\tau$ whose homology is also endowed with a
structure of module over~$\Lambda_\mu$.

Let~$i_\tau\colon H_1(\widetilde{D}_c) \rightarrow H_1(\widetilde{X}_\tau)$ and~$i_\tau'\colon H_1(\widetilde{D}_{c'}) \rightarrow H_1(\widetilde{X}_\tau)$ be the homomorphisms induced by the inclusions of~$\widetilde{D}_c$ and~$\widetilde{D}_{c'}$ into~$\widetilde{X}_\tau$. Denote
by~$j_\tau\colon H_1(\widetilde{D}_c)\oplus H_1(\widetilde{D}_{c'}) \rightarrow H_1(\widetilde{X}_\tau)$ the homomorphism given by~$j_\tau(x,x')=i_\tau'(x')-i_\tau(x)$. 
Finally consider
\[
\mathit{Ker}(j_\tau) \subset H_1(\widetilde{D}_c) \oplus H_1(\widetilde{D}_{c'})\,.
\]
It is proved in~\cite{CT} that for any tangle~$\tau$, the module~$\overline{\mathit{Ker}(j_\tau)}$ is Lagrangian. It can also be checked
that~$\mathit{Ker}(j_{\tau_1\tau_2})=\mathit{Ker}(j_{\tau_1})\mathit{Ker}(j_{\tau_2})$ for any tangles~$\tau_1, \tau_2$.
This leads to the main result of~\cite{CT}.

\begin{theorem}
\label{thm: CT}
Let~$\mathcal{F}$ assign to each coloring map~$c\colon\lbrace 1,\dots,n \rbrace \rightarrow \lbrace\pm 1,\dots,\pm \mu \rbrace$ the
pair~$(H_1(\widetilde{D}_c),\xi_c)$ and to
each~$(c,c')$-tangle~$\tau$ the submodule~$\overline{\mathit{Ker}(j_\tau)}$ of~$H_1(\widetilde{D}_c)\oplus H_1(\widetilde{D}_{c'})$. Then~$\mathcal{F} $ is a
functor which fits in the commutative diagram
\[
\begin{xy}
(0,15)*+{\mathbf{Braids}_\mu}="a";
(25,15)*+{\mathbf{Tangles}_\mu}="c";
{\ar@{->}^-{} "a";"c"};
(0,0)*+{\mathbf{U}_{\Lambda_\mu}}="f";
(25,0)*+{\mathbf{Lagr}_{\Lambda_\mu},}="h";
{\ar@{->}^-{\Gamma} "f";"h"};
{\ar@{->}^{} "a";"f"};
{\ar@{->}^{\mathcal{F}} "c";"h"};
\end{xy} 
\]
where the horizontal arrows are the embeddings of categories described in Sections~\ref{sub:tangle} and~\ref{sub:cat}.
\end{theorem}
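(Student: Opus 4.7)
The plan is to verify four items: (i) that $(H_1(\widetilde D_c), \xi_c)$ is a non-degenerate Hermitian $\Lambda_\mu$-module, (ii) that $\overline{\mathrm{Ker}(j_\tau)}$ is a Lagrangian submodule of $(-\mathcal{F}(c))\oplus \mathcal{F}(c')$, (iii) functoriality of $\mathcal{F}$, and (iv) that on braids $\mathcal{F}$ produces the graph of a unitary automorphism agreeing with the colored Gassner representation. The first item is standard: skew-Hermitian symmetry of $\xi_c$ follows from $\langle gx,y\rangle = \langle x, g^{-1}y\rangle$ together with the fact that $\langle\cdot,\cdot\rangle$ is skew-symmetric, while non-degeneracy comes from Poincar\'e--Lefschetz duality for the free abelian cover $\widetilde D_c \to D_c$, the key input being that $\ell(c)_i \ne 0$ for every $i$ controls how the boundary of $D_c$ lifts and forces $H_1(\widetilde D_c)$ to be torsion-free after localization.

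For (ii), I would orient $X_\tau$ so that $\partial X_\tau$ extends $(-D_c) \sqcup D_{c'}$ and factor $j_\tau$ as
\[
H_1(\widetilde D_c) \oplus H_1(\widetilde D_{c'}) \xrightarrow{\psi} H_1\bigl((-\widetilde D_c) \sqcup \widetilde D_{c'}\bigr) \xrightarrow{i_*} H_1(\partial \widetilde X_\tau) \xrightarrow{\varphi_*} H_1(\widetilde X_\tau),
\]
with $\psi = (-\mathit{id}) \oplus \mathit{id}$. The image of $\partial_*\colon H_2(\widetilde X_\tau, \partial \widetilde X_\tau) \to H_1(\partial \widetilde X_\tau)$ is precisely $\mathrm{Ker}(\varphi_*)$, and a standard argument with the equivariant intersection pairing shows this kernel is isotropic with respect to $(-\xi_c)\oplus \xi_{c'}$. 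That its saturation is actually Lagrangian is a $\Lambda_\mu$-module analog of the half-lives-half-dies principle, comparing the $\Lambda_\mu$-rank of $\mathrm{Ker}(\varphi_*)$ with half of that of $H_1(\partial \widetilde X_\tau)$ via duality for the pair $(\widetilde X_\tau, \partial \widetilde X_\tau)$. The closure $\overline{\cdot}$ is built into the definition of $\mathcal{F}(\tau)$ precisely because, away from fields, kernels of $\Lambda_\mu$-linear maps need not saturate the relevant submodules; this is the most delicate step of the argument.

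For (iii), the identity law $\mathcal{F}(\mathit{id}_c) = \Delta$ is immediate since $X_{\mathit{id}_c} = D_c \times [0,1]$ lifts to $\widetilde D_c \times [0,1]$, which makes both $i_{\mathit{id}_c}$ and $i'_{\mathit{id}_c}$ the identity and forces $\mathrm{Ker}(j_{\mathit{id}_c}) = \Delta$, already saturated. For composition, one uses the topological splitting $X_{\tau_2\tau_1} = X_{\tau_1} \cup_{D_{c'}} X_{\tau_2}$, which lifts compatibly to $\widetilde X_{\tau_2\tau_1} = \widetilde X_{\tau_1} \cup_{\widetilde D_{c'}} \widetilde X_{\tau_2}$ thanks to our abelian covers being pulled back from the common $C_\infty^\mu$-cover. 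A Mayer--Vietoris diagram chase then yields $\mathrm{Ker}(j_{\tau_2 \tau_1}) = \mathrm{Ker}(j_{\tau_2})\mathrm{Ker}(j_{\tau_1})$ as submodules of $H_1(\widetilde D_c) \oplus H_1(\widetilde D_{c''})$, and the compatibility of $\overline{\cdot}$ with composition of Lagrangian submodules promotes this to the equality $\mathcal{F}(\tau_2 \tau_1) = \mathcal{F}(\tau_2) \circ \mathcal{F}(\tau_1)$ in $\mathbf{Lagr}_{\Lambda_\mu}$.

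For (iv), when $\tau = \alpha$ is a colored braid, $X_\alpha$ deformation-retracts onto $D_c \times \{0\}$ and also onto $D_c \times \{1\}$, so both $i_\alpha$ and $i'_\alpha$ are isomorphisms. Consequently $\mathrm{Ker}(j_\alpha)$ is the graph of $\gamma_\alpha := (i'_\alpha)^{-1} \circ i_\alpha$, hence already saturated, and automatically unitary by (ii). Matching $\gamma_\alpha$ with the colored Gassner representation reduces by functoriality to the standard generators $\sigma_i$, where it can be carried out by an explicit chase on lifted generators, yielding the matrices displayed in Example~\ref{ex:burau}. The main obstacle throughout is step (ii): over the non-principal ring $\Lambda_\mu$ the classical half-lives-half-dies argument has to be handled with care, and the closure operation $\overline{\cdot}$ is essential to obtain a genuinely Lagrangian, and not merely isotropic, relation.
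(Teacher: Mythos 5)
Your outline is correct and follows essentially the same route as the paper, which for this statement simply defers to~\cite{CT}: isotropy of $\mathit{Ker}(j_\tau)$ via the factorization through $H_1(\partial\widetilde{X}_\tau)$ and duality, a half-lives-half-dies rank count over the quotient field of $\Lambda_\mu$ combined with the saturation $\overline{\,\cdot\,}$ to upgrade isotropic to Lagrangian, a Mayer--Vietoris argument for $\mathit{Ker}(j_{\tau_2\tau_1})=\mathit{Ker}(j_{\tau_2})\mathit{Ker}(j_{\tau_1})$, and the deformation retraction of a braid exterior onto either end for the identification with the colored Gassner representation. This is precisely the strategy the paper itself spells out for the finite-cover analogue (Theorem~\ref{thm:isotropicfunctor}), so the two approaches coincide.
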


If~$\alpha\in B_c$ is a colored braid, then~$\mathcal{F}(\alpha)$ is precisely the graph of the unitary automorphism of~$H_1(\widetilde{D}_c)$ given by the colored
Gassner automorphism~$\mathcal{B}_t(\alpha)$. 

\begin{example}
\label{ex:coloredG}
Let us consider the case~$\mu=2$ and~$c=(1,2)$. As we saw in Example~\ref{ex:form2}, a basis of the free~$\Lambda_2$-module~$H_1(\widetilde{D}_c)$ is given
by~$v=(1-t_2)\tilde{e}_1-(1-t_1)\tilde{e}_2$. With respect to this basis, the matrix for the colored Gassner representation is easily seen to
be given by~$\mathcal{B}_{(t_1,t_2)}(A_{12})=(t_1t_2)$.
\end{example}

\subsection{Statement of the results}
\label{sub:rel}

Under some mild assumptions on the colored tangle, we shall show how the Lagrangian functor is related to the isotropic functor. In particular, it will follow that if~$\alpha$ is a braid, then~$\mathcal{F}_\omega(\alpha)$ can be understood by evaluating a matrix of the colored Gassner representation at~$t=\omega$. The proofs of these slightly technical
statements will be given in the next paragraph.

Fix an element~$\omega$ in~$\mathbb{T}^\mu$ and assume that the component~$\omega_i$ of~$\omega$ is of finite order~$k_i>1$. Let~$G$ be the finite abelian
group~$C_{k_1} \times\dots\times C_{k_\mu}$. For~$i=1,\dots,\mu$ let~$t_i$ be a generator of~$C_{k_i}$ and let~$\chi_\omega$ be the character of~$G$
sending the generator~$t_i$ to the root of unity~$\omega_i\in\C\setminus\{1\}$. As usual, we shall simply denote by~$H_\omega$ the associated generalized eigenspaces
(recall subsection~\ref{sub:eval}). Note that~$\chi_\omega$ induces a ring homomorphism~$\Lambda_\mu\to\C$ which endows~$\C$ with the structure of a module
over~$\Lambda_\mu$, that will be emphasized by the notation~$\C_{\omega}$. Note also that since~$\omega_i\neq 1$, this homomorphism factors through the localized
ring~$\Lambda_S=\Z[t_1^{\pm 1},\dots,t_\mu^{\pm 1},(t_1-1)^{-1},\dots,(t_\mu-1)^{-1}]$. As customary, we shall write~$H_S$ for the localization~$H\otimes_{\Lambda_\mu}\Lambda_S$
of a~$\Lambda_\mu$-module~$H$. Note the identity~$H\otimes_{\Lambda_\mu}\C_\omega=H_S\otimes_{\Lambda_S}\C_\omega$.

\begin{proposition}
\label{prop:form}
For any~$c\colon\lbrace 1,\dots,n \rbrace \rightarrow \lbrace \pm 1,\dots,\pm \mu \rbrace$, there is a natural isomorphism of complex vector spaces
\[
\psi_c\colon H_1(\widetilde{D}_c)\otimes_{\Lambda_\mu}\C_{\omega}\longrightarrow H_1(\widehat{D}_c)_{\omega}\,.
\]
Furthermore, if~$\xi_c(t)$ is the matrix for the form of~$\mathcal{F}(c)_S$ with respect to some~$\Lambda_S$-basis~$\{v_j\}_j$
of~$H_1(\widetilde{D}_c)_S$, then the matrix for the form~$\langle \ , \ \rangle_c$ of~$\mathcal{F}_\omega(c)$ with respect to the basis~$\{\psi_c(v_j\otimes 1)\}_j$ of~$H_1(\widehat{D}_c)_{\omega}$ is given by the componentwise evaluation of~$\xi_c(t)$ at~$t=\omega$, up to a positive multiplicative constant. 
\end{proposition}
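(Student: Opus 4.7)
The plan is to factor the branched cover $\widehat{D}_c$ through the free abelian cover $\widetilde{D}_c$ and to trace both modules and Hermitian forms through that factorization. Write $\widehat{D}_c^{\circ}$ for the preimage of $D_c\subset D^2$ in $\widehat{D}_c$, i.e. the unbranched locus. Since $G=C_\infty^\mu/K$ where $K$ is the kernel of $C_\infty^\mu\to G$, we have $\widehat{D}_c^{\circ}=\widetilde{D}_c/K$, so the covering $\widetilde{D}_c\to D_c$ factors as $\widetilde{D}_c\to\widehat{D}_c^{\circ}\to D_c$. Because $D_c$ has the homotopy type of a wedge of circles, the cellular chain complex of $\widetilde{D}_c$ is a two-term complex of free $\Lambda_\mu$-modules, and a direct computation yields a canonical isomorphism of $\C[G]$-modules
\[
H_1(\widehat{D}_c^{\circ};\C)\;\cong\;H_1(\widetilde{D}_c)\otimes_{\Lambda_\mu}\C[G].
\]
Applying $-\otimes_{\C[G]}\C_\omega$ together with Proposition~\ref{prop: eval eingenspace} then produces
\[
H_1(\widetilde{D}_c)\otimes_{\Lambda_\mu}\C_\omega\;\cong\;H_1(\widehat{D}_c^{\circ};\C)\otimes_{\C[G]}\C_\omega\;\cong\;H_1(\widehat{D}_c^{\circ})_\omega.
\]

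Next I will identify $H_1(\widehat{D}_c^{\circ})_\omega$ with $H_1(\widehat{D}_c)_\omega$. The inclusion $\widehat{D}_c^{\circ}\hookrightarrow\widehat{D}_c$ induces a surjection on first homology whose kernel $V$ is generated by small meridians around the branch points. If $\tilde m$ is such a meridian lying above the puncture $x_j$, then $t_{|c_j|}\in G$ fixes the corresponding branch point, so $t_{|c_j|}\cdot[\tilde m]=[\tilde m]$, that is $(t_{|c_j|}-1)[\tilde m]=0$. The element $c_\omega[\tilde m]$ is then simultaneously fixed by $t_{|c_j|}$ and a $\chi_\omega$-eigenvector for $t_{|c_j|}$ with eigenvalue $\omega_{|c_j|}\neq 1$, whence $c_\omega[\tilde m]=0$. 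This shows $c_\omega V=0$, and Proposition~\ref{prop: nondeg}(2) yields $H_1(\widehat{D}_c^{\circ})_\omega=H_1(\widehat{D}_c)_\omega$. Composing the two isomorphisms gives the natural map $\psi_c$.

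For the compatibility of the forms, introduce an auxiliary $\C[G]$-valued skew-Hermitian form $\xi'_c(x,y)=\sum_{g\in G}\langle gx,y\rangle g^{-1}$ on $H_1(\widehat{D}_c^{\circ};\C)$, in the spirit of subsection~\ref{sub:sign2}. Grouping the defining sum for $\xi_c$ by cosets of $K$ in $C_\infty^\mu$, and matching intersection numbers on $\widetilde{D}_c$ with those on the quotient $\widetilde{D}_c/K=\widehat{D}_c^{\circ}$ using representatives with lifts supported in a fundamental domain for the $K$-action, shows that $\xi'_c$ is the image of $\xi_c$ under the ring homomorphism $\Lambda_\mu\to\C[G]$ sending each $t_i$ to its class. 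Corollary~\ref{cor:eval} applied to $\widehat{D}_c^{\circ}$ then says that, in the basis $\{\psi_c(v_j\otimes 1)\}$, a matrix for the intersection form on $H_1(\widehat{D}_c^{\circ})_\omega$ equals the componentwise $\chi_\omega$-evaluation of the $\C[G]$-matrix of $\xi'_c$, up to a positive scalar. Since $\chi_\omega$ composed with $\Lambda_\mu\to\C[G]$ is simply evaluation at $t=\omega$, this matrix is $\xi_c(\omega)$ up to a positive constant. Finally, representatives of any two $1$-cycles in $\widehat{D}_c$ can be chosen away from the branch points, so $i_*\colon H_1(\widehat{D}_c^{\circ})\to H_1(\widehat{D}_c)$ preserves intersection numbers, and the conclusion transfers from $\widehat{D}_c^{\circ}$ to $\widehat{D}_c$.

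The main obstacle I foresee is the identification $\xi'_c=(\Lambda_\mu\to\C[G])(\xi_c)$ in the third paragraph: every other step is a formal consequence of Section~\ref{sub:eval}, but this one requires explicit geometric bookkeeping of lifts of cycles along the tower $\widetilde{D}_c\to\widehat{D}_c^{\circ}\to D_c$, and is the only place in the argument where the geometry of the covers, rather than the algebra of generalized eigenspaces, really enters.
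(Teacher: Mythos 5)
Your overall route is the same as the paper's: factor the branched cover through the unbranched finite cover $\widehat{D}_c^{\circ}=\overline{D}_c=\widetilde{D}_c/K$, pass to the $\omega$-eigenspace, and match the $\Lambda_\mu$-valued form $\xi_c$ with the $\C[G]$-valued form by grouping deck transformations into cosets of $K$. However, the first step is false as written: the ``direct computation'' claiming a canonical isomorphism $H_1(\widehat{D}_c^{\circ};\C)\cong H_1(\widetilde{D}_c)\otimes_{\Lambda_\mu}\C[G]$ ignores the $\mathrm{Tor}_1$ correction that arises when base-changing a two-term free complex along the non-flat map $\Lambda_\mu\to\C[G]$. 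Already for $n=\mu=1$ the once-punctured disk has $H_1(\widetilde{D}_c)=0$, while $\widehat{D}_c^{\circ}$ is an annulus with $H_1=\C$. In general the natural map $H_1(\widetilde{D}_c)\otimes_{\Lambda_\mu}\C[G]\to H_1(\widehat{D}_c^{\circ};\C)$ has cokernel the $n$-dimensional space $V$ spanned by the classes $(1+t_i+\cdots+t_i^{k_i-1})\tilde{e}_j$ (with $i=|c_j|$) --- which are exactly the branch-point meridians you handle in your second paragraph. This is what the paper's Lemma~\ref{lemma: coverings} establishes by Mayer--Vietoris: $H_1(\overline{D}_c)\cong\bigl(H_1(\widetilde{D}_c)\otimes_{\Lambda_\mu}\Z[G]\bigr)\oplus V$. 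The gap is benign only because $\chi_\omega(1+t_i+\cdots+t_i^{k_i-1})=0$ when $\omega_i\neq 1$, so $V_\omega=0$ and the isomorphism you want does hold \emph{after} applying $-\otimes_{\C[G]}\C_\omega$ (or taking $\omega$-eigenspaces); you must insert that argument rather than assert the isomorphism at the level of $\C[G]$-modules.

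With that repair, the rest of your argument is correct and coincides in substance with the paper's proof (Lemmas~\ref{lem: branching} and~\ref{lemma: coverings} followed by Corollary~\ref{cor:eval}). Your coset-grouping identification of $\xi'_c$ with the image of $\xi_c$ under $\Lambda_\mu\to\C[G]$ is indeed the geometric crux; the paper compresses it into the remark that $\psi_c$ is a composition of inclusion-induced maps and hence preserves intersection numbers, so your more explicit bookkeeping is a legitimate (and arguably clearer) way to fill in that step. One last minor point: for $\mu\geq 3$ the module $H_1(\widetilde{D}_c)$ need not be free over $\Lambda_\mu$, so the matrix statement must be made over the localized ring $\Lambda_S$, through which the evaluation $t\mapsto\omega$ factors since each $\omega_i\neq 1$; the paper notes this parenthetically and your proof should too.
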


Next, we introduce some terminology. A free submodule~$N$ of~$H \oplus H'$ is determined by a matrix of the inclusion~$N \subset H \oplus H'$ with respect to a basis of~$N$.
Following~\cite{CT2}, we will say that~$N \subset H \oplus H'$ is {\em encoded} by this matrix. For instance, the graph of a linear map~$\gamma\colon H\to H'$ is encoded
by the matrix~$(I \ \  M_\varphi)^T$, where~$M_\varphi$ is a matrix for~$\varphi$ and~$I$ the identity matrix.

\begin{theorem}
\label{thm:relating}
Assume that~$\omega$ is in~$\mathbb{T}^\mu_P$ and let~$\tau\in T_\mu^\omega(c,c')$ be such that
the~$\Lambda_\mu$-module~$\mathit{Ker}(j_\tau)$ is Lagrangian, and its localization is a free~$\Lambda_S$-module.
Then, with respect to the isomorphisms~$\psi_c$ and~$\psi_{c'}$ of Proposition~\ref{prop:form}, an encoding matrix for the complex vector
space~$\mathcal{F}_\omega(\tau)=\mathit{Ker}(\hat{\jmath}_\tau)$ can be obtained by evaluating an encoding matrix for~$\mathcal{F}(\tau)_S=\mathit{Ker}(j_\tau)_S$ at~$t=\omega$.
\end{theorem}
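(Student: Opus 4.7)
My plan is to produce a commutative square relating the $\Lambda_\mu$-setting of~$\mathcal{F}$ to the $\C$-setting of~$\mathcal{F}_\omega$, then pin down the image of~$\mathit{Ker}(j_\tau)_S \otimes_{\Lambda_S}\C_\omega$ inside~$\mathit{Ker}(\hat{\jmath}_\tau)$ and finally argue equality by a Lagrangian dimension count. Concretely, I would first construct, for the tangle exterior, a natural $\C$-linear map
\[
\phi_\tau\colon H_1(\widetilde{X}_\tau)\otimes_{\Lambda_\mu}\C_\omega \longrightarrow H_1(\widehat{X}_\tau)_\omega
\]
exactly in the spirit of Proposition~\ref{prop:form}, using the fact that the $G$-unbranched portion of~$\widehat{X}_\tau$ is a quotient of~$\widetilde{X}_\tau$ by~$\mathit{Ker}(C_\infty^\mu\to G)$, together with the idempotent~$c_\omega\in\C[G]$ of subsection~\ref{sub:eval} to cut out the generalized eigenspace. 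Naturality with respect to the inclusions~$\widetilde{D}_c,\widetilde{D}_{c'}\hookrightarrow\widetilde{X}_\tau$ and~$\widehat{D}_c,\widehat{D}_{c'}\hookrightarrow\widehat{X}_\tau$ would then give the commutative diagram
\[
\begin{xy}
(0,15)*+{(H_1(\widetilde{D}_c)\oplus H_1(\widetilde{D}_{c'}))\otimes_{\Lambda_\mu}\C_\omega}="a";
(60,15)*+{H_1(\widetilde{X}_\tau)\otimes_{\Lambda_\mu}\C_\omega}="b";
(0,0)*+{H_1(\widehat{D}_c)_\omega\oplus H_1(\widehat{D}_{c'})_\omega}="c";
(60,0)*+{H_1(\widehat{X}_\tau)_\omega.}="d";
{\ar@{->}^-{j_\tau\otimes\mathit{id}} "a";"b"};
{\ar@{->}^-{\hat{\jmath}_\tau} "c";"d"};
{\ar@{->}^-{\psi_c\oplus\psi_{c'}} "a";"c"};
{\ar@{->}^-{\phi_\tau} "b";"d"};
\end{xy}
\]

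Next, I would fix an encoding matrix~$M(t)$ for the free~$\Lambda_S$-module~$\mathit{Ker}(j_\tau)_S$, whose columns~$v_1,\dots,v_r$ form a $\Lambda_S$-basis, and read the matrix~$M(\omega)$ as encoding the subspace spanned by the~$(\psi_c\oplus\psi_{c'})(v_j\otimes 1)$ in~$\hat{H}:=H_1(\widehat{D}_c)_\omega\oplus H_1(\widehat{D}_{c'})_\omega$. Because each~$v_j$ lies in~$\mathit{Ker}(j_\tau)_S$, the commutativity of the diagram above (after localizing at~$S$ and tensoring with~$\C_\omega$, noting that~$\chi_\omega$ factors through~$\Lambda_S$) immediately forces~$\hat{\jmath}_\tau((\psi_c\oplus\psi_{c'})(v_j\otimes 1))=0$, so the column span of~$M(\omega)$ sits inside~$\mathcal{F}_\omega(\tau)=\mathit{Ker}(\hat{\jmath}_\tau)$.

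The reverse inclusion would follow from a dimension count. By Proposition~\ref{prop:FLagr},~$\mathcal{F}_\omega(\tau)$ is a Lagrangian subspace of~$\hat{H}$, hence has complex dimension exactly~$r:=\tfrac{1}{2}\dim_\C\hat{H}$. On the other hand, since~$\mathit{Ker}(j_\tau)_S$ is Lagrangian and free over~$\Lambda_S$, it has rank~$r$ as well, matching the rank of the free~$\Lambda_S$-module~$H_S$ under the isomorphism of Proposition~\ref{prop:form}. I then only need to check that the evaluated columns of~$M(\omega)$ stay linearly independent over~$\C$, i.e.\ that the natural map~$\mathit{Ker}(j_\tau)_S\otimes_{\Lambda_S}\C_\omega\to H_S\otimes_{\Lambda_S}\C_\omega$ is injective. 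For this I would exploit the Lagrangian property: non-degeneracy of the skew-Hermitian form on~$H_S$ together with~$\mathit{Ker}(j_\tau)_S=\mathrm{Ann}(\mathit{Ker}(j_\tau)_S)$ identifies the quotient~$H_S/\mathit{Ker}(j_\tau)_S$ with the dual~$\mathit{Ker}(j_\tau)_S^\ast$, which is free since its predual is; thus the short exact sequence~$0\to\mathit{Ker}(j_\tau)_S\to H_S\to H_S/\mathit{Ker}(j_\tau)_S\to 0$ splits and tensoring preserves injectivity, so~$M(\omega)$ has full column rank~$r$, and the inclusion established above is an equality.

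The main obstacle, I expect, will be the construction of~$\phi_\tau$ and the verification of commutativity of the square, because~$\widehat{X}_\tau$ is a \emph{branched} cover of~$D^2\times[0,1]$ while~$\widetilde{X}_\tau$ is the \emph{unbranched} free abelian cover of~$X_\tau$, so the relation between their homologies is not as transparent as in the~$D_c$ case of Proposition~\ref{prop:form}. A careful treatment, probably via a Mayer--Vietoris argument comparing~$\widehat{X}_\tau$ with the $G$-unbranched cover of~$X_\tau$ and using that~$c_\omega$ kills the meridional cycles around the branch set (thanks to~$\omega_i\neq 1$, i.e.\ working over~$\Lambda_S$), should pin down the required factorization through the generalized~$\omega$-eigenspace. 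The secondary technical point—freeness of~$\mathit{Ker}(j_\tau)_S^\ast$ and the splitting lemma over the non-principal ring~$\Lambda_S$—should be routine once phrased correctly, but deserves care.
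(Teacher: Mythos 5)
Your proposal follows essentially the same route as the paper's. The comparison isomorphism~$\phi_\tau$ for the tangle exterior is the paper's Lemma~\ref{lemma: coverings} (proved, as you anticipate, by a Mayer--Vietoris argument, after first passing from the branched to the unbranched finite cover in Lemma~\ref{lem: branching}, the meridional classes dying because~$\omega_i\neq 1$); the inclusion of the evaluated columns into~$\mathit{Ker}(\hat{\jmath}_\tau)$ comes from naturality of these isomorphisms; and the reverse inclusion is the same Lagrangian dimension count (Lemma~\ref{lemma: Relating}, via Proposition~\ref{prop:FLagr} and the rank-$r$ freeness of~$\mathit{Ker}(j_\tau)_S$).

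The one sub-step whose justification does not work as written is the injectivity of~$\mathit{Ker}(j_\tau)_S\otimes_{\Lambda_S}\C_\omega\to H_S\otimes_{\Lambda_S}\C_\omega$. Writing~$L=\mathit{Ker}(j_\tau)_S$, non-degeneracy of the form together with~$L=\mathrm{Ann}(L)$ gives only an \emph{embedding} of~$H_S/L$ into the (conjugate) dual of~$L$: the adjoint map~$H_S\to L^\ast$ has kernel~$\mathrm{Ann}(L)=L$, but nothing forces it to be onto --- surjectivity is a unimodularity condition, not a non-degeneracy one. So you may conclude that~$H_S/L$ is torsion-free, but over~$\Lambda_S$, which for~$\mu\ge 2$ is far from a principal ideal domain, a finitely generated torsion-free module need not be projective, so the splitting of~$0\to L\to H_S\to H_S/L\to 0$ and the resulting preservation of injectivity under~$-\otimes\C_\omega$ do not follow from what you have said. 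The paper treats this point differently (and only implicitly): in Lemma~\ref{lemma: Relating} it compares the dimension of the \emph{abstract} tensor product~$L\otimes_{\Lambda_S}\C_\omega$, which equals~$r$ by freeness alone, with~$\dim\mathit{Ker}(j_\tau\otimes\mathit{id}_{\C_\omega})=r$. Your instinct that the full column rank of~$M(\omega)$ is the delicate issue is sound, but the duality argument as stated does not establish it; this step needs to be reworked.
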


Recall that a tangle is {\em topologically trivial\/} if its exterior is homeomorphic to the exterior of a trivial braid.
It is easy to check that in such a case, the condition in Theorem~\ref{thm:relating} above is always satisfied (see~\cite[Section 4]{CT2}).
In particular, braids are topologically trivial, so we have the following corollary.

\begin{corollary}
For all~$\alpha \in B_c$ with~$\ell(c)$ nowhere zero and~$\omega\in\mathbb{T}^\mu_c \cap \mathbb{T}^\mu_P$,~$\mathcal{F}_\omega(\alpha)=\Gamma_{\mathcal{B}_\omega(\alpha)}$.
\end{corollary}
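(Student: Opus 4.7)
The plan is to deduce the corollary as a direct application of Theorem~\ref{thm:relating}, using Theorem~\ref{thm: CT} to identify $\mathcal{F}(\alpha)$ with a graph already at the level of the free abelian covering, and then to evaluate.

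First, I would verify that the hypotheses of Theorem~\ref{thm:relating} apply to $\tau = \alpha$. Since $\alpha \in B_c$ is a colored braid, its exterior in $D^2 \times [0,1]$ is homeomorphic to the exterior of the trivial braid $\mathit{id}_c$, so $\alpha$ is topologically trivial. As noted in the remark just below Theorem~\ref{thm:relating} (and shown in~\cite[Section 4]{CT2}), topologically trivial tangles satisfy the required conditions: the $\Lambda_\mu$-module $\mathit{Ker}(j_\alpha)$ is Lagrangian, and its localization $\mathit{Ker}(j_\alpha)_S$ is free over $\Lambda_S$. The standing assumptions $\ell(c)$ nowhere zero and $\omega \in \mathbb{T}^\mu_c \cap \mathbb{T}^\mu_P$ ensure that both $\mathcal{F}(c)_S$ and $\mathcal{F}_\omega(c)$ are non-degenerate Hermitian modules, by Lemma~\ref{lemma: boundary} and the discussion of subsection~\ref{sub:Lagr}.

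Next, by Theorem~\ref{thm: CT} and the statement following it, $\mathcal{F}(\alpha) = \overline{\mathit{Ker}(j_\alpha)}$ is the graph $\Gamma_{\mathcal{B}_t(\alpha)}$ of the colored Gassner automorphism $\mathcal{B}_t(\alpha)$ of $H_1(\widetilde{D}_c)_S$. With respect to the chosen $\Lambda_S$-basis of $H_1(\widetilde{D}_c)_S$, the graph $\Gamma_{\mathcal{B}_t(\alpha)} \subset H_1(\widetilde{D}_c)_S \oplus H_1(\widetilde{D}_c)_S$ is encoded by the block matrix
\[
\begin{pmatrix} I \\ M_{\mathcal{B}_t(\alpha)} \end{pmatrix},
\]
where $M_{\mathcal{B}_t(\alpha)}$ denotes the matrix of $\mathcal{B}_t(\alpha)$ in this basis.

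Now I would apply Theorem~\ref{thm:relating}: under the natural isomorphisms $\psi_c, \psi_{c}$ of Proposition~\ref{prop:form}, the subspace $\mathcal{F}_\omega(\alpha) \subset H_1(\widehat{D}_c)_\omega \oplus H_1(\widehat{D}_c)_\omega$ is encoded by the componentwise evaluation of the matrix above at $t=\omega$, namely
\[
\begin{pmatrix} I \\ \mathcal{B}_\omega(\alpha) \end{pmatrix}.
\]
This matrix encodes precisely the graph $\Gamma_{\mathcal{B}_\omega(\alpha)}$, yielding the desired identification $\mathcal{F}_\omega(\alpha) = \Gamma_{\mathcal{B}_\omega(\alpha)}$.

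The only point that might deserve a careful check is that $\mathcal{B}_\omega(\alpha)$ genuinely defines a unitary automorphism of the Hermitian space $\mathcal{F}_\omega(c)$, so that the image of the evaluation is indeed a bona fide graph of a unitary map (and not merely a submodule). This follows by combining the unitarity of $\mathcal{B}_t(\alpha)$ with respect to $\xi_c(t)$ (Theorem~\ref{thm: CT}) and the fact, recorded in Proposition~\ref{prop:form}, that evaluating $\xi_c(t)$ at $t = \omega$ produces (up to a positive scalar) a matrix of the form on $\mathcal{F}_\omega(c)$; since $\chi_\omega$ is a $\C$-algebra homomorphism intertwining the involutions, unitarity is preserved under evaluation. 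No genuine obstacle is expected beyond this routine bookkeeping.
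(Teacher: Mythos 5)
Your proposal is correct and follows exactly the route the paper intends: braids are topologically trivial, hence satisfy the hypotheses of Theorem~\ref{thm:relating}, and evaluating the encoding matrix $(I \ \ M_{\mathcal{B}_t(\alpha)})^T$ of $\mathcal{F}(\alpha)=\Gamma_{\mathcal{B}_t(\alpha)}$ at $t=\omega$ yields the encoding matrix of $\Gamma_{\mathcal{B}_\omega(\alpha)}$. Your closing remark on the preservation of unitarity under evaluation is a sensible extra check but not a departure from the paper's argument.
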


\subsection{Proofs of Proposition \ref{prop:form} and Theorem \ref{thm:relating}}
\label{sub:proofs}

In order to prove these results, the first step consists in understanding the relation between free abelian coverings and finite branched coverings.
This is the subject of the following two lemmata. For punctured disks~$D_c$ and tangle exteriors~$X_\tau$, we shall denote by~$\overline{D}_c$ and~$\overline{X}_\tau$
the respective unbranched finite abelian coverings. 

\begin{lemma}
\label{lem: branching}
For any~$c$ and~$\tau$ as above, we have natural isomorphisms
\[H_1(\widehat{D}_c)_{\omega} \cong H_1(\overline{D}_c)_{\omega}\quad\text{and}\quad H_1(\widehat{X}_\tau)_{\omega} \cong H_1(\overline{X}_\tau)_{\omega}\,.
\]
\end{lemma}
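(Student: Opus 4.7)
The plan is to present~$\widehat{D}_c$ (respectively~$\widehat{X}_\tau$) as obtained from~$\overline{D}_c$ (respectively~$\overline{X}_\tau$) by gluing in a tubular neighborhood of the branch set, and to show that this gluing is invisible on the~$\omega$-generalized eigenspace. Writing~$Y=D_c$ or~$X_\tau$, the local monodromy around any branch component is cyclic, so the closure~$\widehat{N}$ of~$\widehat{Y}\setminus\overline{Y}$ is, topologically, a disjoint union of closed disks (for~$Y=D_c$) or solid tori (for~$Y=X_\tau$), even where the branched covering map itself fails to be a local homeomorphism. Excision then gives~$H_*(\widehat{Y},\overline{Y};\C)\cong H_*(\widehat{N},\partial\widehat{N};\C)$, and each connected pair~$(D^2,S^1)$ or~$(D^2\times S^1,S^1\times S^1)$ has vanishing relative~$H_1$ and rank-one relative~$H_2$. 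The relative long exact sequence therefore collapses to
\[
0\longrightarrow V\longrightarrow H_1(\overline{Y};\C)\longrightarrow H_1(\widehat{Y};\C)\longrightarrow 0,
\]
where~$V$ is spanned by the classes of the~$1$-cycles lifting a meridian around each branch component.

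Next, I will verify that~$c_{\chi_\omega}V=0$, so that Proposition~\ref{prop: nondeg}(2) yields~$H_1(\widehat{Y};\C)_\omega\cong H_1(\overline{Y};\C)_\omega$. For a meridian~$m$ around a branch component of color~$i$, the defining homomorphism sends~$m$ to~$t_i^{\pm 1}\in G$, an element of order~$k_i$. Hence any connected component~$\tilde{m}$ of the preimage of~$m$ in~$\overline{Y}$, viewed as a~$1$-cycle, has~$G$-stabilizer equal to~$\langle t_i^{\pm 1}\rangle$, and one computes
\[
c_{\chi_\omega}\tilde{m}=\frac{1}{|G|}\sum_{[g]\in G/\langle t_i^{\pm 1}\rangle}\overline{\chi_\omega(g)}\,(g\cdot\tilde{m})\,\sum_{j=0}^{k_i-1}\overline{\omega_i^{\pm j}}\,.
\]
The inner geometric sum vanishes because~$\omega_i$ is a primitive~$k_i$-th root of unity and~$k_i>1$. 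Naturality of the two isomorphisms with respect to the inclusions~$\overline{D}_c\hookrightarrow\overline{X}_\tau$ and~$\widehat{D}_c\hookrightarrow\widehat{X}_\tau$ will then follow from the naturality of both excision and the relative long exact sequence.

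The main obstacle is really the initial topological identification of~$\widehat{N}$. Around each branch component, the branched covering is modelled on the standard cyclic cover~$z\mapsto z^{k_i}$ of~$D^2$ (or its product with~$I$ in the tangle case), which indeed yields a topological disk or solid torus. Assembling these local pieces consistently into a global~$\widehat{N}$, and confirming that the connecting map of the relative long exact sequence sends the fundamental class of each component of~$\widehat{N}$ to the class of a meridian lift as claimed, requires the usual care but no genuinely new ingredient.
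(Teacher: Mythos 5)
Your proof is correct and follows essentially the same route as the paper: both identify the kernel of $H_1(\overline{Y};\C)\to H_1(\widehat{Y};\C)$ as the span of the full preimage cycles of meridians around the branch set (the paper via Mayer--Vietoris, you via excision and the relative long exact sequence, which is the same computation), kill it on the $\omega$-eigenspace using $1+\omega_i+\dots+\omega_i^{k_i-1}=0$, and conclude by Proposition~\ref{prop: nondeg}(2).
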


\begin{proof}
Recall that the first homology group of the punctured disk~$D_c$ is freely generated by the loops~$e_1,\dots,e_n$, where~$e_j$ is a simple loop turning once around the puncture~$x_j$. Let~$\tilde{e}_j$ be a lift of the loop~$e_j$ to~$\overline{D}_c$ for~$j=1,\dots ,n$. By definition, the branched covering~$\widehat{D}_c\rightarrow D^2$ is obtained from the unbranched covering~$\overline{D}_c \rightarrow D_c$ by gluing~$n$ disks to~$D_c$ (in order to recover~$D^2$) and lifting these gluings to the covering.
Applying the Mayer-Vietoris exact sequence to this decomposition of~$\widehat{D}_c$ shows that the inclusion induced homomorphism defines an isomorphism
$ H_1(\overline{D}_c)/V\cong H_1(\widehat{D}_c)$, where~$V$ is the~$\Z[G]$-submodule generated by the loops~$(1+t_i+ \dots +t_i^{k_i-1})\tilde{e}_j$
for~$j=1,\dots,n$ and~$i$ stands for~$c_j$. Since~$\omega_i\neq 1$ is a~$k_i^{\mathit{th}}$-root of unity,~$\chi(1+t_i+ \dots +t_i^{k_i-1})$ vanishes; this implies that~$c_\chi V=0$,
and the conclusion follows from the second point of Proposition~\ref{prop: nondeg}. The case of the tangle exterior can be treated in the same way.
\end{proof}

\begin{lemma}
\label{lemma: coverings}
There are natural isomorphisms
\[
H_1(\widehat{D}_c)_{\omega} \cong  H_1(\widetilde{D}_c) \otimes_{\Lambda_\mu} \C_{\omega}\quad\text{and}\quad
H_1(\widehat{X}_\tau)_{\omega} \cong  H_1(\widetilde{X}_\tau) \otimes_{\Lambda_\mu} \C_{\omega}\,.
\]
\end{lemma}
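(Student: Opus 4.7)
The plan is to reduce to the unbranched finite abelian covers via Lemma~\ref{lem: branching} and then to identify both sides with a common chain-level object.

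For $Y = D_c$ or $X_\tau$, since $\widetilde{Y} \to \overline{Y}$ is a regular cover with deck group $K = \ker(C_\infty^\mu \to G)$, and since $\Lambda_\mu$ is a free $\Z[K]$-module of rank $|G|$ (via coset representatives of $C_\infty^\mu / K = G$), standard covering space theory provides a natural isomorphism of complexes of $\C[G]$-modules
\[
C_*(\overline{Y}; \C) \cong C_*(\widetilde{Y}) \otimes_{\Lambda_\mu} \C[G].
\]
Tensoring over $\C[G]$ with $\C_\omega$, and using that the ring homomorphism $\chi_\omega$ factors as $\Lambda_\mu \twoheadrightarrow \C[G] \to \C$, I obtain
\[
C_*(\overline{Y};\C) \otimes_{\C[G]} \C_\omega \cong C_*(\widetilde{Y}) \otimes_{\Lambda_\mu} \C_\omega.
\]

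I then compute $H_1$ of both sides. On the left, since $G$ is finite abelian and we work over $\C$, the group algebra $\C[G]$ is semisimple, so $-\otimes_{\C[G]} \C_\omega$ is exact and commutes with homology; combined with Proposition~\ref{prop: eval eingenspace} this yields
\[
H_1\bigl(C_*(\overline{Y};\C) \otimes_{\C[G]} \C_\omega\bigr) \cong H_1(\overline{Y}; \C) \otimes_{\C[G]} \C_\omega \cong H_1(\overline{Y}; \C)_\omega.
\]
On the right, I apply the universal coefficient spectral sequence to the complex of free $\Lambda_\mu$-modules $C_*(\widetilde{Y})$:
\[
E^2_{p,q} = \mathrm{Tor}_p^{\Lambda_\mu}\bigl(H_q(\widetilde{Y}), \C_\omega\bigr) \Rightarrow H_{p+q}\bigl(C_*(\widetilde{Y}) \otimes_{\Lambda_\mu} \C_\omega\bigr).
\]
Since the coloring map $H_1(Y) \to C_\infty^\mu$ is surjective (each color in $\lbrace 1, \dots, \mu\rbrace$ occurs by hypothesis), the cover $\widetilde{Y}$ is connected and $H_0(\widetilde{Y}) = \Z$ carries the trivial $\Lambda_\mu$-action. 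Resolving $\Z$ by the Koszul complex $K_*(t_1-1, \dots, t_\mu-1)$ then gives
\[
\mathrm{Tor}_p^{\Lambda_\mu}(\Z, \C_\omega) \cong H_p\bigl(K_*(\omega_1 - 1, \dots, \omega_\mu - 1;\,\C)\bigr) = 0
\]
for all $p$, because each $\omega_i - 1$ is invertible in $\C$ (as $\omega_i$ has order $k_i > 1$). Hence the $q = 0$ column of $E^2$ vanishes and the spectral sequence produces
\[
H_1\bigl(C_*(\widetilde{Y}) \otimes_{\Lambda_\mu} \C_\omega\bigr) \cong H_1(\widetilde{Y}) \otimes_{\Lambda_\mu} \C_\omega.
\]

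Combining these two computations with Lemma~\ref{lem: branching} yields the desired isomorphism, and naturality follows from the chain-level origin of every step. The main delicate point is the vanishing of $\mathrm{Tor}^{\Lambda_\mu}_*(\Z, \C_\omega)$: it rests critically on each component $\omega_i$ being a nontrivial root of unity, and on the connectedness of $\widetilde{Y}$, which in turn depends on the coloring hitting every color. Both hypotheses are built into the setup of~$\mathcal{F}_\omega$.
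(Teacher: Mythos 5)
Your proof is correct, but it takes a genuinely different route from the paper's. The paper argues geometrically: it cuts $D_c$ along arcs joining the punctures to $\partial D^2$ (and cuts $X_\tau$ along a $C$-complex pushed into the cylinder), compares the two Mayer--Vietoris sequences for $\widetilde{D}_c$ and $\overline{D}_c$, and identifies the discrepancy explicitly as a summand $V$ freely generated by the norm elements $(1+t_i+\dots+t_i^{k_i-1})\tilde e_j$, which dies after applying $-\otimes_{\Z[G]}\C_\omega$ because $\chi_\omega(1+t_i+\dots+t_i^{k_i-1})=0$. You instead work at the chain level: the identity $C_*(\overline Y;\C)\cong C_*(\widetilde Y)\otimes_{\Lambda_\mu}\C[G]$ reduces everything to commuting $-\otimes_{\Lambda_\mu}\C_\omega$ past homology, with semisimplicity of $\C[G]$ handling one side and the universal coefficient spectral sequence the other; the only obstruction is $\mathrm{Tor}^{\Lambda_\mu}_*\bigl(H_0(\widetilde Y),\C_\omega\bigr)$, killed by the Koszul resolution since each $\omega_i-1$ is a unit. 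Your argument is uniform in $Y$ (no separate construction for the tangle exterior, no $C$-complex needed), makes naturality essentially automatic, and isolates the exact algebraic reason the lemma holds, namely $\omega_i\neq 1$; the paper's version is more hands-on and exhibits the extra classes concretely, which is in the same spirit as its proof of Lemma~\ref{lem: branching}. One point you rightly flag and should keep: you need $\widetilde Y$ connected so that $H_0(\widetilde Y)=\Z$ with trivial action, which follows from the standing assumption of that section that $\ell(c)$ is nowhere zero (so every color actually occurs and the coloring homomorphism onto $C^\mu_\infty$ is surjective); for the spectral sequence step note also that you use the vanishing of the entire $q=0$ column, not just $\mathrm{Tor}_1$, to kill the $d_2$ differential into $E^2_{0,1}$ --- your Koszul computation does give this.
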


\begin{proof}
Both statements will be proved by using standard cut and paste arguments. Let~$I_1, \dots, I_n$ be disjoint intervals in the disk~$D^2$ such that for~$j=1,\dots,n$
the interval~$I_j$ joins the~$j^\mathit{th}$ puncture~$x_j$ to the boundary~$\partial D^2$, and let~$N_j=I_j\times [-1,1]$ be a bicollar neighborhood of~$I_j$ in~$D^2$.
Set~$N=\bigcup_{j=1}^n (N_j\cap D_c)$,~$Y=D_c\setminus\bigcup_{j=1}^n\mathit{Int}(N_j)$,~$R=N\cap Y$ and let~$\tilde{p}\colon\widetilde{D}_c\to D_c$ be the free abelian covering map. The decomposition~$D_c=N \cup Y$ leads to the Mayer-Vietoris exact sequence of~$\Lambda_\mu$-modules
\[
H_1(\widetilde{R})\to H_1(\widetilde{N})\oplus H_1(\widetilde{Y})\to H_1(\widetilde{D}_c)\to H_0(\widetilde{R})\to H_0(\widetilde{N})\oplus H_0(\widetilde{Y})\,,
\]
where~$\widetilde{R}$,~$\widetilde{N}$ and~$\widetilde{Y}$ stand for~$\tilde{p}^{-1}(R)$,~$\tilde{p}^{-1}(N)$ and~$\tilde{p}^{-1}(Y)$, respectively.
Writing~$\overline{p}\colon \overline{D}_c\to D_c$ for the finite abelian covering map and repeating the same argument yields the Mayer-Vietoris exact sequence of~$\Z[G]$-modules
\[
H_1(\overline{R})\to H_1(\overline{N})\oplus H_1(\overline{Y})\to H_1(\overline{D}_c)\to H_0(\overline{R})\to H_0(\overline{N})\oplus H_0(\overline{Y})\,,
\]
where~$\overline{R}$,~$\overline{N}$ and~$\overline{Y}$ stand for~$\overline{p}^{-1}(R)$,~$\overline{p}^{-1}(N)$ and~$\overline{p}^{-1}(Y)$, respectively. 
In the free abelian case, the map~$H_0(\widetilde{R})\to H_0(\widetilde{N}) \oplus H_0(\widetilde{Y})$ is injective while in the finite abelian case, the kernel~$V$ of the  corresponding map~$H_0(\overline{R})\to H_0(\overline{N})\oplus H_0(\overline{Y})$ is freely generated by the~$n$ loops~$\{(1+t_i+\dots+t_i^{k_i-1})\tilde{e}_j\}_{j=1}^n$,
where~$i$ stands for~$c_j$. It follows that the first homology groups of these two coverings are related by
\[
H_1(\overline{D}_c) \cong \left( H_1(\widetilde{D}_c) \otimes_{\Lambda_\mu} \Z[G] \right) \oplus V\,.
\]
Since~$\omega_i$ is a~$k_i^\mathit{th}$ root of unity different from~$1$, we have~$V_\omega\cong V\otimes_{\Z[G]}\C_\omega=0$.
Therefore, using Lemma~\ref{lem: branching}, Proposition~\ref{prop: eval eingenspace} and the isomorphism displayed above, one obtains
\[
H_1(\widehat{D}_c)_\omega\cong H_1(\overline{D}_c)_\omega\cong H_1(\overline{D}_c)\otimes_{\Z[G]}\C_\omega\cong H_1(\widetilde{D}_c)\otimes_{\Lambda_\mu}\C_\omega\,.
\]

Let us now deal with the tangle exterior. As~$\ell(c)=\ell(c')$, one can always obtain a colored link~$L$ from the tangle~$\tau$ by joining the punctures
with disjoint colored strands contained in the boundary of the cylinder~$D^2\times [0,1]$. By~\cite[Lemma 1]{CimConway}, it is possible to find a~$C$-complex~$S$ for the
colored link~$L$, which can be assumed to be contained in the cylinder.
By~\cite[Section 3]{CF}, it is possible to recover the free abelian covering of the link exterior by cutting it along~$S$. Consequently, if we denote by~$S_1,\dots,S_\mu$ the
components of~$S$, let~$N_i=S_i\times [-1,1]$ be a bicollar neighborhood of~$S_i\subset D^2\times [0,1]$, and
set~$N=\bigcup_{i=1}^\mu (N_i\cap X_\tau)$,~$Y=X_\tau\setminus\bigcup_{i=1}^\mu \text{Int}(N_i)$, and~$R=N \cap Y$, we can then follow the exact same steps as above. 
\end{proof}

Proposition~\ref{prop:form} now follows readily.

\begin{proof}[Proof of Proposition~\ref{prop:form}]
The isomorphism~$\psi_c$ is given by Lemma~\ref{lemma: coverings}. Note that this isomorphism is natural, in the sense that it is given by the composition of several
inclusion induced isomorphisms. In particular, it preserves the intersection numbers, so Proposition~\ref{prop:form} follows from Corollary~\ref{cor:eval}
applied to~$M=\widehat{D}_c$. (One needs to work over the ring~$\Lambda_S$ to ensure that~$H_1(\widetilde{D}_c)_S$
is free, but this is not an issue, as the homomorphism~$\Lambda_\mu\to\C$ mapping~$t_i$ to~$\omega_i\neq 1$ factors through~$\Lambda_S$.)
\end{proof}

The proof of Theorem~\ref{thm:relating} will rely on one last intermediate statement.

\begin{lemma}
\label{lemma: Relating}
Assume that~$\omega$ is in~$\mathbb{T}^\mu_P$ and let~$\tau\in T_\mu^\omega(c,c')$ be such that
the~$\Lambda_\mu$-module~$\mathit{Ker}(j_\tau)$ is Lagrangian, and its localization is a free~$\Lambda_S$-module.
Then, via the isomorphisms of Lemma~\ref{lemma: coverings}, we have
\[
\mathcal{F}_\omega(\tau)=\mathit{Ker}(j_\tau)\otimes_{\Lambda_\mu}\C_{\omega}\,.
\]
\end{lemma}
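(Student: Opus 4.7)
My plan is to construct a natural comparison map
\[
\psi\colon\mathit{Ker}(j_\tau)\otimes_{\Lambda_\mu}\C_{\omega}\longrightarrow\mathit{Ker}(\hat{\jmath}_\tau)=\mathcal{F}_\omega(\tau)
\]
and then prove it is bijective by matching $\C$-dimensions and checking injectivity. Since every $\omega_i\neq 1$, the character $\chi_\omega\colon\Lambda_\mu\to\C$ factors through $\Lambda_S$, and the isomorphism of Lemma~\ref{lemma: coverings} is natural with respect to the inclusions $D_c,D_{c'}\hookrightarrow X_\tau$, as can be seen by running its Mayer-Vietoris proof in parallel on all three spaces. Consequently, $j_\tau\otimes\mathrm{id}_{\C_{\omega}}$ becomes $\hat{\jmath}_\tau$ under these identifications, and tensoring the inclusion $\mathit{Ker}(j_\tau)\hookrightarrow H_1(\widetilde{D}_c)\oplus H_1(\widetilde{D}_{c'})$ with $\C_{\omega}$ defines $\psi$, whose image automatically lies in $\mathit{Ker}(\hat{\jmath}_\tau)$.

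For the dimension count, write $L=\mathit{Ker}(j_\tau)_S$ and $M=(H_1(\widetilde{D}_c)\oplus H_1(\widetilde{D}_{c'}))_S$; both are free $\Lambda_S$-modules by the standing hypothesis and by the discussion in subsection~\ref{sub:Lagr}. Passing to the fraction field of $\Lambda_S$, the non-degenerate form $(-\xi_c)\oplus\xi_{c'}$ forces the Lagrangian submodule $L$ to have rank equal to $\tfrac{1}{2}\operatorname{rank}_{\Lambda_S}M$, so
\[
\dim_\C\bigl(\mathit{Ker}(j_\tau)\otimes_{\Lambda_\mu}\C_{\omega}\bigr)=\operatorname{rank}_{\Lambda_S}L=\tfrac{1}{2}\bigl(\operatorname{rank}_{\Lambda_S}H_1(\widetilde{D}_c)_S+\operatorname{rank}_{\Lambda_S}H_1(\widetilde{D}_{c'})_S\bigr).
\]
By Proposition~\ref{prop:FLagr}, $\mathcal{F}_\omega(\tau)$ is Lagrangian in the non-degenerate Hermitian space $(-\mathcal{F}_\omega(c))\oplus\mathcal{F}_\omega(c')$, so its complex dimension is exactly half of $\dim_\C\mathcal{F}_\omega(c)+\dim_\C\mathcal{F}_\omega(c')$, and these two dimensions coincide with the corresponding $\Lambda_S$-ranks by Proposition~\ref{prop:form}. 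The two halves therefore agree.

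To finish, $\psi$ must be shown to be injective. I plan to deduce this from the fact that $L$ is a direct summand of $M$: the form $(-\xi_c)\oplus\xi_{c'}$ induces an injection $M/L\hookrightarrow\operatorname{Hom}_{\Lambda_S}(L,\Lambda_S)$ of modules of the same $\Lambda_S$-rank (by the previous paragraph), and the saturation identity $L=\overline{L}$ (which holds because both $\mathit{Ker}(j_\tau)$ is Lagrangian by hypothesis and $\overline{\mathit{Ker}(j_\tau)}$ is Lagrangian by Theorem~\ref{thm: CT}, forcing the two to coincide) together with the freeness of $L$ should force $M/L$ itself to be free, so that the short exact sequence $0\to L\to M\to M/L\to 0$ splits and remains exact after $\otimes_{\Lambda_S}\C_{\omega}$. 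An injective $\C$-linear map between finite-dimensional spaces of equal dimension is bijective, which would conclude the proof. I expect the identification of $M/L$ as a free $\Lambda_S$-module -- equivalently, the unimodularity of the induced pairing $L\times M/L\to\Lambda_S$ -- to be the one delicate step; should a direct check resist, a safer route is to exploit the exact sequence of $\C[G]$-modules
\[
0\to\mathit{Ker}(\bar{\jmath}_\tau)\to H_1(\overline{D}_c;\C)\oplus H_1(\overline{D}_{c'};\C)\xrightarrow{\bar{\jmath}_\tau}H_1(\overline{X}_\tau;\C),
\]
whose $\omega$-eigenspace stays exact by Proposition~\ref{prop: nondeg}(1), and to combine this with Lemma~\ref{lem: branching} so as to identify $\mathit{Ker}(\bar{\jmath}_\tau)_\omega$ with $\mathcal{F}_\omega(\tau)$ and bypass the direct-summand question entirely.
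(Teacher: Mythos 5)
Your proof follows the paper's argument essentially verbatim: identify $\mathcal{F}_\omega(\tau)$ with $\mathit{Ker}(j_\tau\otimes\mathit{id}_{\C_{\omega}})$ via the naturality of the isomorphisms of Lemma~\ref{lemma: coverings}, then show both sides have dimension $((n-1)+(n'-1))/2$ by invoking Proposition~\ref{prop:FLagr} (and Lemma~\ref{lemma: boundary}) on the branched-cover side and the Lagrangian-plus-freeness hypotheses on the algebraic side. The injectivity of $\psi$ that you single out as the delicate step is exactly what the paper dismisses as ``the inclusion from right to left is straightforward,'' so your main line is complete to the same standard as the published proof; be aware only that your fallback via the $\C[G]$-exact sequence identifies $\mathit{Ker}(\bar{\jmath}_\tau)_\omega$ with $\mathcal{F}_\omega(\tau)$ but does not by itself remove the need to tensor $\mathit{Ker}(j_\tau)$ out of the free abelian cover, so it does not actually bypass the direct-summand question.
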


\begin{proof}
By definition, the isomorphims of Lemma~\ref{lemma: coverings} allow us to identify~$\mathcal{F}_\omega(\tau)$ with the kernel of the map 
\[
j_\tau \otimes \mathit{id}_{\C_{\omega}}\colon (H_1(\widetilde{D}_c) \oplus H_1(\widetilde{D}_c)) \otimes_{\Lambda_\mu} \C_{\omega} \rightarrow H_1(\widetilde{X}_\tau)\otimes_{\Lambda_\mu} \C_{\omega}\,.
\]
To prove the assertion, it is therefore enough to show the equality
\[
\mathit{Ker}(j_\tau \otimes \mathit{id}_{\C_{\omega}})=\mathit{Ker}(j_\tau) \otimes_{\Lambda_\mu} \C_{\omega}\,.
\]
The inclusion from right to left is straightforward. To get equality, we will argue that both spaces have the same (complex) dimension.

Assume that~$D_c$ and~$D_{c'}$ are punctured~$n$ and~$n'$ times, respectively. Then, we know that~$H_1(\widetilde{D}_c)$ and~$H_1(\widetilde{D}_{c'})$ are~$\Lambda_\mu$-modules of respective rank~$(n-1)$ and~$(n'-1)$. Since~$\omega$ belongs to~$\mathbb{T}^\mu_P$ and~$\tau$ to~$T_\mu^\omega(c,c')$, the subspace~$\mathcal{F}_\omega(\tau)$ is
Lagrangian by Proposition~\ref{prop:FLagr}. As the form on~$H_1(\widehat{D}_{c})_{\omega} \oplus H_1(\widehat{D}_{c'})_{\omega}$ is non-degenerate (Lemma~\ref{lemma: boundary}), the dimension of~$\mathit{Ker}(j_\tau \otimes \mathit{id}_{\C_{\omega}}) \cong \mathcal{F}_\omega(\tau)$ is half that of~$(H_1(\widetilde{D}_c)\otimes_{\Lambda_\mu} \C_{\omega}) \oplus (H_1(\widetilde{D}_c) \otimes_{\Lambda_\mu} \C_{\omega})$, that is,
\[
\mathit{dim}(\mathit{Ker}(j_\tau \otimes \mathit{id}_{\C_{\omega}}))=((n-1)+(n'-1))/2\,.
\]
On the other hand,~$\mathit{Ker}(j_\tau)$ is a Lagrangian submodule of a non-degenerate Hermitian~$\Lambda_\mu$-module, and its localization is free over~$\Lambda_S$;
hence, the dimension of~$\mathit{Ker}(j_\tau)\otimes_{\Lambda_\mu} \C_{\omega}=\mathit{Ker}(j_\tau)_S\otimes_{\Lambda_S}\C_{\omega}$ is also equal to~$((n-1)+(n'-1))/2$.
\end{proof}

We can finally prove Theorem~\ref{thm:relating}.

\begin{proof}[Proof of Theorem~\ref{thm:relating}:]
By standard properties of the tensor product (recall Lemma~\ref{lem: eval}), the componentwise evaluation by~$\chi_\omega$ of a matrix for the
inclusion map~$i$ of~$\mathit{Ker}(j_\tau)_S$ inside~$H_1(\widetilde{D}_c)_S \oplus H_1(\widetilde{D}_{c'})_S$ yields a matrix for
\[
i\otimes \mathit{id}_{\C_\omega}\colon\mathit{Ker}(j_\tau)\otimes_{\Lambda_\mu}\C_{\omega}\to(H_1(\widetilde{D}_c)\oplus H_1(\widetilde{D}_{c'}))\otimes_{\Lambda_\mu}\C_\omega\,.
\]
By Lemma~\ref{lemma: Relating}, this map can be identified with the inclusion of~$\mathcal{F}_\omega(\tau)$
into~$H_1(\widehat{D}_c)_{\omega} \oplus H_1(\widehat{D}_c)_{\omega}$, and the proof is completed.
\end{proof}

\bibliographystyle{plain}

\nocite{*}

\bibliography{bibliographie}

\def\cprime{$'$}
\begin{thebibliography}{10}

\bibitem{Abd}
Mohammad~N. Abdulrahim.
\newblock A faithfulness criterion for the {G}assner representation of the pure
  braid group.
\newblock {\em Proc. Amer. Math. Soc.}, 125(5):1249--1257, 1997.

\bibitem{Bir}
Joan~S. Birman.
\newblock {\em Braids, links, and mapping class groups}.
\newblock Princeton University Press, Princeton, N.J.; University of Tokyo
  Press, Tokyo, 1974.
\newblock Annals of Mathematics Studies, No. 82.

\bibitem{Bur}
Werner Burau.
\newblock \"{U}ber {Z}opfgruppen und gleichsinnig verdrillte {V}erkettungen.
\newblock {\em Abh. Math. Sem. Univ. Hamburg}, 11(1):179--186, 1935.

\bibitem{CimConway}
David Cimasoni.
\newblock A geometric construction of the {C}onway potential function.
\newblock {\em Comment. Math. Helv.}, 79(1):124--146, 2004.

\bibitem{CF}
David Cimasoni and Vincent Florens.
\newblock Generalized {S}eifert surfaces and signatures of colored links.
\newblock {\em Trans. Amer. Math. Soc.}, 360(3):1223--1264 (electronic), 2008.

\bibitem{CT}
David Cimasoni and Vladimir Turaev.
\newblock A {L}agrangian representation of tangles.
\newblock {\em Topology}, 44(4):747--767, 2005.

\bibitem{CT2}
David Cimasoni and Vladimir Turaev.
\newblock A {L}agrangian representation of tangles. {II}.
\newblock {\em Fund. Math.}, 190:11--27, 2006.

\bibitem{Cooper}
D.~Cooper.
\newblock The universal abelian cover of a link.
\newblock In {\em Low-dimensional topology ({B}angor, 1979)}, volume~48 of {\em
  London Math. Soc. Lecture Note Ser.}, pages 51--66. Cambridge Univ. Press,
  Cambridge-New York, 1982.

\bibitem{DFL}
Alex Degtyarev, Vincent Florens, and Ana Lecuona.
\newblock {The signature of a splice}.
\newblock {\em ArXiv e-prints}, 2014.

\bibitem{Flo}
Vincent Florens.
\newblock Signatures of colored links with application to real algebraic
  curves.
\newblock {\em J. Knot Theory Ramifications}, 14(7):883--918, 2005.

\bibitem{GG}
Jean-Marc Gambaudo and {\'E}tienne Ghys.
\newblock Braids and signatures.
\newblock {\em Bull. Soc. Math. France}, 133(4):541--579, 2005.

\bibitem{Ensaios}
\'Etienne Ghys and Andrew Ranicki, editors.
\newblock {\em Six papers on signatures, braids and Seifert surfaces},
  volume~30 of {\em Ensaios Matem\'aticos [Mathematical Surveys]}.
\newblock Sociedade Brasileira de Matem\'atica, Rio de Janeiro, 2016.

\bibitem{Gil}
Patrick~M. Gilmer.
\newblock Configurations of surfaces in {$4$}-manifolds.
\newblock {\em Trans. Amer. Math. Soc.}, 264(2):353--380, 1981.

\bibitem{KLW}
Paul Kirk, Charles Livingston, and Zhenghan Wang.
\newblock The {G}assner representation for string links.
\newblock {\em Commun. Contemp. Math.}, 3(1):87--136, 2001.

\bibitem{LeD}
J.-Y. Le~Dimet.
\newblock Enlacements d'intervalles et repr\'esentation de {G}assner.
\newblock {\em Comment. Math. Helv.}, 67(2):306--315, 1992.

\bibitem{L-L}
Sang~Jin Lee and Eonkyung Lee.
\newblock Potential weaknesses of the commutator key agreement protocol based
  on braid groups.
\newblock In {\em Advances in cryptology---{EUROCRYPT} 2002 ({A}msterdam)},
  volume 2332 of {\em Lecture Notes in Comput. Sci.}, pages 14--28. Springer,
  Berlin, 2002.

\bibitem{Lev}
J.~Levine.
\newblock Knot cobordism groups in codimension two.
\newblock {\em Comment. Math. Helv.}, 44:229--244, 1969.

\bibitem{MeyPhd}
Werner Meyer.
\newblock Die {S}ignatur von lokalen {K}oeffizientensystemen und
  {F}aserb\"undeln.
\newblock {\em Bonn. Math. Schr.}, (53):viii+59, 1972.

\bibitem{Mey}
Werner Meyer.
\newblock Die {S}ignatur von {F}l\"achenb\"undeln.
\newblock {\em Math. Ann.}, 201:239--264, 1973.

\bibitem{Morton}
H.~R. Morton.
\newblock The multivariable {A}lexander polynomial for a closed braid.
\newblock In {\em Low-dimensional topology ({F}unchal, 1998)}, volume 233 of
  {\em Contemp. Math.}, pages 167--172. Amer. Math. Soc., Providence, RI, 1999.

\bibitem{Mur}
Kunio Murasugi.
\newblock On a certain numerical invariant of link types.
\newblock {\em Trans. Amer. Math. Soc.}, 117:387--422, 1965.

\bibitem{Py}
Pierre Py.
\newblock Indice de {M}aslov et th\'eor\`eme de {N}ovikov-{W}all.
\newblock {\em Bol. Soc. Mat. Mexicana (3)}, 11(2):303--331, 2005.

\bibitem{Squier}
Craig~C. Squier.
\newblock The {B}urau representation is unitary.
\newblock {\em Proc. Amer. Math. Soc.}, 90(2):199--202, 1984.

\bibitem{Tri}
A.~G. Tristram.
\newblock Some cobordism invariants for links.
\newblock {\em Proc. Cambridge Philos. Soc.}, 66:251--264, 1969.

\bibitem{Turaev}
Vladimir~G. Turaev.
\newblock {\em Quantum invariants of knots and 3-manifolds}, volume~18 of {\em
  de Gruyter Studies in Mathematics}.
\newblock Walter de Gruyter \& Co., Berlin, revised edition, 2010.

\bibitem{CTC}
C.~T.~C. Wall.
\newblock Non-additivity of the signature.
\newblock {\em Invent. Math.}, 7:269--274, 1969.

\end{thebibliography}

\end{document}